\DeclareMathOperator{\B}{Boy}
\DeclareMathOperator{\tors}{-tors}
\DeclareMathOperator{\Mod}{-Mod}
\DeclareMathOperator{\R}{R}
\theoremstyle{plain}
\newtheorem*{theorem*}{Theorem}
\newtheorem{thm}{Theorem}[section]
\newtheorem{cor}[thm]{Corollary}
\newtheorem{lem}[thm]{Lemma}
\newtheorem{prop}[thm]{Proposition}
\theoremstyle{definition}
\newtheorem{dfn}[thm]{Definition}
\newtheorem{obs}[thm]{Remark}
\newtheorem{ej}[thm]{Example}
\title[Boolean perspectives of idioms and the Boyle derivative]{Boolean perspectives of idioms and the Boyle derivative}
\begin{document}
\author{Jaime Castro P\'erez,\\ Mauricio Medina B\'arcenas,\\  Jos\'e R\'ios Montes,\\ Angel Zald\'ivar}

\address{Escuela de Ingenier\'ia y Ciencias, Instituto Tecnol\'ologico y de Estudios Superiores de Monterrey, Calle del Puente 222, Tlalpan, 14380, M\'exico D.F., M\'exico.}
\email{jcastrop@itesm.mx}

\address{Department of Mathematics, Chungnam National University, Yuseong-gu, Daejeon 34134, Republic of Korea.}

\email{mmedina@cnu.ac.kr}

\address{Instituto de Matem\'aticas, Universidad Nacional, Aut\'onoma de M\'exico, Area de la Investigaci\'on Cient\'ifica, Circuito Exterior, C.U., 04510, M\'exico D.F., M\'exico.}
\email{jrios@matem.unam.mx}
\email{Corresponding author: zaldivar@matem.unam.mx}

\thanks{This work was supported by the grant UNAM-DGAPA-PAPIIT IN10057.}
\subjclass[2010]{Primary 06Cxx, Secondary 16S90}
\keywords{Complete Boolean Algebras, lattices, frames, modules, rings}

\maketitle

\begin{abstract}
We are concerned with the boolean or more general with the complemented properties of idioms (complete upper-continuous modular lattices). In \cite{simmonscantor} the author introduces a device which captures in some informal speaking \emph{ how far } the idiom is from be complemented, this device is the Cantor-Bendixson derivative. There exists another device that captures some boolean properties, the so-called Boyle-derivative, this derivative is an operator on the assembly (the frame of nuclei) of the idiom. The Boyle-derivative has its origins in module theory. In this investigation we produce an idiomatic analysis of the boolean properties of any idiom using the Boyle-derivative, we give conditions on a nucleus $j$ such that $[j, tp]$ is a complete boolean algebra. We also explore some properties of nuclei $j$ such that $A_{j}$ is a complemented idiom.
\end{abstract}

\section{Introduction}\label{intro}

Frames (locales, complete Heyting algebras) as algebraic analogues of topological spaces, emerge naturally in many situations. For example consider any associative ring with unit $R$ and the category of left $R$-modules, $R\Mod$. It is known that a localization of $R\Mod$ is given by a hereditary torsion class $\mathcal{T}$, that is, a class of modules closed under isomorphism, quotients, subobjects, extensions and arbitrary coproducts. All these localizations or in a more amenable way all this classes are organized as a complete lattice that results to be a frame. This frame is called $R\tors$. For years it has been seen that a decent analysis of the categorical behaviour of $R\Mod$ can be done via $R\tors$ (see \cite{golan1986torsion}).

In many other algebraic-like-situations frames appear as a manifestation of a localization process, as in a topos $\mathcal{E}$ the localizations are exactly the Lawvere-Tierney topologies on the subobject classifier $\Omega_{\mathcal{E}}$, and it is known that the former of all Lawvere-Tierney topologies constitute a frame.

The external notion of a Lawvere-Tierney topology is a \emph{nucleus} on a frame $A$, that is, a function, $j\colon A\rightarrow A$ such that 
\begin{itemize}
\item[1.] $a\leq b\Rightarrow j(a)\leq j(b)$.
\item[2.] $a\leq j(a)$.
\item[3.] $j(a\wedge b)=j(a)\wedge j(b)$.
\item[4.] $j^{2}=j$.
\end{itemize}

Denote by $N(A)$ the set of all nuclei on a frame $A$. An important result states that $N(A)$ is a frame, thus many properties of $A$ are captured by the frame of all its nuclei. In fact as frame $N(A)$ has its own frame of nuclei $N^{2}(A)$ and so on. Doing this through the ordinals we obtain the assembly tower of $A$. The idea of this tower is to control the boolean behaviour of $A$ (\cite{wilson1994assembly}), in the sense that $A$ is a complete boolean algebra if and only if $N(A)\cong A$ (Theorem 3.9 of  \cite{simmons2006assembly}). 
This extreme case also occurs in the module theoretic realm:

\[R\tors\text{ is boolean if and only if } R \text{ is a seminartian ring }\]\[ \text{ if and only if } R\tors\cong N(R\tors)\] 

These extreme cases show us that having a complete boolean algebra much of the theory is simplified or become trivial. Nevertheless, a frame $A$ is not boolean in general. We can subtract a Boolean part of $A$, this procedure is measured by the \emph{Cantor-Bendixson derivative}. The author in \cite{simmons2014cantor} and \cite{simmonshigherex} introduces the different interactions of the Cantor-Bendixson derivatives on every level in the assembly tower and the boolean consequences in $A$.
In \cite{simmonscantor} the author observes that the Cantor-Bendixson analysis can be done for more general structures called \emph{idioms}, that is, upper-continuous modular lattices. Every frame is a distributive idiom, thus idioms are a generalization of frames.

The archetypal example of an idiom is the lattice of submodules of any module, in particular the lattice of left(right) ideals of a ring. Thus the Cantor-Bendixson process can be done for this kind of lattices and in fact one can consider the frame of nuclei on a general idiom $A$. Then the Cantor-Bendixson derivate of this frame is related in somehow with the Cantor-Bendixson derivative of $A$, to see this interaction, the author in \cite{simmonsgabriel} introduces the idiomatic counterpart of the \emph{Boyle dimension on module categories} (see \cite{boyle1978large}, \cite{castro2007some}). For any idiom $A$, the Boyle derivative exists at the level of $N(A)$, that is, $\EuScript{B}oy\colon N(A)\rightarrow N(A)$ as an inflator with some extra properties.

The principal idea of this manuscript is to perform a \emph{Boyle}-analysis for idioms in the same path as in \cite{simmons2014cantor} and \cite{simmonshigherex}. We obtain conditions on an idiom $A$ to determine when $N(A)$ is a complete boolean algebra, in fact we will do this in a more general way, we will give conditions on a nucleus $j$ to have $Boy$-dimension and the frame $[j,tp]\cong N(A_{j})$ to be a complete boolean algebra.

Let us briefly describe the organization of this manuscript, Section\ref{intro} is this Introduction and Section \ref{pre} summarizes the required material for the rest of the investigation.

Section \ref{BOYID} can be understood as the idiom facet of the Boyle derivative in module categories, we will give equivalent conditions on a nucleus $j$ to have Boyle-dimension (\ref{COH}). In section \ref{BOYDIMID} we compare this dimension with the Gabriel-dimension for idioms and we generalize some results of \cite{simmonscantor}.
The last section is the idiom counterpart of the theory of \emph{spectral Grothendieck categories}, we will use several facts of the previous sections to give generalizations of the module theory realm into the idiomatic view.

\section{Preliminaries}\label{pre}

In this section we recollect the necessary material for the development of all the investigation, in particular some facts about the Boyle derivative.

We recall some of the idiom theory that we will need, first let us begin with an  example:

Given a module $M\in R\Mod$, denote by $\Lambda(M)$ the set of all submodules of $M$. It is clear that $\Lambda(M)$ constitutes a complete lattice where suprema are not unions, moreover the following distributive laws hold: 
\[N\cap (\sum X)=\sum\{N\cap K\mid K\in X\}\] 
for any $N\in\Lambda(M)$ and $X\subseteq\Lambda(M)$ directed; and
\[K\leq N\Rightarrow (K+ L)\cap N=K+(L\cap N)\] 
for any $L\in\Lambda(M)$. Thus the lattice of any module is an upper-continuous and modular lattice. This is the idea behind \emph{idioms}:

\begin{dfn}\label{Id} 
An \emph{idiom} $(A,\leq,\bigvee,\wedge, 1, 0)$ is a complete, upper-continuous, modular lattice, that is, $A$ is a complete lattice that satisfies the following distributive laws: 
\[a\wedge (\bigvee X)=\bigvee\{a\wedge x\mid x\in X\}\leqno({\rm IDL})\]
holds for all $a\in A$  and $X\subseteq A$ directed; and 
\[a\leq b\Rightarrow (a\vee c)\wedge b=a\vee(c\wedge b)\leqno({\rm ML})\]
for all $a,b,c\in A$.
These are the Idiom distributive law and the modular law respectively.
\end{dfn} 

A good account of the many uses of these lattices can be found in \cite{simmonsintroduction}. A remarkable class of idioms are the distributive ones:

\begin{dfn}\label{Id1}
A \emph{frame} $(A, \leq, \bigvee, \wedge, 1, 0)$ is a complete lattice that satisfies
\[a\wedge (\bigvee X)=\bigvee\{a\wedge x\mid x\in X\}\leqno({\rm FDL})\]
for all $a\in A$ and $X\subseteq A$ any subset.
\end{dfn} 

That is, a distributive idiom is exactly a frame. 

Frames are the algebraic version of a topological space. Indeed, if $S$ is a topological space then its topology, $\mathcal{O}(S)$ is a frame.  

There exists an important characterization of frames in terms an \emph{implication}. Recall that in any lattice $A$, an \emph{implication} in $A$ is an operation $(\_\succ \_)$ given by \[x\leq (a\succ b)\Leftrightarrow x\wedge a\leq b\], for all $a,b\in A$.  When the lattice $A$ has an implication then $A$ is a dsitributive lattice, in the context of complete lattices we have the following: 

\begin{prop}\label{03}
A complete lattice $A$ is a frame if and only if $A$ has an implication.
\end{prop}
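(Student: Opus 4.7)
The plan is to prove both directions by recognizing the defining condition $x\leq(a\succ b)\Leftrightarrow x\wedge a\leq b$ as an adjunction between the order-preserving map $(\_)\wedge a\colon A\to A$ and the would-be map $(a\succ\_)$. The frame distributive law (FDL) is precisely the statement that $(\_)\wedge a$ preserves arbitrary suprema, which by the adjoint functor theorem for posets is equivalent to the existence of a right adjoint, namely $(a\succ\_)$.

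For the easy direction, suppose $A$ has an implication. I would show the nontrivial inequality
\[a\wedge\bigvee X\leq\bigvee\{a\wedge x\mid x\in X\}\]
for arbitrary $X\subseteq A$. Setting $b:=\bigvee\{a\wedge x\mid x\in X\}$, one has $a\wedge x\leq b$ for every $x\in X$, hence $x\leq(a\succ b)$ by the defining property of $\succ$. Taking the supremum yields $\bigvee X\leq(a\succ b)$, and applying the defining property in the reverse direction gives $a\wedge\bigvee X\leq b$, as required. The reverse inequality is immediate from the lattice operations.

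For the converse, I would define explicitly
\[(a\succ b):=\bigvee\{x\in A\mid x\wedge a\leq b\},\]
which is legitimate because $A$ is complete. The forward direction of the adjunction is straightforward: if $x\wedge a\leq b$ then $x$ lies in the set over which we take the supremum, so $x\leq(a\succ b)$. The backward direction is where (FDL) is used: if $x\leq(a\succ b)$, then
\[x\wedge a\leq(a\succ b)\wedge a=\bigvee\{y\wedge a\mid y\wedge a\leq b\}\leq b,\]
where the middle equality is precisely the frame distributive law applied to the indexed family $\{y\mid y\wedge a\leq b\}$.

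I do not expect any real obstacle; the only subtlety worth flagging is that (FDL) must be used in full generality (arbitrary $X$), which is why the weaker (IDL) of a general idiom is not enough to produce an implication, and this is exactly why only distributive idioms (frames) admit one.
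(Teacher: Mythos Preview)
Your argument is correct and is the standard adjoint-functor-for-posets proof of this equivalence. The paper does not supply its own proof of this proposition; it simply refers the reader to Lemma~1.7 of Simmons' \emph{The basics of frame theory} and Theorem~I.4.2 of Johnstone's \emph{Stone Spaces}, where precisely the argument you outline appears. So there is nothing to compare: your plan \emph{is} the expected proof, and your closing remark about why (IDL) alone does not suffice is a helpful gloss that the paper omits.
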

For a proof, see Lemma 1.7 of \cite{simmons2006basics} or Theorem I4.2 of \cite{johnstone1986stone}.
We require some point-free techniques. 
\begin{dfn}\label{Id2}\begin{itemize}
\item[(1)] An \emph{inflator} on an idiom $A$ is a function $d\colon A\rightarrow A$ such that $x\leq d(x)$ and  $x\leq y \Rightarrow d(x)\leq d(y)$ for all $x,y\in A.$
\item[(2)] A \emph{pre-nucleus} $d$ on $A$ is an inflator such that $d(x\wedge y)=d(x)\wedge d(y)$ for all $x,y\in A.$ 
\item[(3)] A \emph{stable} inflator on $A$ is an inflator such that $d(x)\wedge y\leq d(x\wedge y)$ for all $x,y\in A.$
\item[(4)] A \emph{closure operator} is an idempotent inflator $c$ on $A$, that is,  is an inflator such that $c^{2}=c$.
\item[(5)] A \emph{nucleus} on $A$ is an idempotent pre-nucleus.
\end{itemize}
\end{dfn}   
Let  $I(A)$ denote the set of all inflators on $A$, $P(A)$ the set of all prenuclei, and $S(A)$ the set of all stable inflators. Clearly, $P(A)\subseteq S(A)\subseteq I(A)$. Let $C(A)$ the set of all closure operators in $A$.  Let  $N(A)$ be the set of all nuclei on $A$. All these sets are partially ordered by $d\leq f\Leftrightarrow d(a)\leq f(a)$ for all $a\in A$. Note that the identity function $id_{A}$ and the constant function $tp(a)=1$ for all $a\in A$ (where $1$ is the top of $A$) are inflators. These two inflators are the bottom and the top in all these partially ordered sets.  

Given an inflator $d\in I(A)$ we can construct a closure operator as follows:  $d^{0}:=id_{A}$,  $d^{\alpha+1}:=d\circ d^{\alpha}$ for a non-limit ordinal $\alpha$, and $d^{\lambda}:=\bigvee\{d^{\alpha}\mid \alpha<\lambda\}$ for a limit ordinal $\lambda$. These are inflators, ordered in a chain 
\[d\leq d^{2}\leq d^{3}\leq\ldots \leq d^{\alpha}\leq\ldots.\] 
By a cardinality argument, there exists an ordinal $\gamma$ such that $d^{\alpha}=d^{\gamma}$, for all $\alpha\geq \gamma$. In fact, we can choose $\gamma$ the least of these ordinals, say $\infty$. Thus, $d^{\infty}$ is an inflator such that $d\leq d^{\infty}$, but more important this inflator satisfies $d^{\infty}d^{\infty}=d^{\infty}$, that is, $d^{\infty}$ is a  closure operator on $A$. 
Also this construction gives a way to obtain nuclei on an idiom $A$.

\begin{thm}\label{Id3}
Let $A$ be an idiom then:\begin{itemize}
\item[(1)] For every stable inflator $s$ on $A$, the closure $s^{\infty}$ is a nucleus.
\item[(2)] In particular for every prenuclei $p$ on $A$, the closure $p^{\infty}$ is a nucleus.
\end{itemize}
\end{thm}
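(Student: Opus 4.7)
My plan is to derive (2) from (1) immediately, then attack (1) by showing stability is preserved under the transfinite iteration, and finally show that a stable closure operator is automatically a nucleus.

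For the reduction, I would observe that every pre-nucleus $p$ is stable: from $y \leq p(y)$ we get $p(x) \wedge y \leq p(x) \wedge p(y) = p(x \wedge y)$, so the pre-nucleus axiom implies stability. Hence (2) follows at once from (1), and all the work is in (1).

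For (1) I would first establish, by transfinite induction on $\alpha$, that every iterate $s^{\alpha}$ is a stable inflator.
\begin{itemize}
\item[(i)] At $\alpha = 0$, $s^{0} = id_{A}$ is trivially stable.
\item[(ii)] At a successor $\alpha+1$, using stability of $s$ on the outside and stability of $s^{\alpha}$ on the inside (together with monotonicity of $s$),
\[
s^{\alpha+1}(x)\wedge y = s(s^{\alpha}(x))\wedge y \leq s(s^{\alpha}(x)\wedge y) \leq s(s^{\alpha}(x\wedge y)) = s^{\alpha+1}(x\wedge y).
\]
\item[(iii)] At a limit $\lambda$, the family $\{s^{\alpha}(x) \mid \alpha<\lambda\}$ is directed (in fact a chain) by monotonicity of the iteration, so the idiom distributive law (IDL) gives
\[
s^{\lambda}(x)\wedge y = \Bigl(\bigvee_{\alpha<\lambda} s^{\alpha}(x)\Bigr)\wedge y = \bigvee_{\alpha<\lambda}\bigl(s^{\alpha}(x)\wedge y\bigr) \leq \bigvee_{\alpha<\lambda} s^{\alpha}(x\wedge y) = s^{\lambda}(x\wedge y).
\]
\end{itemize}
Thus $s^{\infty}$ is a stable closure operator.

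It remains to upgrade \emph{stable + idempotent} to \emph{pre-nucleus}. Monotonicity gives the easy direction $s^{\infty}(x\wedge y)\leq s^{\infty}(x)\wedge s^{\infty}(y)$. For the reverse, apply stability of $s^{\infty}$ twice and then idempotence:
\[
s^{\infty}(x)\wedge s^{\infty}(y) \leq s^{\infty}\bigl(x\wedge s^{\infty}(y)\bigr) = s^{\infty}\bigl(s^{\infty}(y)\wedge x\bigr) \leq s^{\infty}\bigl(s^{\infty}(y\wedge x)\bigr) = s^{\infty}(x\wedge y).
\]
Combining the two inequalities yields the meet-preservation property, so $s^{\infty}$ is an idempotent pre-nucleus, i.e.\ a nucleus.

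The only subtle step is the limit case of the induction: one must make sure that stability can really be pushed through the supremum, and this is where the upper-continuity of the idiom (IDL) is essential, since the chain $\{s^{\alpha}(x)\}_{\alpha<\lambda}$ is directed. Everything else is a routine chase with the definitions.
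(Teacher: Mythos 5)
Your proof is correct and complete. The paper itself states Theorem~\ref{Id3} without a proof (it is treated as a known fact about the closure construction, implicitly deferring to the cited Simmons references), so there is no in-paper argument to compare against; your argument is the standard one and it works. The reduction is sound: a pre-nucleus $p$ satisfies $p(x)\wedge y\leq p(x)\wedge p(y)=p(x\wedge y)$, hence is stable. The transfinite induction is handled carefully: the successor step nests the two stabilities correctly, and at limit stages you correctly observe that $\{s^{\alpha}(x)\mid\alpha<\lambda\}$ is an increasing chain (hence directed), which is precisely what licenses the application of (IDL); this is indeed where the idiom hypothesis is used and the argument would not survive in an arbitrary complete lattice. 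The final upgrade from stable closure operator to nucleus, by applying stability twice and then idempotence to obtain $s^{\infty}(x)\wedge s^{\infty}(y)\leq s^{\infty}(x\wedge y)$, combined with the trivial reverse inequality from monotonicity, is exactly the right chase. No gaps.
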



The following Theorem is one of the most important results in idiom theory.

\begin{thm}
\label{0}
For any idiom $A$, the complete lattice of all nuclei $N(A)$ on $A$ is a frame.
\end{thm}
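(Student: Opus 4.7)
The plan is to verify that $N(A)$ is a complete lattice with a workable description of joins, and then establish the frame distributive law.

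First I would show that meets in $N(A)$ are computed pointwise. Given $\{j_i\}_{i\in I}\subseteq N(A)$, set $j(a):=\bigwedge_i j_i(a)$. Monotonicity and the inflator property $a\le j(a)$ are immediate. For preservation of binary meets,
\[
j(a\wedge b)=\bigwedge_i\bigl(j_i(a)\wedge j_i(b)\bigr)=\Bigl(\bigwedge_i j_i(a)\Bigr)\wedge\Bigl(\bigwedge_i j_i(b)\Bigr)=j(a)\wedge j(b),
\]
using only that each $j_i$ preserves binary meets plus a standard complete-lattice identity. Idempotency is also easy: since $j\le j_i$ for every $i$, one gets $j(j(a))\le j_i(j(a))\le j_i(j_i(a))=j_i(a)$ for each $i$, hence $j(j(a))\le\bigwedge_i j_i(a)=j(a)$. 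Together with the observation that the constant nucleus $tp$ is the top of $N(A)$, this already shows $N(A)$ is a complete lattice, with joins given abstractly as meets of upper bounds.

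Next I would give a more useful description of joins via Theorem~\ref{Id3}. Given $\{k_i\}_{i\in I}\subseteq N(A)$, define the pointwise inflator $d(a):=\bigvee_i k_i(a)$. Although $d$ need not be a nucleus, I would check that $d$ is at least stable: from $b\le k_i(b)$ and the meet-preservation of each $k_i$ one deduces $k_i(a)\wedge b\le k_i(a)\wedge k_i(b)=k_i(a\wedge b)\le d(a\wedge b)$; taking the join over $i$ of the directed family of finite sups and invoking IDL gives $d(a)\wedge b\le d(a\wedge b)$. Theorem~\ref{Id3} then promotes $d^{\infty}$ to a nucleus, and this must be the least nucleus above every $k_i$, i.e.\ the join $\bigvee_i k_i$ in $N(A)$.

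The last and hardest step is the frame distributive law
\[
j\wedge\bigvee_i k_i=\bigvee_i(j\wedge k_i)\qquad (j,k_i\in N(A)).
\]
The inequality $\ge$ is immediate from monotonicity. For the nontrivial direction I would either (a) characterize a nucleus by its fixed-point set $A_j=\{a:j(a)=a\}$, observing that joins of nuclei correspond to intersections of fixed-point sets, and use IDL in $A$ to transfer distributivity up to $N(A)$; or (b) invoke Proposition~\ref{03} by constructing an implication on $N(A)$ directly, namely $(k\succ j)(a):=\bigvee\{\,i(a)\mid i\in N(A),\ i\wedge k\le j\,\}$, which requires showing that the set of nuclei $i$ with $i\wedge k\le j$ is closed under the join built in the previous paragraph.

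The main obstacle is exactly this last step: in a general (non-distributive) idiom the pointwise join of nuclei is merely a stable inflator, so all the real content of FDL is hidden inside the transfinite iteration $d\mapsto d^{\infty}$ used to produce joins. Pushing FDL through that iteration forces one to use IDL at every successor and limit stage, which is precisely where the upper-continuity hypothesis built into the definition of an idiom becomes indispensable.
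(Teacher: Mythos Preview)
The paper does not prove this theorem; it merely cites \cite{simmmons1989near} and \cite{simmonsintroduction}. So there is no in-paper argument to compare against, but your outline does contain a genuine gap worth flagging.

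The weak point is the claim that the pointwise supremum $d(a)=\bigvee_i k_i(a)$ of a family of nuclei is a \emph{stable} inflator. Your argument correctly gives $k_i(a)\wedge b\le d(a\wedge b)$ for each individual $i$, and then appeals to IDL via ``the directed family of finite sups''. But IDL only yields
\[
d(a)\wedge b=\bigvee_{F\ \text{finite}}\Bigl(\bigl(\textstyle\bigvee_{i\in F}k_i(a)\bigr)\wedge b\Bigr),
\]
so you still need $(\bigvee_{i\in F}k_i(a))\wedge b\le d(a\wedge b)$ for each finite $F$. Passing from the singleton case to the finite case is precisely a finite-distributivity step, and modularity does \emph{not} supply it. In a non-distributive idiom the pointwise join of nuclei need not be stable, so Theorem~\ref{Id3} cannot be invoked as written.

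The standard repair (and this is how Simmons argues in the cited references) is to replace the naive pointwise supremum by the family of all \emph{finite composites} $k_{i_1}\circ\cdots\circ k_{i_n}$. Composites of prenuclei are prenuclei, this family is genuinely directed, and a directed pointwise supremum of prenuclei is again a prenucleus (here IDL applies legitimately). Its idempotent closure, produced by Theorem~\ref{Id3}(2), is then the join $\bigvee_i k_i$ in $N(A)$, and with this description of joins one can verify FDL, or equivalently construct the implication and invoke Proposition~\ref{03}. The paper itself later gives an independent route: $N(A)\cong\EuScript{D}(A)$ by Theorem~\ref{00}, and $\EuScript{D}(A)$ is the quotient of the base frame $\EuScript{B}(A)$ by the nucleus $\EuScript{D}vs$, hence a frame.
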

A proof of this important fact can be found in \cite{simmmons1989near}, \cite{simmonsintroduction} (Lemma 2.4) (sometimes people call $N(A)$ the \emph{assembly} of $A$).

Any nucleus $j\in N(A)$ gives a quotient of $A$, the set $A_{j}$ of elements fixed by $j$. Even more, $A_{j}$ is an idiom, and thus many properties of $A$ are reflected in $A_{j}$ via the surjective idiom morphism (that is a monotone function that preserves-${\bigvee, \wedge, 0, 1}$ ) $j^{*}\colon A\rightarrow A_{j}$ given by $j^{*}(a)=j(a)$ for all $a\in A$. 

Now what we need is to introduce other construction of $N(A)$, to do this we need the \emph{base frame} of $A$, consider $a,b\in A$ with $a\leq b$, the \emph{interval} $[a,b]$ is the set $[a,b]=\{x\in A\mid a\leq x \leq b\}$. Denote by $\EuScript{I}(A)$ the set of all intervals of $A$. Given two intervals $I, J$, we say that $I$ is a \emph{subinterval} of $J$, if $I\subseteq J$, that is, if $I=[a,b]$ and $J=[a',b']$ with $a'\leq a\leq b\leq b'$ in $A$. We say that $J$ and $I$ are \emph{similar}, denoted by  $J\sim I$, if there are $l,r\in A$ with associated intervals \[L=[l,l\vee r]\quad [l\wedge r,r]=R\] where $J=L$ and $I=R$ or $J=R$ and $I=L$. Clearly, this a reflexive and symmetric relation. Moreover, if $A$ is modular, this relation is just the canonical lattice isomorphism between $L$ and $R$. 
\begin{dfn}\label{base}
With the same notation as above:
\begin{itemize}

\item[(1)] We say that a set of intervals $\mathcal{A}\subseteq {\EuScript I}(A)$ is \emph{abstract} if is not empty and it is closed under $\sim$, that is, 
\[J\sim I\in\mathcal{A}\Rightarrow J\in\mathcal{A}.\] 
\item[(2)] An abstract set $\mathcal{B}$ is a \emph{basic} set if it is closed under subintervals, that is, 
\[J\subseteq I\in\mathcal{B}\Rightarrow J\in\mathcal{B}.\]  
\item[(3)] A set of intervals $\mathcal{C}$ is a \emph{congruence} set if it is basic and closed under abutting intervals, that is, 
\[[a,b][b,c]\in \mathcal{C}\Rightarrow [a,c]\in\mathcal{C}\]
for elements $a,b,c\in A$. 
\item[(4)] A basic set of intervals $\mathcal{B}$ is a \emph{pre-division} set if \[\forall x\in X\left[[a,x]\in\mathcal{B}\Rightarrow [a,\bigvee X]\in\mathcal{B}\right]\] for each $a\in A$ and $X\subseteq [a,1]$. 
\item[(5)] A set of intervals $\mathcal{D}$ is a \emph{division} set if it is a congruence set and a pre-division set.
\end{itemize}
\end{dfn}

Denote $\EuScript{D}(A)\subseteq\EuScript{C}(A)\subseteq\EuScript{B}(A)\subseteq\EuScript{A}(A)$ the set of all division, congruence, basic and abstract intervals in $A$, respectively. These gadgets can be understood like certain classes of modules in a module category $R\Mod$, that is, classes closed under isomorphism, subobjects, extensions and coproducts. From this point of view $\EuScript{C}(A)$ and $\EuScript{D}(A)$ are the idiom analogues of the Serre classes and the torsion (localizations) classes in module categories.
It is not hard to see that $\EuScript{B}(A)$ is a frame, called the \emph{base} frame of the idiom $A$. The top of this frame is $\EuScript{I}(A)$ and the bottom is the set of all trivial intervals of $A$, denoted by $\EuScript{O}(A)$. Also, the family $\EuScript{C}(A)$ is a frame and a proof of this fact can be found in \cite{simmonscantor}. 

For any $\mathcal{B}\in\EuScript{B}(A)$ we can describe the least division set that contains it, denoted by $\EuScript{D}vs(\mathcal{B})$. In \cite{simmonscantor} it is proved that $\EuScript{D}vs(\_)$ is a nucleus on $\EuScript{B}(A)$ and the quotient of this nucleus is $\EuScript{D}(A)$. In fact, there is a way to connect this frame to the frame $N(A)$: 

\begin{dfn}\label{nuclei}
Fro each $a\in A$ and $\mathcal{B}\in\EuScript{B}(A)$ let $\mid\mathcal{B}\mid\colon A\rightarrow A$ be the function given by \[\mid\mathcal{B}\mid(a)=\bigvee X\]  where $x\in X\Leftrightarrow [a,x]\in\mathcal{B}$.
\end{dfn}

This produces the \emph{associated inflator} of $\mathcal{B}$. Moreover, if the basic set $\mathcal{B}$ is a congruence set, then $\mid\mathcal{B}\mid$ is a pre-nucleus on $A$, and if it is a division set, then $\mid\mathcal{B}\mid$ is a nucleus. In this way, we have for every division set  a nucleus. Now, given a nucleus $j$ we can construct a division set $\mathcal{D}_j$ as follows,
\[[a,b]\in\mathcal{D}_{j}\Leftrightarrow j(a)=j(b).\]
These correspondences are bijections and they define an isomorphism between $\EuScript{D}(A)$ and $N(A)$. With this we have:

\begin{thm}\label{00}
If  $A$ is an idiom, then there is an isomorphism of frames 
\[N(A)\longleftrightarrow \EuScript{D}(A)\qquad\qquad j\longleftrightarrow \mathcal{D}\] 
given by 
\[j\longmapsto \mathcal{D}_{j}\quad [a,b]\in\mathcal{D}_{j}\Longleftrightarrow b\leq j(a)\qquad\textrm{and}\qquad 
\mathcal{D}\longmapsto\quad j=\mid\mathcal{D}\mid.\]
\end{thm}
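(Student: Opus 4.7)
The plan is to verify that the two prescribed maps are well-defined, that they are mutually inverse, and that both directions are order-preserving; since $N(A)$ and $\EuScript{D}(A)$ are complete lattices (in fact frames), an order isomorphism between them automatically preserves all joins and meets, hence is a frame isomorphism.

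First I would show that for each nucleus $j$ the set $\mathcal{D}_{j}=\{[a,b]\mid b\leq j(a)\}$ really is a division set. Closure under subintervals is immediate from monotonicity: if $a\leq a'\leq b'\leq b\leq j(a)\leq j(a')$ then $b'\leq j(a')$. The congruence property follows from idempotence: if $b\leq j(a)$ and $c\leq j(b)$, then $c\leq j(j(a))=j(a)$. Pre-division is also direct, because $[a,x]\in\mathcal{D}_{j}$ for every $x\in X$ means $x\leq j(a)$ for every $x$, so $\bigvee X\leq j(a)$. The delicate step is closure under similarity: one needs $[l,l\vee r]\in\mathcal{D}_{j}\Leftrightarrow [l\wedge r,r]\in\mathcal{D}_{j}$, and this is the one place where the meet-preservation axiom $j(x\wedge y)=j(x)\wedge j(y)$ is essential. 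If $l\vee r\leq j(l)$, then $r\leq j(l)$, so $j(r)\leq j(l)$ and hence $j(l\wedge r)=j(l)\wedge j(r)=j(r)\geq r$. Conversely, $r\leq j(l\wedge r)=j(l)\wedge j(r)\leq j(l)$ combined with $l\leq j(l)$ yields $l\vee r\leq j(l)$.

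Next I would verify the two compositions. That $|\mathcal{D}|$ is a nucleus when $\mathcal{D}$ is a division set is already stated in the text preceding the theorem. The computation $|\mathcal{D}_{j}|(a)=\bigvee\{x\mid x\leq j(a)\}=j(a)$ is immediate. For the other composite, writing $X=\{x\mid [a,x]\in\mathcal{D}\}$, the condition $[a,b]\in\mathcal{D}_{|\mathcal{D}|}$ unfolds to $b\leq \bigvee X$. By the pre-division property $[a,\bigvee X]\in\mathcal{D}$, and closure under subintervals then gives $[a,b]\in\mathcal{D}$. Conversely if $[a,b]\in\mathcal{D}$, then $b\in X$ so $b\leq|\mathcal{D}|(a)$. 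Hence $\mathcal{D}_{|\mathcal{D}|}=\mathcal{D}$, and the two assignments are mutually inverse.

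Finally I would check order-preservation in both directions. If $j\leq k$ and $[a,b]\in\mathcal{D}_{j}$, then $b\leq j(a)\leq k(a)$, so $\mathcal{D}_{j}\subseteq\mathcal{D}_{k}$. Conversely, from $\mathcal{D}_{j}\subseteq\mathcal{D}_{k}$, applying the inclusion to the interval $[a,j(a)]\in\mathcal{D}_{j}$ gives $j(a)\leq k(a)$ for every $a\in A$, hence $j\leq k$. The only serious obstacle I anticipate is the similarity step, since it is the only spot where one really uses that a nucleus preserves binary meets rather than just being a closure operator, and one must carefully invoke the modular-lattice identification between $[l,l\vee r]$ and $[l\wedge r,r]$; everything else is a sequence of routine verifications once that point is settled.
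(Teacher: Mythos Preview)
The paper does not supply its own proof of this theorem: it is stated in the preliminaries as a known result, with the surrounding remarks pointing the reader to \cite{simmonscantor} and \cite{simmonsgabriel} for details. So there is no in-paper argument to compare against.

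Your proposal is correct and is essentially the standard verification one would expect from those references. All four division-set axioms for $\mathcal{D}_{j}$ are checked properly; in particular your treatment of closure under similarity is right, and you correctly isolate it as the one step that genuinely needs the meet-preservation axiom $j(x\wedge y)=j(x)\wedge j(y)$ rather than mere idempotence and monotonicity. The two composites are computed correctly: $|\mathcal{D}_{j}|(a)=\bigvee\{x\mid a\leq x\leq j(a)\}=j(a)$, and for the other direction the pre-division clause is exactly what forces $[a,\bigvee X]\in\mathcal{D}$, after which subinterval closure gives the desired inclusion. Order preservation in both directions is straightforward, and your observation that an order isomorphism between complete lattices is automatically a frame isomorphism is valid, so nothing further is needed for the frame-morphism claim. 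In short, you have filled in precisely the details the paper elects to omit.
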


The $\EuScript{D}vs$-construction can be described in a useful way: 

\begin{thm}\label{000}
For every $\mathcal{B}\in\EuScript{B}(A)$ 
\[[a,b]\in\EuScript{D}vs(\mathcal{B})\Longleftrightarrow (\forall a\leq x< b)(\exists x<y\leq b)[[x,y]\in\mathcal{B}].\]
\end{thm}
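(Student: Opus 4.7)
Plan: Set $\mathcal{C} := \{[a,b] \in \EuScript{I}(A) \mid (\forall a \leq x < b)(\exists x < y \leq b)\,[x,y] \in \mathcal{B}\}$; the task is to show $\mathcal{C} = \EuScript{D}vs(\mathcal{B})$.

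For the inclusion $\mathcal{C} \subseteq \EuScript{D}vs(\mathcal{B})$, it suffices to show that any $[a,b] \in \mathcal{C}$ lies in every division set $\mathcal{D} \supseteq \mathcal{B}$. Put $S := \{c \in [a,b] \mid [a,c] \in \mathcal{D}\}$; since $\mathcal{D}$ is pre-division, $s := \bigvee S$ lies in $S$. If $s < b$ the defining property of $\mathcal{C}$ furnishes some $y \in (s,b]$ with $[s,y] \in \mathcal{B} \subseteq \mathcal{D}$, and abutting $[a,s]$ with $[s,y]$ inside the congruence set $\mathcal{D}$ produces $[a,y] \in \mathcal{D}$, contradicting $y > s$. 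Hence $s = b$, so $[a,b] \in \mathcal{D}$ and, intersecting over all such $\mathcal{D}$, $[a,b] \in \EuScript{D}vs(\mathcal{B})$.

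For the reverse inclusion I introduce the auxiliary class $\mathcal{F}$ of intervals $[a,b]$ admitting a well-ordered $\mathcal{B}$-filtration $a = c_0 \leq c_1 \leq \cdots \leq c_\alpha = b$ (with $c_\lambda = \bigvee_{\beta<\lambda} c_\beta$ at limits) whose successor factors $[c_\beta,c_{\beta+1}]$ all lie in $\mathcal{B}$. The inclusion $\mathcal{F} \subseteq \mathcal{C}$ (which is what I really need) goes as follows: given such a filtration and $x \in [a,b)$, the least ordinal $\delta$ with $c_\delta \not\leq x$ is necessarily a successor $\gamma+1$ (a limit would force $c_\delta = \bigvee_{\beta<\delta} c_\beta \leq x$), and the modular similarity $[x,\, x \vee c_{\gamma+1}] \sim [x \wedge c_{\gamma+1},\, c_{\gamma+1}] \subseteq [c_\gamma, c_{\gamma+1}] \in \mathcal{B}$ produces the witness $y = x \vee c_{\gamma+1}$.

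It remains to verify that $\mathcal{F}$ is a division set containing $\mathcal{B}$; minimality of $\EuScript{D}vs(\mathcal{B})$ then gives $\EuScript{D}vs(\mathcal{B}) \subseteq \mathcal{F} \subseteq \mathcal{C}$, completing the argument. Containment is the trivial chain $c_0 = a, c_1 = b$; abutting is concatenation of filtrations; similarity transports the chain along $\phi\colon [l, l \vee r] \cong [l \wedge r, r]$, each new factor being similar to the old via the modular identity $c_\beta \vee (c_{\beta+1} \wedge r) = c_{\beta+1}$. The substantive step is closure under subintervals $[a',b'] \subseteq [a,b]$: the projected chain $c'_\beta := (c_\beta \wedge b') \vee a'$ is a filtration of $[a',b']$ (upper-continuity/IDL handles $c'_\lambda = \bigvee_{\beta<\lambda} c'_\beta$ at limits), and modularity exhibits each $[c'_\beta, c'_{\beta+1}]$ as similar to a subinterval of $[c_\beta, c_{\beta+1}] \in \mathcal{B}$ via the identities $c_\beta \vee (c_{\beta+1} \wedge b') = (c_\beta \vee b') \wedge c_{\beta+1}$ and, for the $a'$-shift, $((c_\beta \wedge b') \vee a') \wedge c_{\beta+1} = (c_\beta \wedge b') \vee (a' \wedge c_{\beta+1})$. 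Pre-division then follows by well-ordering $X$ and a transfinite induction that alternates abutting with the subinterval and similarity closures just established; this subinterval step is the main obstacle, where the non-distributivity of $A$ forces systematic use of similarity rather than direct containment to place each projected factor in $\mathcal{B}$.
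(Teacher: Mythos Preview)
The paper does not prove this theorem; immediately after stating it the authors write ``The details can be found in \cite{simmonscantor} and \cite{simmonsgabriel}.'' So there is no in-paper argument to compare against, and your attempt must be judged on its own.

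Your proof is correct. The forward inclusion $\mathcal{C}\subseteq\EuScript{D}vs(\mathcal{B})$ via the supremum $s=\bigvee\{c\in[a,b]\mid[a,c]\in\mathcal{D}\}$ is the standard one and is fine. For the reverse inclusion your detour through the filtration class $\mathcal{F}$ is a good choice: a direct verification that $\mathcal{C}$ itself is a division set runs into real trouble at subintervals, since the witness $y$ produced for $x$ inside $[a,b]$ need not land in a given $[a',b']\subseteq[a,b]$, and $y\wedge b'$ can collapse to $x$ in a non-distributive idiom. Your check that each projected factor $[c'_\beta,c'_{\beta+1}]$ is similar to a subinterval of $[c_\beta,c_{\beta+1}]$ is right; note that the second modular identity you quote is exactly what is needed once one observes $a'\leq b'$ gives $((c_\beta\wedge b')\vee a')\wedge(c_{\beta+1}\wedge b')=((c_\beta\wedge b')\vee a')\wedge c_{\beta+1}$.

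The only place that deserves one more line is the pre-division step, which you leave as a sketch. Concretely: well-order $X=(x_\gamma)_{\gamma<\kappa}$, set $z_0=a$, $z_{\gamma+1}=z_\gamma\vee x_\gamma$, $z_\lambda=\bigvee_{\gamma<\lambda}z_\gamma$; then $[z_\gamma,z_{\gamma+1}]\sim[z_\gamma\wedge x_\gamma,x_\gamma]\subseteq[a,x_\gamma]\in\mathcal{F}$ by the subinterval and similarity closures you just established, and the ordinal-sum concatenation of these filtrations (inserting $z_\lambda$ at limit stages, which is legitimate by IDL) gives the required $\mathcal{B}$-filtration of $[a,\bigvee X]$. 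With that paragraph added the argument is complete.
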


The details can be found in \cite{simmonscantor} and \cite{simmonsgabriel}. This result shows that $\EuScript{D}(A)$ is a frame thus it has an implication, the following gives a description of it (for a proof see \cite{simmonscantor} Lemma 4.6).

\begin{lem}\label{div}
Let $A$ be an idiom then \[I\in(\EuScript{A}\succ\EuScript{B})\Leftrightarrow (\forall J\subseteq I)[J\in\EuScript{A}\Rightarrow J\in\EuScript{B}]\] for any $\EuScript{A}\in\EuScript{B}(A)$ and $\EuScript{B}\in\EuScript{D}(A)$.
\end{lem}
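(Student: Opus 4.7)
My plan is to work with the concrete description of implication in the frame $\EuScript{B}(A)$ and then transfer to $\EuScript{D}(A)$ via the standard fact that for any nucleus $j$ on a frame $F$ and any $j$-fixed $y\in F$, the Heyting implication $x\succ y$ computed in $F$ already lies in $F_{j}$ and coincides with the implication computed in $F_{j}$ (indeed $j(x\succ y)\wedge x\leq j(x\succ y)\wedge j(x)=j((x\succ y)\wedge x)\leq j(y)=y$ forces $j(x\succ y)\leq x\succ y$). Applied to $\EuScript{D}vs$ on $\EuScript{B}(A)$ with $y=\EuScript{B}$, this reduces the claim to a statement about the implication in the basic-set frame, which is simply $\bigcup\{\EuScript{C}\in\EuScript{B}(A)\mid\EuScript{C}\cap\EuScript{A}\subseteq\EuScript{B}\}$.

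For the forward direction, $(\EuScript{A}\succ\EuScript{B})$ is itself a basic set, hence closed under subintervals; so $J\subseteq I\in(\EuScript{A}\succ\EuScript{B})$ forces $J\in(\EuScript{A}\succ\EuScript{B})$, and the defining inequality $(\EuScript{A}\succ\EuScript{B})\cap\EuScript{A}\subseteq\EuScript{B}$ then gives $J\in\EuScript{A}\Rightarrow J\in\EuScript{B}$. For the reverse direction I would exhibit a concrete witness: set
\[
\EuScript{C}:=\{K\in\EuScript{I}(A)\mid\forall J\subseteq K\;(J\in\EuScript{A}\Rightarrow J\in\EuScript{B})\}.
\]
By hypothesis $I\in\EuScript{C}$ and taking $J=K$ shows $\EuScript{C}\cap\EuScript{A}\subseteq\EuScript{B}$, so once we know $\EuScript{C}$ is basic we get $\EuScript{C}\leq(\EuScript{A}\succ\EuScript{B})$ in $\EuScript{B}(A)$ and hence $I\in(\EuScript{A}\succ\EuScript{B})$.

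Closure of $\EuScript{C}$ under subintervals is immediate from transitivity of $\subseteq$; the main obstacle, and the only step that uses the idiom/modular structure, is closure under $\sim$. Given $K\sim K'$ with $K\in\EuScript{C}$, write (up to swapping) $K=[l,l\vee r]$ and $K'=[l\wedge r,r]$, pick an arbitrary $J'=[a,b]\subseteq K'$, and set $\bar J=[a\vee l,b\vee l]\subseteq K$. Using $a\leq b$ together with $l\wedge b\leq l\wedge r\leq a$, the modular law yields $(a\vee l)\wedge b=a\vee(l\wedge b)=a$, so taking $l'=a\vee l$ and $r'=b$ gives $[l',l'\vee r']=\bar J$ and $[l'\wedge r',r']=J'$, i.e.\ $\bar J\sim J'$. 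Because $\EuScript{A}$ and $\EuScript{B}$ are basic and hence closed under $\sim$, the chain $J'\in\EuScript{A}\Rightarrow\bar J\in\EuScript{A}\Rightarrow\bar J\in\EuScript{B}\Rightarrow J'\in\EuScript{B}$ follows from $K\in\EuScript{C}$, giving $K'\in\EuScript{C}$; the converse $K'\in\EuScript{C}\Rightarrow K\in\EuScript{C}$ is symmetric, which completes the verification that $\EuScript{C}$ is basic and hence the proof.
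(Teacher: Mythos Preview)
Your argument is correct. Note, however, that the paper does not supply its own proof of this lemma: it simply refers the reader to Lemma~4.6 of \cite{simmonscantor}. So there is no in-paper argument to compare against; you have filled in what the authors chose to outsource.

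A couple of remarks on your write-up. First, the biconditional itself only needs $\EuScript{B}\in\EuScript{B}(A)$: your verification that the witness $\EuScript{C}$ is basic uses nothing about $\EuScript{B}$ beyond closure under $\sim$. The hypothesis $\EuScript{B}\in\EuScript{D}(A)$ is used solely in your opening nucleus observation, which delivers the additional (and, for the paper's applications, important) fact that $(\EuScript{A}\succ\EuScript{B})$ is again a division set. Second, the ``symmetric'' case you invoke at the end is not literally the same calculation with roles swapped, since the two members of a $\sim$-pair play asymmetric roles; the dual argument takes $\bar J=[a\wedge r,b\wedge r]$ and uses modularity in the form $(b\wedge r)\vee a=b\wedge(r\vee a)=b$ (from $l\leq a$ and $b\leq l\vee r$). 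This goes through just as smoothly, so your claim stands, but it is worth spelling out once.
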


As we mentioned in the introduction we are concerned with the boolean properties of modules categories and idioms, this boolean properties are measured by some special inflators that we will introduce:

\begin{dfn}\label{I}
Let $A$ be an idiom, consider the following sets of intervals:
\begin{itemize} 
\item[(1)]An interval $[a,b]$ is \emph{simple} if there is no $a< x< b$, that is, $[a,b]=\{a,b\}$. Denote by $\EuScript{S}mp$ the set of all simple intervals.  

\item[(2)] An interval $[a,b]$ of $A$ is \emph{complemented} if it is a complemented lattice, that is, for each $a\leq x\leq b$ there exist $a\leq y\leq b$ such that $a=x\wedge y$ and  $b=x\vee y$. Let $\EuScript{C}mp$ be the set of all complemented intervals. 

\item[(3)] We can relativize this notion , for each $\mathcal{B}\in\EuScript{B}(A)$, let $\EuScript{S}mp(\mathcal{B})$ be the set of all \[[a,b]\text{ such that for each } a\leq x\leq b, [a,x]\in\mathcal{B}\text{ or } [x,b]\in\mathcal{B}\]
This is the set of $\mathcal{B}$-simple intervals.

\item[(4)] Let $\EuScript{C}mp(\mathcal{B})$ be the set of all intervals $[a,b]$ such that  :\[ \forall a\leq x\leq b \text{ exists } a\leq y\leq b\text{  such that } [a,x\wedge y]\in\mathcal{B} \text{ and } [x\vee y,b]\in\mathcal{B}.\] 
This is the set of intervals $\mathcal{B}$-complemented.
With this, we have that $\EuScript{S}mp=\EuScript{S}mp(\EuScript{O})$ and $\EuScript{C}mp=\EuScript{C}mp(\EuScript{O})$.

\item[(5)] Given any $\mathcal{B}\in\EuScript{B}(A)$ denote by $\EuScript{F}ll(\mathcal{B})$ the set of all intervals $[a,b]$ such that, \[\text{ for all } a\leq x\leq b\text{ there exists } a\leq y\leq b \text{ with } a=x\wedge y \text{ and }[x\vee y,b]\in\mathcal{B}\] this is the set of all $\mathcal{B}$-\emph{full} intervals. Note that $\EuScript{C}mp(\EuScript{O})=\EuScript{F}ll(\EuScript{O})$. 

\item[(6)] Let $\EuScript{C}rt(\mathcal{B})$ be the set of intervals $[a,b]$ such that \[\text{ for all } a\leq x\leq b \text{ we have } a=x \text{ or } [x,b]\in\mathcal{B},\] this is the set of all $\mathcal{B}$-\emph{critical} intervals. Note that $\EuScript{S}mp(\EuScript{O})=\EuScript{C}rt(\EuScript{O})$. 
\end{itemize}
\end{dfn}

In \cite{simmonsgabriel} is proved that for any $\mathcal{B}\in\EuScript{B}(A)$, $\EuScript{F}ll(\mathcal{B})\leq \EuScript{C}mp(\mathcal{B})$. Moreover, one can show that for any $\mathcal{B}\in\EuScript{B}(A)$ the sets 
$\EuScript{C}mp(\mathcal{B})$ and $\EuScript{F}ll(\mathcal{B})$
are basic.

The item 4 is the main object of study in section \ref{BOYDIMID} and section \ref{Spectralasc}
\begin{dfn}\label{Boy0}
Let $A$ be an idiom. Given elements $a,b\in A$, we say that $b$ is \emph{essentially above} $a$ \[a\lessdot b\] if $a\leq b$ and for every $y\in A$ such that \[b\wedge y\leq a\Rightarrow y\leq a\]
\end{dfn}

If the idiom is distributive, that is, a frame then this notion is equivalent to $(b\succ a)=a$, and this is the central relation of the investigation in \cite{simmonshigher} and \cite{simmons2014cantor}.

Also observe that if $a\lessdot a$ then $a=1$.
The following Lemma will be useful and a proof can be found in \cite{simmonsrelative}.
\begin{lem}\label{full}
Let $A$ be an idiom and consider any basic set $\mathcal{B}\in\EuScript{B}(A)$. For each interval $[a,b]$ the following are equivalent:\begin{itemize}
\item[(1)] $[a,b]\in\EuScript{F}ll(\mathcal{B})$.
\item[(2)] $(\forall x\in A)[a\lessdot x\Rightarrow [x\wedge b, b]\in\mathcal{B}]$
\end{itemize}
\end{lem}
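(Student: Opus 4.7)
The plan is to prove the two directions separately. The forward direction (1)~$\Rightarrow$~(2) is a direct application of fullness to the element $x \wedge b$. The reverse direction (2)~$\Rightarrow$~(1) is substantially harder: I will produce a pseudo-complement of $z$ over $a$ via a Zorn / upper-continuity argument, upgrade it to an essential extension of $a$ via the modular law, feed that into (2), and finally intersect with $b$ to land inside $[a,b]$.

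For (1)~$\Rightarrow$~(2), assume $[a,b] \in \EuScript{F}ll(\mathcal{B})$ and take $x \in A$ with $a \lessdot x$. I set $z := x \wedge b$, which lies in $[a,b]$ because $a \leq x$ and $a \leq b$. Fullness produces $y \in [a,b]$ with $z \wedge y = a$ and $[z \vee y, b] \in \mathcal{B}$. Since $y \leq b$, the meet collapses: $z \wedge y = x \wedge y = a$; and the definition of $a \lessdot x$ forces $y \leq a$, hence $y = a$. Therefore $z \vee y = z = x \wedge b$, and $[x \wedge b, b] \in \mathcal{B}$, as required.

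For (2)~$\Rightarrow$~(1), fix $z \in [a,b]$ and consider
\[
T = \{\, y \in A : a \leq y, \ y \wedge z = a \,\}.
\]
Upper continuity (IDL) ensures that directed joins of members of $T$ stay in $T$, so Zorn's lemma delivers a maximal $y_0 \in T$. I emphasise that $y_0$ is \emph{not} restricted to lie below $b$: what I need to invoke (2) is the essentiality relation $a \lessdot z \vee y_0$ in all of $A$. Granting that relation, (2) yields $[(z \vee y_0) \wedge b, b] \in \mathcal{B}$. Setting $y := y_0 \wedge b \in [a,b]$, the modular law applied to $z \leq b$ gives $z \vee y = z \vee (y_0 \wedge b) = (z \vee y_0) \wedge b$, while $z \wedge y = z \wedge y_0 \wedge b = a$. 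Thus $y$ witnesses $[a,b] \in \EuScript{F}ll(\mathcal{B})$.

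The main obstacle is the essentiality claim $a \lessdot z \vee y_0$. The calculation I have in mind goes as follows. Take $w \in A$ with $w \wedge (z \vee y_0) \leq a$; replacing $w$ by $w \vee a$ (harmless because $a \leq z \vee y_0$) we may assume $a \leq w$. Applying (ML) to $y_0 \leq z \vee y_0$,
\[
(w \vee y_0) \wedge (z \vee y_0) \;=\; \bigl(w \wedge (z \vee y_0)\bigr) \vee y_0 \;\leq\; a \vee y_0 \;=\; y_0,
\]
and intersecting further with $z$ gives $(w \vee y_0) \wedge z \leq y_0 \wedge z = a$. Hence $w \vee y_0 \in T$, so the maximality of $y_0$ forces $w \leq y_0 \leq z \vee y_0$, whence $w = w \wedge (z \vee y_0) \leq a$. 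This is the only non-routine step; once it is in place, everything else is a matter of unwinding definitions and one application of the modular law to transfer between $y_0$ and $y_0 \wedge b$.
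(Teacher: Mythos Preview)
Your argument is correct in both directions. The forward implication is the expected one-line unwinding, and the reverse implication is handled exactly as one should in an upper-continuous modular lattice: Zorn over the set of pseudo-complements of $z$ above $a$, the modular computation $(w\vee y_0)\wedge(z\vee y_0)=y_0\vee\bigl(w\wedge(z\vee y_0)\bigr)\leq y_0$ to establish $a\lessdot z\vee y_0$, and then the second use of (ML) with $z\leq b$ to pull $y_0$ down to $y=y_0\wedge b\in[a,b]$ while preserving $z\vee y=(z\vee y_0)\wedge b$. Each step checks.

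As for comparison: the present paper does not give its own proof of this lemma; it simply records the statement and refers to \cite{simmonsrelative}. So there is nothing in the paper to compare against. For what it is worth, the argument you wrote is the standard one and is essentially what appears in Simmons' treatment: the key idea in both is that a maximal element $y_0$ of $\{y\geq a: y\wedge z=a\}$ makes $z\vee y_0$ essential over $a$, and modularity transports everything back inside $[a,b]$.
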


$\EuScript{F}ll(\mathcal{B})$ and $\EuScript{C}rt(\mathcal{B})$ are basic sets for any basic set $\mathcal{B}$ in particular for any nucleus $j$, we can consider $\EuScript{B}oy(\EuScript{D}_{j}):=\EuScript{D}vs(\EuScript{F}ll(\EuScript{D}_{j}))$ and $\EuScript{G}ab(\EuScript{D}_{j}):=\EuScript{D}vs(\EuScript{C}rt(\EuScript{D}_{j}))$. By Theorem \ref{00} we denote the corresponding nuclei as $Boy(j)$ and $Gab(j)$ respectively. The associations $j\mapsto Boy(j)$ and $j\mapsto Gab(j)$ set up two prenuclei on $N(A)$ called the \emph{Boyle} and \emph{Gabriel} derivative respectively.


The details of these facts are not straightforward, the reader must see \cite{simmonsrelative} and \cite{simmonsgabriel}.

Let us summarize some facts about this construction

\begin{obs}\label{BOYandgab}
If we consider the simple intervals and the complement intervals as in Definition \ref{I} then we can associate the corresponding inflators, these are the \emph{socle derivative}, $soc$ and the \emph{Cantor-Bendixson derivative}, $cdb$ respectively (these two are stable inflators).
The idea is that we want the relative version of these derivatives with respect the basic set given by any nucleus $j\longleftrightarrow\EuScript{D}_{j}$, that is why the author in \cite{simmonsrelative} introduces $\EuScript{F}ll(\EuScript{D}_{j})$ and $\EuScript{C}rt(\EuScript{D}_{j})$
\begin{itemize}
\item[(1)] Let $cdb_{j}$ be the corresponding inflator of $\EuScript{F}ll(\EuScript{D}_{j})$.
\item[(2)] Let $soc_{j}$ be the corresponding inflator of $\EuScript{C}rt(\EuScript{D}_{j})$.
\item[(3)] These two inflators are stable, then by Theorem \ref{Id3} their closure $cbd_{j}^{\infty}$ and $soc_{j}^{\infty}$ are nuclei on $A$. The corresponding division sets are $\EuScript{B}oy(\EuScript{D}_{j})$ and $\EuScript{G}ab(\EuScript{D}_{j})$ respectively, thus $Boy(j)=cbd_{j}^{\infty}$ and $Gab(j)=soc_{j}^{\infty}$.
\item[(4)] Note that if $[a,b]\in\EuScript{F}ll(\EuScript{D}_{j})$ then this interval is complemented in $A_{j}$.
\end{itemize}
\end{obs}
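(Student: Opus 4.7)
Items (1) and (2) are definitional, naming the inflators $cdb_{j}:=|\EuScript{F}ll(\EuScript{D}_{j})|$ and $soc_{j}:=|\EuScript{C}rt(\EuScript{D}_{j})|$ via Definition \ref{nuclei}. The substantive content sits in items (3) and (4), and the bulk of the new work is item (4); once that is in hand, (3) is bookkeeping through Theorems \ref{Id3}, \ref{00} and \ref{000}.

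For item (4), my plan is to transfer a complement from $A$ to $A_{j}$ via the quotient morphism $j^{*}$. Given $w\in A_{j}$ with $j(a)\leq w\leq j(b)$, I would lift $w$ to $c:=w\wedge b\in[a,b]$; since $j$ preserves binary meets and $w\leq j(b)$, one checks $j(c)=j(w)\wedge j(b)=w$. Fullness of $[a,b]$ applied to $c$ then produces $y\in[a,b]$ with $c\wedge y=a$ and $j(c\vee y)=j(b)$. Setting $y':=j(y)\in[j(a),j(b)]$, the verifications are $w\wedge y'=j(c)\wedge j(y)=j(c\wedge y)=j(a)$ (meets in $A_{j}$ coincide with meets in $A$) and $w\vee_{A_{j}}y'=j(w\vee y')=j(j(c)\vee j(y))=j(c\vee y)=j(b)$, where the equality $j(j(c)\vee j(y))=j(c\vee y)$ comes from the standard two-sided squeeze using monotonicity and idempotence of $j$. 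Hence $y'$ is a complement of $w$ in $[j(a),j(b)]$, and every element of the image interval is complemented.

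For item (3), stability of $cdb_{j}$ (and symmetrically $soc_{j}$) rests on the modularity lemma that for any basic set $\mathcal{B}$ and any $c\in A$, $[a,x]\in\mathcal{B}$ implies $[a\wedge c,x\wedge c]\in\mathcal{B}$: the similarity $[a\wedge c,x\wedge c]\sim[a,a\vee(x\wedge c)]$ realises the right-hand interval as a subinterval of $[a,x]$ by modularity, and $\mathcal{B}$ is closed under both similarity and subintervals. Combined with IDL this delivers $cdb_{j}(a)\wedge c\leq cdb_{j}(a\wedge c)$. Theorem \ref{Id3} then promotes $cdb_{j}^{\infty}$ and $soc_{j}^{\infty}$ to nuclei, and matching the transfinite iterate of the inflator with the explicit description of $\EuScript{D}vs$ in Theorem \ref{000} (together with the bijection in Theorem \ref{00}) identifies the corresponding division sets as $\EuScript{B}oy(\EuScript{D}_{j})$ and $\EuScript{G}ab(\EuScript{D}_{j})$, whence $Boy(j)=cdb_{j}^{\infty}$ and $Gab(j)=soc_{j}^{\infty}$.

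The main obstacle will be the stability step in (3): because $\EuScript{F}ll(\EuScript{D}_{j})$ and $\EuScript{C}rt(\EuScript{D}_{j})$ are only basic and not manifestly pre-division, directedness of the family $\{x:[a,x]\in\EuScript{F}ll(\EuScript{D}_{j})\}$ used to apply IDL is not automatic and must be extracted from the basic-set structure together with the specific form of fullness and criticality; aligning $cdb_{j}^{\infty}$ with $\EuScript{D}vs(\EuScript{F}ll(\EuScript{D}_{j}))$ then requires a transfinite induction matching the two ordinal-indexed constructions. By contrast item (4) is essentially formal unwinding of the preservation properties of $j$, which is why I would dispatch it first.
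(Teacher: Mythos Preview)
The paper does not prove Remark \ref{BOYandgab}: it is explicitly presented as a summary of facts drawn from \cite{simmonsrelative} and \cite{simmonsgabriel}, with the sentence immediately preceding it reading ``The details of these facts are not straightforward, the reader must see \cite{simmonsrelative} and \cite{simmonsgabriel}.'' So there is no in-paper proof to compare your argument against.

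That said, your proof of item (4) is correct and clean: lifting $w\in[j(a),j(b)]$ to $c=w\wedge b$, invoking fullness to obtain $y$, and pushing $j(y)$ back down as a complement works exactly as you describe. For item (3), your reduction of stability to the lemma $[a,x]\in\mathcal{B}\Rightarrow[a\wedge c,x\wedge c]\in\mathcal{B}$ via similarity and the subinterval closure is the standard move, and you are right that applying IDL then requires directedness of $\{x:[a,x]\in\EuScript{F}ll(\EuScript{D}_{j})\}$. That directedness is in fact routine once one uses Lemma \ref{full}: if $a\lessdot z$ then $x_{i}\leq j(z\wedge x_{i})\leq j(z\wedge(x_{1}\vee x_{2}))$ for $i=1,2$, so $[a,x_{1}\vee x_{2}]$ is again full. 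The analogous directedness for $\EuScript{C}rt(\EuScript{D}_{j})$ is genuinely less obvious, as you flag; in Simmons' treatment one typically passes through the semicritical intervals $\EuScript{S}ct(\EuScript{D}_{j})$ (cf.\ the paragraph after the Remark) rather than arguing directedness of the critical ones directly. In any case, the paper itself offloads all of this to the cited references, so your write-up already goes further than what appears here.
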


There exists other construction for $soc_{j}$. Define the set of $j$\emph{-semicritical} intervals as the set of intervals $[a,b]$ such that there exists $X\subseteq [a,b]$ with $[a,x]\in\EuScript{C}rt(\EuScript{D}_{j})$ for each $x\in X$. Denote the set of all this intervals by $\EuScript{S}ct(\EuScript{D}_{j})$, note that, if $j=id$ then $\EuScript{S}rt(\EuScript{D}_{j})=\EuScript{S}\EuScript{S}p$ the set of \emph{ semi-simple } intervals. The set of semi-critical intervals is characterized by: \[[a,b]\in\EuScript{S}ct(\EuScript{D}_{j})\Leftrightarrow b\leq soc_{j}(a).\] Then \[soc_{j}\leftrightarrow\EuScript{S}ct(\EuScript{D}_{j})\;\;\;\; cbd_{j}\leftrightarrow\EuScript{F}ll(\EuScript{D}_{j})\]

The relation of these basic sets is described in the following:

\begin{lem}\label{0000}
For any nucleus $j$ with division set $\EuScript{D}_{j}$,
\[\EuScript{S}ct(\EuScript{D}_{j})=\EuScript{G}ab(\EuScript{D}_{j})\cap\EuScript{F}ll(\EuScript{D}_{j}).\]
\end{lem}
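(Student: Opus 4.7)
The plan is to show the equality by establishing both inclusions. The forward inclusion $\EuScript{S}ct(\EuScript{D}_{j})\subseteq\EuScript{G}ab(\EuScript{D}_{j})\cap\EuScript{F}ll(\EuScript{D}_{j})$ is short; the reverse inclusion will carry the substantive content.

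For $\EuScript{S}ct\subseteq\EuScript{G}ab$, the inclusion $\EuScript{C}rt(\EuScript{D}_{j})\subseteq\EuScript{D}vs(\EuScript{C}rt(\EuScript{D}_{j}))=\EuScript{G}ab(\EuScript{D}_{j})$ yields $soc_{j}\leq Gab(j)$ on the associated inflators, so $b\leq soc_{j}(a)$ gives $b\leq Gab(j)(a)$, i.e.\ $[a,b]\in\EuScript{G}ab(\EuScript{D}_{j})$. For $\EuScript{S}ct\subseteq\EuScript{F}ll$ I would use Lemma \ref{full}: given $a\lessdot y$, I must show $b\leq j(y\wedge b)$. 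Working element-wise, for each $x$ with $[a,x]\in\EuScript{C}rt(\EuScript{D}_{j})$ I may replace $x$ by $x\wedge b$ (still in $\EuScript{C}rt(\EuScript{D}_{j})$ since that set is basic); then $y\wedge (x\wedge b)\in[a,x\wedge b]$ is either $a$---in which case the essentially-above property $a\lessdot y$ forces $x\wedge b=a$---or strictly above $a$, in which case the critical dichotomy gives $j(y\wedge x\wedge b)=j(x\wedge b)$, whence $x\wedge b\leq j(y\wedge b)$. Taking suprema, together with the idiom distributive law, yields $b\leq j(y\wedge b)$, as required.

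For the reverse inclusion, assume $[a,b]\in\EuScript{G}ab(\EuScript{D}_{j})\cap\EuScript{F}ll(\EuScript{D}_{j})$ and set $m=soc_{j}(a)\wedge b$; the aim is to force $m=b$. Apply the fullness of $[a,b]$ at $x=m$: there is $y\in[a,b]$ with $m\wedge y=a$ and $j(m\vee y)=j(b)$. The crucial claim is $y=a$. Indeed, if $y>a$, then $[a,y]\subseteq[a,b]$ lies in $\EuScript{G}ab(\EuScript{D}_{j})$ (basic), and applying Theorem \ref{000} with $a\leq a<y$ produces $a<y'\leq y$ with $[a,y']\in\EuScript{C}rt(\EuScript{D}_{j})$; then $y'\leq soc_{j}(a)$ and $y'\leq b$ force $y'\leq m$, and together with $y'\leq y$ this gives $y'\leq m\wedge y=a$, a contradiction. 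Hence $y=a$, so $j(m)=j(b)$ and $[m,b]\in\EuScript{D}_{j}$.

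I anticipate the main obstacle to be the final step: upgrading $[m,b]\in\EuScript{D}_{j}$, i.e. $b\leq j(m)=j(soc_{j}(a))\wedge j(b)$ and hence $b\leq j(soc_{j}(a))$, to $b\leq soc_{j}(a)$. This amounts to showing that $soc_{j}(a)$ is itself $j$-closed; I would extract this by exploiting the inclusion $\EuScript{D}_{j}\subseteq\EuScript{C}rt(\EuScript{D}_{j})$ together with the structural dichotomy describing critical intervals (each $[a,x]\in\EuScript{C}rt(\EuScript{D}_{j})$ either satisfies $x\leq j(a)$, or satisfies $j(a)\wedge x=a$), so that every $j$-closure step above an element of $\EuScript{C}rt(\EuScript{D}_{j})$ is already absorbed into $soc_{j}(a)$. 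Once this is in hand, $b\leq j(soc_{j}(a))=soc_{j}(a)$ contradicts the assumption $m<b$ and closes the proof.
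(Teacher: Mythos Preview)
The paper does not supply its own proof of this lemma; it simply defers to \cite{simmonsrelative}, Lemma~6.4. So there is no in-paper argument to compare against, and your proposal has to stand on its own.

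Your forward inclusion is essentially correct. For $\EuScript{S}ct(\EuScript{D}_{j})\subseteq\EuScript{G}ab(\EuScript{D}_{j})$ the inflator comparison $soc_{j}\le soc_{j}^{\infty}=Gab(j)$ does the job. For $\EuScript{S}ct(\EuScript{D}_{j})\subseteq\EuScript{F}ll(\EuScript{D}_{j})$ via Lemma~\ref{full}, the case split on each critical generator $x\in X$ is right: either $y\wedge x=a$ and then $a\lessdot y$ forces $x=a$, or criticality gives $x\le j(y\wedge x)\le j(y\wedge b)$; taking the supremum over the witnessing family $X$ with $\bigvee X=b$ finishes. (Your appeal to the idiom distributive law is not needed here: you only use that a supremum of elements below $j(y\wedge b)$ stays below $j(y\wedge b)$.)

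For the reverse inclusion, your set-up is correct and the argument that the complement $y$ produced by fullness at $m=soc_{j}(a)\wedge b$ must equal $a$ is clean: if $y>a$ then $[a,y]\in\EuScript{G}ab(\EuScript{D}_{j})$ and Theorem~\ref{000} yields a nontrivial $[a,y']\in\EuScript{C}rt(\EuScript{D}_{j})$ with $y'\le y$, forcing $y'\le m\wedge y=a$. Hence $[m,b]\in\EuScript{D}_{j}$.

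The genuine gap is precisely where you flag it. Passing from $b\le j(m)\le j(soc_{j}(a))$ to $b\le soc_{j}(a)$ requires $soc_{j}(a)\in A_{j}$, and your proposed mechanism does not deliver this. The dichotomy you state is correct---every $[a,x]\in\EuScript{C}rt(\EuScript{D}_{j})$ satisfies $x\le j(a)$ or $j(a)\wedge x=a$---but it does not imply $j(x)\le soc_{j}(a)$. In the second branch one obtains $[j(a),\,j(a)\vee x]\in\EuScript{C}rt(\EuScript{D}_{j})$ by abstractness, yet $[a,j(x)]$ need not be critical: for any $a<z\le j(a)$ one has $j(z)=j(a)$, and when $j(x)>j(a)$ neither disjunct of the critical condition holds at such a $z$. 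So the sketch, as written, does not close. What is actually needed is either an independent proof that $j\circ soc_{j}=soc_{j}$ (equivalently, that $\EuScript{S}ct(\EuScript{D}_{j})$ absorbs extensions by $\EuScript{D}_{j}$ on top), or a reorganisation of the endgame that avoids this step altogether. Since the paper itself only cites Simmons here, the practical route is to consult \cite{simmonsrelative} for the intended device.
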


The proof of this Lemma is in \cite{simmonsrelative}(Lemma 6.4).

Now for last we will discuss the dimension facts about this theory.
First since $N(A)$ is a frame then it has its own inflators, in particular it has its soclo derivative $Soc$ and its Cantor-Bendixson derivative $Cbd$.

\begin{dfn}\label{dim}
Let $A$ be an idiom and $S\leq Cbd$ any stable inflator on the frame $N(A)$. For each $j\in N(A)$ we set \[LS(j)=(RS(j)\succ j)\;\;\; RS(j)=(S(j)\succ j)\] where $(\_\succ\_)$ is the implication of $N(A)$.
\end{dfn}

This two operators are studied in \cite{simmonsgabriel}. In particular the following Theorem (\cite{simmonsgabriel} Theorem 5.5) is proved.

\begin{thm}\label{dim1}
Let $A$ be an idiom and consider any stable inflator $S\leq Cbd$ on the frame $N(A)$, then \[S=S^{\infty}\wedge Cbd=LS\wedge Cbd\] in particular $S=Cbd$ if $LS=Tp$ (the top of $N^{2}(A)$).
\end{thm}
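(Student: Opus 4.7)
The plan is to derive both equalities in $S=S^{\infty}\wedge Cbd=LS\wedge Cbd$ by exploiting stability of $S$ against the complemented structure encoded in $Cbd$, working inside the frame $N(A)$ and translating to division sets via Theorem \ref{00} when convenient. The easy inclusions come for free: $S\leq S^{\infty}$ by construction of the $\infty$-iterate, $S\leq Cbd$ by hypothesis, and $S\leq LS$ from the frame identity $a\leq ((a\succ b)\succ b)$ applied with $a=S(j)$ and $b=j$. Hence $S\leq S^{\infty}\wedge Cbd$ and $S\leq LS\wedge Cbd$ pointwise in $I(N(A))$.

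The crux is the reverse inequality $S^{\infty}\wedge Cbd\leq S$. Fix $j\in N(A)$ and $j\leq k\leq S^{\infty}(j)\wedge Cbd(j)$. Via Theorem \ref{00} and Remark \ref{BOYandgab} (read one level up, inside $N(A)$), the condition $k\leq Cbd(j)$ says the interval $[j,k]$ in $N(A)$ lies in $\EuScript{F}ll$ of the trivial division set, and by Lemma \ref{full} this translates into a concrete interaction with the essentially-above relation of Definition \ref{Boy0}. I would then run transfinite induction on the least ordinal $\alpha$ with $k\leq S^{\alpha}(j)$: at a successor stage, use stability of $S$ to push the meet with a $Cbd$-complement through $S$ and collapse $S^{\alpha+1}(j)\wedge k$ back into $S(j)$; at a limit stage, exploit upper-continuity of $N(A)$ together with directedness of the complemented part to reduce to strictly smaller ordinals. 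The outcome is $k\leq S(j)$, giving $S^{\infty}\wedge Cbd\leq S$.

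For the second identity $S^{\infty}\wedge Cbd=LS\wedge Cbd$, I would establish $S^{\infty}\leq LS$ and then wedge with $Cbd$. The inequality $S^{\infty}\leq LS$ is proved by induction on the iterate: the base case $S\leq LS$ is already noted, and the inductive step uses that $LS(j)=((S(j)\succ j)\succ j)$, which via the sub-interval description of the implication in Lemma \ref{div} is closed under another application of $S$. Combined with the crux step this gives the chain $S=S^{\infty}\wedge Cbd\leq LS\wedge Cbd\leq S$, forcing equality throughout. The final assertion then follows at once: if $LS=Tp$ then $LS(j)=tp$ for every $j\in N(A)$, so $S(j)=LS(j)\wedge Cbd(j)=Cbd(j)$, i.e.\ $S=Cbd$.

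The main obstacle is the limit stage of the transfinite collapse. Keeping control of the complementedness from $Cbd$ through a directed supremum $S^{\lambda}=\bigvee_{\alpha<\lambda}S^{\alpha}$ is where stability of $S$, upper-continuity in $N(A)$, and the $\EuScript{F}ll$-description of $Cbd$ must be orchestrated most delicately; this is also the only place where the hypothesis $S\leq Cbd$ is used essentially rather than merely formally.
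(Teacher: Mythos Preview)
The paper does not prove Theorem~\ref{dim1} at all: it is quoted from \cite{simmonsgabriel} (Theorem~5.5) as background, so there is no in-paper proof to compare against. That said, your strategy is sound and would go through, but you have misdiagnosed where the difficulty lies.

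Your successor step is the heart of the matter, and it works cleanly once made explicit. Since $S(j)\leq Cbd(j)$ and $[j,Cbd(j)]$ is complemented, let $c$ be the complement of $S(j)$ there, so $S(j)\wedge c=j$ and $S(j)\vee c=Cbd(j)$. Stability gives $S^{2}(j)\wedge c\leq S(S(j)\wedge c)=S(j)$, whence $S^{2}(j)\wedge c\leq S(j)\wedge c=j$; by the same one-line computation, $S^{\alpha}(j)\wedge c\leq j$ for every successor $\alpha$. The limit stage is then immediate from the frame distributive law in $N(A)$: $S^{\lambda}(j)\wedge c=\bigvee_{\alpha<\lambda}(S^{\alpha}(j)\wedge c)\leq j$. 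Finally $S^{\infty}(j)\wedge Cbd(j)=(S^{\infty}(j)\wedge S(j))\vee(S^{\infty}(j)\wedge c)\leq S(j)$. So the limit case is trivial, not delicate, and the hypothesis $S\leq Cbd$ is used at the very start (to place $S(j)$ inside the Boolean interval and obtain $c$), not at the limit.

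For the second equality you do not need a separate induction: the same argument with $RS(j)=(S(j)\succ j)$ in place of $c$ (note $S(j)\wedge RS(j)=j$) shows $S^{\alpha}(j)\wedge RS(j)\leq j$ for all $\alpha$, hence $S^{\infty}\leq LS$ directly. Then $LS(j)\wedge Cbd(j)=(LS(j)\wedge S(j))\vee(LS(j)\wedge c)\leq S(j)\vee j=S(j)$, since $c\leq RS(j)$ and $LS(j)\wedge RS(j)\leq j$. Your appeal to Lemma~\ref{full} and the $\EuScript{F}ll$-description is unnecessary machinery here; all that is needed is that $[j,Cbd(j)]$ is complemented and that $N(A)$ is a frame.
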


\begin{dfn}\label{dim2}
Let $A$ be an idiom and $S$ an stable inflator with $S\leq Cbd$ on the frame $N(A)$. Consider any nucleus $j\in N(A)$, we say that $j$ has:\[S\text{-dimension }\;\;\; \text{ if } S^{\infty}(j)=tp\]  and \[ \text{ weak }S\text{-dimension }\;\;\; \text{ if } LS(j)=tp.\]
\end{dfn}

It follows that if $j$ has $S$-dimension then $S=Cbd$. 

Recall that in any idiom the Cantor-Bendixson derivative $cbd$ produce the largest complemented interval $[a,cbd(a)]$ above $a$. This investigation particularizes in the Boyle-derivative so we state an important fact about it.

\begin{thm}\label{boyfacts}
Let $j$ be a nucleus on an idiom $A$.
Then the interval $[j, Boy(j)]$ is boolean.
\end{thm}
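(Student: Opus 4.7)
The plan is to reduce, via the frame isomorphism $[j, tp] \cong N(A_j)$, to showing that for any idiom $B$ the interval $[id_B, cbd_B^{\infty}]$ is boolean in $N(B)$. The Boyle derivative is compatible with this quotient in the following sense: $\EuScript{F}ll(\EuScript{D}_j)$-intervals in $A$ are precisely the complemented intervals in $A_j$ (Remark~\ref{BOYandgab}(4)), so under the isomorphism $Boy(j) \in N(A)$ corresponds to $cbd_{A_j}^{\infty} \in N(A_j)$, and the statement reduces to a claim about the quotient idiom alone.

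Once we are in this ``bottom-of-assembly'' case, I would translate through the identification $N(B) \longleftrightarrow \EuScript{D}(B)$ of Theorem~\ref{00}. The interval $[id_B, cbd_B^{\infty}]$ corresponds to $[\EuScript{O}, \EuScript{D}vs(\EuScript{C}mp)]$ in $\EuScript{D}(B)$, since $\EuScript{F}ll(\EuScript{O}) = \EuScript{C}mp(\EuScript{O}) = \EuScript{C}mp$. To show this sub-frame is a complete boolean algebra, for each $\mathcal{E}$ in the interval I would propose the candidate complement
\[
\mathcal{E}^{\perp} \;:=\; \EuScript{D}vs(\EuScript{C}mp) \,\wedge\, (\mathcal{E} \succ \EuScript{O}),
\]
where $\succ$ is the Heyting implication on $\EuScript{D}(B)$ described explicitly in Lemma~\ref{div}. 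The trivial intersection $\mathcal{E} \wedge \mathcal{E}^{\perp} = \EuScript{O}$ is formal from the properties of $\succ$; the content is in the equality $\mathcal{E} \vee \mathcal{E}^{\perp} = \EuScript{D}vs(\EuScript{C}mp)$, where the join is taken in $\EuScript{D}(B)$ and therefore equals $\EuScript{D}vs(\mathcal{E} \cup \mathcal{E}^{\perp})$.

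The main obstacle is precisely this join-equality. By Theorem~\ref{000}, an interval $[a,b]$ lies in $\EuScript{D}vs(\EuScript{C}mp)$ iff for every $a \le x < b$ there exists $x < y \le b$ with $[x,y]$ complemented. To show such an $[a,b]$ lies in $\EuScript{D}vs(\mathcal{E} \cup \mathcal{E}^{\perp})$, I would run a transfinite induction along the powers $cbd^{\alpha}$: at each stage, given a complemented step $[x,y]$, use its internal complementation together with the modular law to split $[x,y]$ into a sub-piece landing in $\mathcal{E}$ and one that avoids every non-trivial sub-interval of $\mathcal{E}$, the latter being in $\mathcal{E}^{\perp}$ by the description in Lemma~\ref{div}. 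The delicate verification is that the ``avoiding'' piece actually lies in $\EuScript{D}vs(\EuScript{C}mp)$ and not merely in $(\mathcal{E} \succ \EuScript{O})$; this is what keeps $\mathcal{E}^{\perp}$ inside the interval $[\EuScript{O}, \EuScript{D}vs(\EuScript{C}mp)]$ we are trying to show is boolean, and it is the point at which one expects to invoke the full strength of the $\EuScript{F}ll$-characterization via Lemma~\ref{full}.
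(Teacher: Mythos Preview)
The paper does not prove this theorem; it is stated in the Preliminaries as a known fact (implicitly imported from \cite{simmonsrelative} and \cite{simmonsgabriel}), so there is no argument in the paper to compare against. I will therefore comment on your plan on its own merits.

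Your reduction to $B=A_j$ and the candidate complement $\mathcal{E}^{\perp}=\EuScript{D}vs(\EuScript{C}mp)\wedge(\mathcal{E}\succ\EuScript{O})$ are both correct, and you have correctly located the only nontrivial step: showing $\EuScript{C}mp\subseteq\EuScript{D}vs(\mathcal{E}\cup\mathcal{E}^{\perp})$. One quibble: Remark~\ref{BOYandgab}(4) only gives one implication, so it does not by itself justify that $Boy(j)$ corresponds to $cbd_{A_j}^{\infty}$ under $[j,tp]\cong N(A_j)$; the honest argument goes through the formula $cbd_j(a)=\bigwedge\{j(x)\mid j(a)\lessdot x\}$ together with the observation that $\lessdot$ in $A$ and in $A_j$ agree on elements of $A_j$.

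The genuine gap is in your attack on the join equality. You propose to ``split $[x,y]$ into a sub-piece landing in $\mathcal{E}$ and one that avoids every non-trivial sub-interval of $\mathcal{E}$''. In a complemented interval this is generally impossible: if you take the $\mathcal{E}$-saturation $z$ of $x$ in $[x,y]$ and a complement $z'$, the interval $[x,z']\sim[z,y]$ will typically still contain nontrivial $\mathcal{E}$-subintervals (take $\mathcal{E}$ to be the simple intervals in a boolean lattice of length $3$). So the proposed split does not produce a piece in $\mathcal{E}^{\perp}$, and the transfinite induction you sketch does not get started.

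What does work is a pointwise dichotomy via Theorem~\ref{000}. Fix a complemented $[a,b]$ and $a\le u<b$; the subinterval $[u,b]$ is again complemented. Either (i) some nontrivial $[p,q]\subseteq[u,b]$ lies in $\mathcal{E}$, in which case a complement $p'$ of $p$ in $[u,b]$ together with modularity gives $[u,\,q\wedge p']\sim[p,q]\in\mathcal{E}$ with $q\wedge p'>u$; or (ii) no such subinterval exists, whence $[u,b]\in(\mathcal{E}\succ\EuScript{O})$ by Lemma~\ref{div}, and since $[u,b]\in\EuScript{C}mp\subseteq\EuScript{D}vs(\EuScript{C}mp)$ we get $[u,b]\in\mathcal{E}^{\perp}$ outright. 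In particular the ``delicate verification'' you flagged dissolves in branch (ii): the avoiding piece is $[u,b]$ itself, which is complemented on the nose, so no appeal to Lemma~\ref{full} is needed.
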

Therefore in any idiom  $Boy\leq Cbd$.
In particular if $j$ has the property that $Boy(j)=tp$ then the upper section $\uparrow(j)$ is boolean and this upper section is isomorphic to $N(A_{j})$ as frames, we will use this fact in several parts of the investigation.

\section{Boyle dimension for idioms}\label{BOYID}

First we are going to prove a slight modification of Theorem 4.10 of \cite{simmons2014cantor} and Theorems 5.13 and 5.14 of \cite{simmonshigher}. Then we will connect these ideas with the Boyle dimension. In the first Theorem is used the fact (Theorem 6.5 of \cite{simmonsrelative}) that in any idiom 
\[cdb_{j}(a)=\bigwedge\left\{j(x)\mid j(a)\lessdot x\right\}.\]

\begin{thm}\label{BOY}
For an idiom $A$ and a nucleus $j\in N(A)$ the following are equivalent:
\begin{itemize}
\item[(1)] $N(A_{j})$ is boolean.
\item[(2)] $Boy(j)=tp$.
\item[(3)] $a\lessdot cbd_{j}(a)$ for all $a\in A_{j}$ .
\end{itemize}
\end{thm}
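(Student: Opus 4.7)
I would establish the equivalences via the cycle (2)$\Rightarrow$(1), (2)$\Rightarrow$(3), (3)$\Rightarrow$(2), and (1)$\Rightarrow$(2). The first three are direct, while (1)$\Rightarrow$(2) is the main obstacle. The implication (2)$\Rightarrow$(1) is immediate: by Theorem \ref{boyfacts}, $Boy(j)=tp$ forces $[j,tp]=[j,Boy(j)]$ to be boolean, and this interval is isomorphic to $N(A_{j})$.

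For (2)$\Rightarrow$(3), I fix $a\in A_{j}$ and suppose $y\in A$ satisfies $cbd_{j}(a)\wedge y\leq a$ but $y\not\leq a$. Then $a<a\vee y$, and since $Boy(j)=tp$ we have $[a,a\vee y]\in\EuScript{B}oy(\EuScript{D}_{j})=\EuScript{D}vs(\EuScript{F}ll(\EuScript{D}_{j}))$. Theorem \ref{000} applied with $x=a$ produces $c$ with $a<c\leq a\vee y$ and $[a,c]\in\EuScript{F}ll(\EuScript{D}_{j})$, so $c\leq cbd_{j}(a)$. By the modular law (using $a\leq c$), $c=c\wedge(a\vee y)=a\vee(c\wedge y)\leq a\vee(cbd_{j}(a)\wedge y)\leq a$, contradicting $a<c$.

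For (3)$\Rightarrow$(2), let $a\in A$ and set $b:=Boy(j)(a)=cbd_{j}^{\infty}(a)$. Since $Boy(j)\geq j$, we have $b\in A_{j}$; as a fixed point of the iteration, $cbd_{j}(b)=b$. Applying (3) to $b\in A_{j}$ yields $b\lessdot cbd_{j}(b)=b$, which forces $b=1$ (as noted in the text after Definition \ref{Boy0}). The formula $cbd_{j}(a)=\bigwedge\{j(x)\mid j(a)\lessdot x\}$ gives $cbd_{j}(a)=cbd_{j}(j(a))$, so $Boy(j)(a)=Boy(j)(j(a))=1$ for every $a\in A$; hence $Boy(j)=tp$.

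The main obstacle is (1)$\Rightarrow$(2). My plan is a reduction to the case $j=id$ applied to $A_{j}$: using the formula for $cbd_{j}$ together with the coincidence of the relation $\lessdot$ on $A$ and on $A_{j}$ for elements of $A_{j}$, one verifies $cbd_{j,A}|_{A_{j}}=cbd_{A_{j}}$, and a transfinite induction (using that the closure is the least fixed point above the starting element) gives $Boy_{A}(j)(a)=Boy_{A_{j}}(id_{A_{j}})(j(a))$ for every $a\in A$. Consequently $Boy_{A}(j)=tp$ is equivalent to $Boy_{A_{j}}(id_{A_{j}})=tp_{A_{j}}$, reducing (1)$\Rightarrow$(2) to the special case: for any idiom $B$, if $N(B)$ is boolean then $Boy_{B}(id_{B})=tp_{B}$. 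For this reduced statement, the boolean structure of $[id_{B},tp_{B}]=N(B)$ yields $Cbd(id_{B})=tp_{B}$ in $N(N(B))$, and I would then try to transfer this equality to $Boy_{B}(id_{B})=tp_{B}$ by combining Theorem \ref{dim1} with additional analysis of the boolean upper section. The technical difficulty of this final transfer — making the implication $Cbd=tp\Rightarrow Boy=tp$ precise at the base level — is where I expect the main work to lie.
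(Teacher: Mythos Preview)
Your arguments for (2)$\Rightarrow$(1), (2)$\Rightarrow$(3) and (3)$\Rightarrow$(2) are all correct. The last of these is essentially the paper's argument: apply (3) to the fixed point $cbd_j^{\infty}(0)\in A_j$, use that $b\lessdot b$ forces $b=1$, and then monotonicity gives $Boy(j)=tp$. Your (2)$\Rightarrow$(3), however, is a genuinely different route. You exploit $\EuScript{B}oy(\EuScript{D}_j)=\EuScript{D}vs(\EuScript{F}ll(\EuScript{D}_j))$ together with Theorem~\ref{000} and the modular law to produce a contradiction. The paper instead runs a transfinite induction: from $b\wedge cbd_j(a)\leq a$ and the stability of $cbd_j$ one gets $b\wedge cbd_j^{\alpha}(a)\leq a$ for every ordinal $\alpha$, hence $b\wedge cbd_j^{\infty}(a)\leq a$, and then $cbd_j^{\infty}(a)=Boy(j)(a)=1$ finishes. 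Both are short; yours avoids the induction, the paper's avoids invoking the $\EuScript{D}vs$ description.

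Where you diverge sharply from the paper is (1)$\Leftrightarrow$(2). The paper does not treat this as an obstacle at all: it simply records ``(1)$\Leftrightarrow$(2) It is immediate'', relying on the isomorphism $[j,tp]\cong N(A_j)$, Theorem~\ref{boyfacts}, and the results of Simmons cited in the preamble (the theorem is announced as a ``slight modification'' of Theorem~4.10 of \cite{simmons2014cantor} and Theorems~5.13, 5.14 of \cite{simmonshigher}). So the implication you single out as ``the main obstacle''---and attack via a reduction to $j=id$ followed by a transfer from $Cbd(id)=tp$ to $Boy(id)=tp$---is not argued in the paper; it is imported. Your reduction to $B=A_j$ is reasonable, but you yourself flag the final step as unfinished, and as written your proposal has a gap at precisely the point the paper handles by citation rather than argument.
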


\begin{proof}
(1) $\Leftrightarrow$ (2) It is immediate.

Suppose (2), let $a,b\in A_{j}$ such that $b\wedge cbd_{j}(a)\leq a$. By an induction argument it follows that $b\wedge cbd_{j}^{\infty}(a)\leq a$. By Remark \ref{BOYandgab} item 3, $cbd_{j}^{\infty}(a)=Boy(j)(a)$ thus $cbd_{j}^{\infty}(a)=Boy(j)(a)=1$, that is, $a\lessdot cbd_{j}(a)$. Thus, $Boy(j)(0)=cbd_{j}^\infty(0)=1$.

Conversely, if $a\lessdot cbd_{j}(a)$ for all $a\in A_{j}$ in particular for $a=cbd_{j}^{\infty}(0)$. Then $a\lessdot cbd_{j}(a)\leq cbd_{j}^{\infty}(a)=a$, therefore $a=1$.
\end{proof}

\begin{cor}\label{BOY1}
Let $A$ be an idiom then the following are equivalent:
\begin{itemize}
\item[(1)] $N(A)$ is boolean.
\item[(2)] $Boy(id)=tp$.
\item[(3)] $a\lessdot cbd(a)$ for all $a\in A$.
\end{itemize}
\end{cor}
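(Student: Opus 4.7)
The plan is to derive this corollary as a direct specialization of Theorem \ref{BOY} to the case $j = \mathrm{id}$, the bottom nucleus in $N(A)$.

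First, I would observe that for $j = \mathrm{id}$ we have $A_j = A$, since every element of $A$ is fixed by the identity. Hence $N(A_j) = N(A)$, so condition (1) of Corollary \ref{BOY1} is literally condition (1) of Theorem \ref{BOY} applied at $j=\mathrm{id}$. Similarly, condition (2) of the corollary is condition (2) of the theorem at $j=\mathrm{id}$.

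Next I would check that the relative inflator $cbd_{\mathrm{id}}$ coincides with the absolute Cantor--Bendixson derivative $cbd$. Unwinding Definition \ref{I}, the division set $\EuScript{D}_{\mathrm{id}}$ consists of the trivial intervals $\EuScript{O}(A)$, so $\EuScript{F}ll(\EuScript{D}_{\mathrm{id}}) = \EuScript{F}ll(\EuScript{O}) = \EuScript{C}mp(\EuScript{O}) = \EuScript{C}mp$ (the equality $\EuScript{F}ll(\EuScript{O})=\EuScript{C}mp(\EuScript{O})$ is noted explicitly in Definition \ref{I}(5)). Taking associated inflators as in Definition \ref{nuclei}, this yields $cbd_{\mathrm{id}} = cbd$. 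Thus condition (3) of the corollary matches condition (3) of the theorem at $j = \mathrm{id}$.

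Once these three identifications are in place, the three equivalences transport verbatim from Theorem \ref{BOY}, so the corollary follows immediately. The main (and only) subtlety will be the bookkeeping in the second step: making sure that the relative Cantor--Bendixson derivative $cbd_j$ really reduces to $cbd$ when $j$ is the identity, which hinges on the already-observed fact $\EuScript{F}ll(\EuScript{O}) = \EuScript{C}mp(\EuScript{O})$ and the correspondence in Theorem \ref{00} between division sets and nuclei.
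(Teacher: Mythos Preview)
Your proposal is correct and follows exactly the paper's approach: the paper's proof is the single line ``A direct application of Theorem \ref{BOY} with $j=id$.'' You have simply spelled out the identifications ($A_{\mathrm{id}}=A$, $\EuScript{D}_{\mathrm{id}}=\EuScript{O}$, hence $cbd_{\mathrm{id}}=cbd$) that make this specialization work, and those verifications are all sound.
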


\begin{proof}
A direct application of Theorem \ref{BOY} with $j=id$.
\end{proof}

\begin{ej}
The following lattice $A$ is the only idiom which is not a frame among all lattices with $n$ points for $1\leq n\leq 5$ (up to isomorphism).

\begin{center}
\begin{tabular}{c c}
\xymatrix{ & 1 \ar@{-}[dl] \ar@{-}[dr] \ar@{-}[d] &  \\ a \ar@{-}[dr] & b \ar@{-}[d] & c \ar@{-}[dl] \\ & 0 & } &
\xymatrix{0\lessdot 1=cbd(0) \\ a\lessdot 1=cbd(a) \\ b\lessdot 1=cbd(b) \\ c\lessdot 1=cbd(c)}
\end{tabular}
\end{center}

By Corollary \ref{BOY1}, $N(A)$ is boolean. For the case $n=6$ the following idiom does not satisfies Corollary \ref{BOY1}.

\begin{tabular}{c c}
\xymatrix{ & & 1\ar@{-}[dl] \ar@{-}[dr] & \\ & c \ar@{-}[dl] \ar@{-}[dr] & & d \ar@{-}[dl] \\ a \ar@{-}[dr] & & b \ar@{-}[dl] & \\ & 0 & & } &
\xymatrix{ 0\lessdot c \\ 0\lessdot d \\ 0\lessdot 1 \\ \text{Hence, }cbd(0)=b}
\end{tabular}

Note that $0$ is not essential below $b$.
\end{ej}

\begin{obs}\label{Boy66}
Recall that the essentially above relation \ref{Boy0} $a\lessdot b$ on a frame is equivalente to $(b\succ a)=a$, therefore observe that in particular $\R\B(j)=j\Leftrightarrow j\lessdot Boy(j)$.
\end{obs}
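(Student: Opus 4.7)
The plan is to observe that this remark is a direct instance of the general frame fact it quotes, applied inside the frame $N(A)$. By Theorem \ref{0}, $N(A)$ is a frame, so by Proposition \ref{03} it carries an implication $(\_\succ\_)$; by Definition \ref{dim}, $\R\B(j)=(\B(j)\succ j)$ is exactly this implication evaluated at $j$ relative to $Boy(j)$. Also, since $Boy$ is obtained as the closure $cbd_j^{\infty}$ of an inflator on $N(A)$ (Remark \ref{BOYandgab}(3)), we have $j\leq Boy(j)$, which is needed so that the relation $j\lessdot Boy(j)$ is meaningful in the form used.

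Once the setup is clear, the only thing to do is to verify the asserted frame-theoretic equivalence: for elements $a\leq b$ of any frame, $a\lessdot b$ if and only if $(b\succ a)=a$. For the forward direction, I would use the defining adjunction of the implication with $x=(b\succ a)$ to get $(b\succ a)\wedge b\leq a$; the essential-above hypothesis $a\lessdot b$ then forces $(b\succ a)\leq a$, while $a\leq (b\succ a)$ is immediate from $a\wedge b=a\leq a$. For the reverse direction, if $(b\succ a)=a$ and $k$ satisfies $k\wedge b\leq a$, then the adjunction gives $k\leq (b\succ a)=a$, which is precisely the definition of $a\lessdot b$ in Definition \ref{Boy0}.

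Instantiating this equivalence with $a=j$ and $b=Boy(j)$ in the frame $N(A)$, and rewriting the right-hand side using Definition \ref{dim}, yields exactly $\R\B(j)=j\Longleftrightarrow j\lessdot Boy(j)$.

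The main (minor) obstacle is purely notational: one must keep straight that the essentially-above relation $\lessdot$ of Definition \ref{Boy0} is being read \emph{inside the frame $N(A)$}, not inside $A$, so that the frame characterization via the implication applies verbatim; no real computation is needed once this is pointed out.
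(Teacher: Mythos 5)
Your proposal is correct and takes essentially the same route as the paper: the remark is just the frame-theoretic fact $a\lessdot b \Leftrightarrow (b\succ a)=a$ (for $a\leq b$), applied inside the frame $N(A)$ with $a=j$, $b=Boy(j)$, and read off via Definition \ref{dim}. The paper simply \emph{recalls} this equivalence (it was already noted right after Definition \ref{Boy0}) without spelling out the adjunction computation; you supply that verification, and you correctly flag the two minor preconditions the paper leaves implicit — that $j\leq Boy(j)$ holds because $Boy$ is an inflator, and that $\lessdot$ is here the relation on $N(A)$ rather than on $A$.
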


\begin{thm}\label{Boy7}
For an idiom $A$ the following statements are equivalent:
\begin{itemize}
\item[(1)] $N^{2}(A)$ is boolean.
\item[(2)] $\R\B(j)=j$ for all $j\in N(A)$.
\end{itemize}
\end{thm}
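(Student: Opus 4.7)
The plan is to reduce the theorem to Corollary \ref{BOY1}, applied to the frame $N(A)$ viewed as an idiom in its own right. By Remark \ref{Boy66}, condition (2) is equivalent to $j \lessdot \B(j)$ for every $j \in N(A)$; since $(j \succ j) = tp$ in the frame $N(A)$, this in turn says $L\B(j) = tp$ for every $j$, i.e.\ $L\B = Tp$ in $N^{2}(A)$. On the other side, applying Corollary \ref{BOY1} with $A$ replaced by the idiom $N(A)$, condition (1) is equivalent to $j \lessdot Cbd(j)$ for every $j \in N(A)$, i.e.\ $LCbd = Tp$. So the theorem reduces to the equivalence $L\B = Tp \Leftrightarrow LCbd = Tp$.

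For $(2) \Rightarrow (1)$ I would invoke Theorem \ref{dim1} with the stable inflator $S = \B$: the formula $\B = L\B \wedge Cbd$ forces $\B = Cbd$ under the hypothesis $L\B = Tp$, and hence $LCbd = L\B = Tp$. A more elementary route uses only $\B \le Cbd$ together with the easy monotonicity $a \lessdot b$ and $b \le b'$ imply $a \lessdot b'$, which promotes $j \lessdot \B(j)$ directly into $j \lessdot Cbd(j)$.

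The direction $(1) \Rightarrow (2)$ is more delicate because the inequality $\B \le Cbd$ now runs the wrong way for essentially-above to descend. My strategy is to show that on the frame $N(A)$ the two derivatives actually coincide. The key is the distributivity of $N(A)$: the interval $[j, Cbd(j)]$ is by definition the largest complemented interval above $j$, hence (by distributivity) boolean, while Theorem \ref{boyfacts} gives that $[j, \B(j)]$ is also boolean. Taking the complement $c$ of $\B(j)$ inside the boolean interval $[j, Cbd(j)]$, so that $\B(j) \wedge c = j$ and $\B(j) \vee c = Cbd(j)$, and exploiting distributivity of $N(A)$ together with the hypothesis $j \lessdot Cbd(j)$, one should be able to force $c = j$, whence $\B(j) = Cbd(j)$. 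Once $\B = Cbd$ is established on $N(A)$, (1) and (2) become literally the same statement.

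The main obstacle I anticipate is precisely this identification $\B = Cbd$ on $N(A)$ needed for the direction $(1) \Rightarrow (2)$; the remaining steps are routine bookkeeping with Remark \ref{Boy66}, Corollary \ref{BOY1} applied to $N(A)$, and Theorem \ref{dim1}.
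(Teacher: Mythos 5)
Your reduction to the comparison of $L\B$ and $LCbd$ is correct, and the $(2)\Rightarrow(1)$ direction is sound by either of your two routes (the invocation of Theorem \ref{dim1} with $S=\B$ is exactly in the spirit of the paper, and the elementary monotonicity of $\lessdot$ also works). But your $(1)\Rightarrow(2)$ has a genuine gap where you anticipate one, and the complement argument you sketch in fact points in the wrong direction. You take $c$ to be the complement of $\B(j)$ in the boolean interval $[j,Cbd(j)]$, so $\B(j)\wedge c=j$ and $\B(j)\vee c=Cbd(j)$, and hope that distributivity plus $j\lessdot Cbd(j)$ forces $c=j$. That hypothesis says ``$Cbd(j)\wedge y\leq j$ implies $y\leq j$''; but feeding in $y=c$ is vacuous since $c\leq Cbd(j)$ gives $Cbd(j)\wedge c=c$, so the implication reads ``$c\leq j\Rightarrow c\leq j$'' and yields nothing. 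Essentiality of $Cbd(j)$ over $j$ is simply not the tool that kills $c$.

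The paper runs the complement argument in the opposite direction, and this is the step you would have needed to find. From $\B(j)\wedge c=j$, the hypothesis $j\lessdot \B(j)$ (not $j\lessdot Cbd(j)$) immediately forces $c\leq j$, hence $c=j$ and $Cbd(j)=\B(j)\vee c=\B(j)$. That is the one-line observation ``if $j\lessdot \B(j)$ then $Cbd(j)\leq \B(j)$'' appearing in the paper; it establishes $\B=Cbd$ pointwise under hypothesis (2), after which the statement collapses onto the $Cbd$-version (Theorem 4.10 of the cited reference, i.e.\ Corollary \ref{BOY1} applied to the idiom $N(A)$) and the paper defers the remaining bookkeeping there. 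So your obstacle, namely getting $\B=Cbd$, is handled in the paper \emph{from} hypothesis (2), whereas you were trying to extract it from hypothesis (1). If you want to complete $(1)\Rightarrow(2)$ in full detail you should either follow the argument of the cited Theorem 4.10 or supply a separate argument showing that $N^{2}(A)$ boolean collapses $\B$ onto $Cbd$; your present sketch does not do so.
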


\begin{proof}

Just notice that if $j\lessdot Boy(j)$ then $Cbd(j)\leq Boy(j)$ from which $Cbd(j)=Boy(j)$ and in this case we can apply the argument of Theorem 4.10 in \cite{simmonscantor}. 
\end{proof}

Let $j\in N(A)$ be any nucleus. Set:
\[C_{j}=\{a\in A_j \mid a\lessdot cbd_{j}(a)\}.\] 

Observe that, if $C_{j}=A_{j}$ then $N(A_{j})$ is boolean by Theorem \ref{BOY}. In a manner of speaking $C_{j}$ measures the booleaness of the respective assembly. This boolean property is also captured by the following chain 
\[j\leq Boy(j)\leq Boy^{2}(j)\leq Boy^{3}(j)\leq\ldots\leq Boy^{\alpha}(j)\leq\] 
which eventually stabilizes in some ordinal, denote $\infty$ the minimal ordinal such that the chain stabilizes.

\begin{lem}\label{BOY8}
If $A$ is an idiom then \[C_{Boy^{\infty}(j)}=\{1\}.\]
\end{lem}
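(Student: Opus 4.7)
The plan is to reduce the problem to the single key observation, stated at the beginning of Section~\ref{BOYID}, that $Boy(k)=cbd_k^{\infty}$ for any nucleus $k$. Set $k:=Boy^{\infty}(j)$, which by definition satisfies $Boy(k)=k$; I will show this fixed-point property alone forces $C_{k}=\{1\}$. Note that $1\in C_{k}$ is trivial: one has $1\leq cbd_{k}(1)=1$, and the relation $1\lessdot 1$ holds vacuously (as remarked just after Definition~\ref{Boy0}, any element $a$ with $a\lessdot a$ must equal $1$, and $1$ certainly satisfies this).

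For the reverse inclusion, take any $a\in A_{k}$ and assume $a\lessdot cbd_{k}(a)$. The main computation is to identify $cbd_{k}(a)$ with $a$ itself under the standing hypothesis $Boy(k)=k$. Indeed, by the inflator construction described in Remark~\ref{BOYandgab}(3) one has
\[
a\;\leq\;cbd_{k}(a)\;\leq\;cbd_{k}^{\infty}(a)\;=\;Boy(k)(a)\;=\;k(a),
\]
and because $a\in A_{k}$ the right-hand side equals $a$. Hence $cbd_{k}(a)=a$. The assumption $a\lessdot cbd_{k}(a)$ then becomes $a\lessdot a$, which by the observation quoted above forces $a=1$. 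This gives $C_{k}\subseteq\{1\}$, and together with the previous paragraph we conclude $C_{k}=\{1\}$.

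I do not anticipate any genuine obstacle here: once $Boy^{\infty}(j)$ is recognized as a fixed point of $Boy$ and the identity $Boy(k)=cbd_{k}^{\infty}$ from Remark~\ref{BOYandgab} is invoked, the inequality chain collapses $cbd_{k}$ to the identity on $A_{k}$ and the conclusion follows from the trivial but crucial fact that $a\lessdot a\Rightarrow a=1$. The only subtle point worth double-checking is that the stabilization ordinal $\infty$ for the chain $Boy^{\alpha}(j)$ really does produce a genuine fixed point $Boy(k)=k$ (rather than merely $Boy^{\infty+1}(j)=Boy^{\infty}(j)$ in some weaker sense), but this is immediate from the definition of $\infty$ as the least ordinal where the chain stabilizes, combined with the fact that each $Boy^{\alpha}(j)$ is a nucleus on which $Boy$ can again be evaluated.
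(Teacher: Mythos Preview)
Your proof is correct and follows essentially the same route as the paper's: set $k=Boy^{\infty}(j)$, use the fixed-point property $Boy(k)=k$ together with $cbd_{k}\leq Boy(k)$ to collapse $cbd_{k}(a)$ down to $a$ for any $a\in A_{k}$, and then conclude from $a\lessdot a\Rightarrow a=1$. The paper's version is terser (it omits the explicit verification that $1\in C_{k}$ and writes the inequality chain in a single line), but the argument is the same.
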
 

\begin{proof}
If $a\in C_{Boy^{\infty}(j)}$ then 
\[a\lessdot cbd_{Boy^{\infty}(j)}(a)\leq Boy(Boy^{\infty}(j))(a)=Boy^{\infty}(j)(a)=a\]
thus $a=1$.
\end{proof}


With this we observe that:

\begin{prop}\label{BOY9}
With the same notation as above a nucleus $j$ has $B$-dim if and only if $C_{Boy^{\infty}(j)}=A_{Boy^{\infty}(j)}$
\end{prop}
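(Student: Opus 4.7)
The plan is to reformulate the statement in terms of a single nucleus and then reduce to Theorem~\ref{BOY}. Write $k := Boy^{\infty}(j)$. By Definition~\ref{dim2}, the assertion ``$j$ has $B$-dim'' is literally the equation $k = tp$, so the proposition to be proved is the equivalence $k = tp \Longleftrightarrow C_{k} = A_{k}$.

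For the forward implication, assume $k = tp$. Then $A_{k}$ collapses to $\{1\}$, since the only element of $A$ fixed by $tp$ is $1$ itself. Lemma~\ref{BOY8} delivers $C_{k} = \{1\}$, so the two sets coincide.

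For the converse, assume $C_{k} = A_{k}$. By the definition of $C_{k}$, every $a \in A_{k}$ satisfies $a \lessdot cbd_{k}(a)$, which is precisely condition~(3) of Theorem~\ref{BOY} applied to the nucleus $k$. That theorem then yields $Boy(k) = tp$. On the other hand, the ordinal chain $k \leq Boy(k) \leq Boy^{2}(k) \leq \cdots$ used to define $Boy^{\infty}$ stabilises, and the whole point of the index ``$\infty$'' is that $Boy(Boy^{\infty}(j)) = Boy^{\infty}(j)$, i.e.\ $Boy(k) = k$. Combining the two equalities gives $k = tp$, which is the $B$-dim condition for $j$.

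The only step that is not purely bookkeeping is the fixed-point identity $Boy(k) = k$ for $k = Boy^{\infty}(j)$, but this is a standard consequence of the closure construction recalled after Definition~\ref{Id2} (and is already implicit in the proof of Lemma~\ref{BOY8}). Once it is in hand the entire proposition is just Theorem~\ref{BOY} applied to the terminal nucleus in the $Boy$-orbit of $j$, with the forward direction supplied by Lemma~\ref{BOY8}.
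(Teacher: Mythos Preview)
Your argument is correct. The paper does not spell out a proof; the phrase ``With this we observe that'' indicates that the proposition is meant to be an immediate consequence of Lemma~\ref{BOY8}, and indeed a shorter route is available: by Lemma~\ref{BOY8} one has $C_{k}=\{1\}$ unconditionally (for $k=Boy^{\infty}(j)$), so the equation $C_{k}=A_{k}$ is literally equivalent to $A_{k}=\{1\}$, i.e.\ to $k=tp$. This handles both directions in one stroke and avoids the appeal to Theorem~\ref{BOY} and the fixed-point identity $Boy(k)=k$.

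Your converse instead passes through Theorem~\ref{BOY} applied to $k$ to obtain $Boy(k)=tp$, and then uses $Boy(k)=k$ to conclude. This is perfectly valid but is a detour: you are reproving, in effect, the half of Lemma~\ref{BOY8} that you did not use. The benefit of your route is that it makes explicit the role of Theorem~\ref{BOY} and shows that the proposition is really that theorem specialised to the terminal nucleus of the $Boy$-orbit; the paper's intended route is faster because Lemma~\ref{BOY8} has already absorbed that work.
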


\subsection{Cohesive properties for idioms}\label{COH}

Now we will examine the $B$-dim in idioms with ascending chain condition (ACC) on the relation $\lessdot$. To do that we use \emph{cohesive subsets} this notion is introduced in \cite{simmons2014cantor} on frames to study the second level assembly of a frame, here this notion also works fine in the idiom context.

\begin{dfn}\label{coh}
Let $A$ be an idiom. A nonempty subset $\mathcal{K}\subseteq A$ is \emph{cohesive} if for each $a\in\mathcal{K}$ there exists $X\subseteq\mathcal{K}$ such that $a=\bigwedge X$ and $a\lessdot x$ for each $x\in X$.
\end{dfn}

\begin{lem}\label{Boy1}
Suppose that $A$ has ACC on $\lessdot$. Then $\{1\}$ is the only cohesive subset.
\end{lem}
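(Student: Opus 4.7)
The plan is to show that any cohesive $\mathcal{K}\subseteq A$ must be contained in $\{1\}$, from which $\mathcal{K}=\{1\}$ follows by nonemptiness (and one checks that $\{1\}$ is indeed cohesive by taking $X=\{1\}$, since $1\lessdot 1$ holds trivially).

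First I would isolate the following key consequence of cohesiveness: for any $a\in\mathcal{K}$ with $a\neq 1$, there exists $a'\in\mathcal{K}$ with $a<a'$ and $a\lessdot a'$. Indeed, cohesiveness furnishes $X\subseteq\mathcal{K}$ with $a=\bigwedge X$ and $a\lessdot x$ for every $x\in X$. If $X=\emptyset$ then $a=\bigwedge\emptyset=1$, contradiction. If some $x\in X$ satisfies $x=a$, then $a\lessdot a$, which by the earlier observation in Section~\ref{pre} forces $a=1$, again a contradiction. Hence every $x\in X$ satisfies $x>a$ (we already have $a\leq x$ since $a=\bigwedge X$), and any such $x$ serves as the desired $a'$.

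Using this, I would now argue by iteration: starting from any $a_0\in\mathcal{K}$, if $a_0\neq 1$ produce $a_1\in\mathcal{K}$ with $a_0<a_1$ and $a_0\lessdot a_1$; if still $a_1\neq 1$, produce $a_2$, and so on. Either the process terminates because some $a_n=1$, or it yields an infinite strictly ascending chain $a_0<a_1<a_2<\cdots$ with $a_i\lessdot a_{i+1}$ for every $i$. The latter contradicts the hypothesis that $A$ satisfies ACC on $\lessdot$, so the process must have terminated at $a_n=1$; but then tracing back, each step of the construction required $a_i\neq 1$, so in fact $a_0=1$.

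Since $a_0\in\mathcal{K}$ was arbitrary, every element of $\mathcal{K}$ equals $1$, and combining with $\mathcal{K}\neq\emptyset$ gives $\mathcal{K}=\{1\}$. The main obstacle, as usual with this kind of lemma, is the careful case split in the key step: one must rule out both $X=\emptyset$ and the possibility that $a$ itself appears in $X$, since those are precisely the configurations that would allow a cohesive family to "stall" at an element below $1$ without producing a strictly larger companion.
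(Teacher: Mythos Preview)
Your key step is fine and matches the paper's, but the final paragraph contains a genuine gap. You allow the iteration to halt at some $a_n=1$ and then write ``tracing back, each step of the construction required $a_i\neq 1$, so in fact $a_0=1$.'' That is a non sequitur. If $a_0\neq 1$ your key step produces $a_1>a_0$, and nothing you proved prevents $a_1=1$; in that case the process simply stops after one step with $a_0\neq 1$ and $a_1=1$, giving no contradiction with ACC and certainly not forcing $a_0=1$. The same can happen at any finite stage.

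The repair is to strengthen the key step so that the successor element is not only strictly above $a$ but also $\neq 1$; this is exactly what the paper does. Your own case analysis already delivers it: you showed $X\neq\emptyset$ and that no $x\in X$ equals $a$. If in addition every $x\in X$ were equal to $1$, then $X=\{1\}$ and $a=\bigwedge X=1$, contrary to assumption. Hence there exists $a'\in X\subseteq\mathcal{K}$ with $a<a'$, $a\lessdot a'$, and $a'\neq 1$. With this strengthened step the iteration never halts: starting from $a_0\in\mathcal{K}\setminus\{1\}$ one obtains an infinite strictly ascending $\lessdot$-chain inside $\mathcal{K}\setminus\{1\}$, contradicting ACC on $\lessdot$ directly. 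This is how the paper argues (it phrases the contradiction as the chain being forced to stabilise at some $b$ with $b\lessdot b$, hence $b=1$, against $b\neq 1$).
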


\begin{proof}
If $\mathcal{K}\subseteq A$ is cohesive and $\mathcal{K}\neq\{1\}$ then there exists $a\in\mathcal{K}$ such that $a\neq 1$ and $a=\bigwedge X$ with $a\lessdot x$ for all $x\in X$ for some $X\subseteq\mathcal{K}$. Thus $X\neq \{1\}$ that is, there is some $a'\in X$  with $a'\neq 1$ such that $a\lessdot a'$. This produce an ascending $\lessdot$-chain thus by ACC we obtain an element $b\in\mathcal{K}-\{1\}$ such that $b\lessdot b$, that is, $b=1$ a contradiction. 
\end{proof}

\begin{lem}\label{Boy2}
Let $k$ be a nucleus on $A$ such that $Boy(k)=k$. Then $A_{k}$ is cohesive.
\end{lem}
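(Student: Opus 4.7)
The plan is to extract a witnessing family $X$ for each $a\in A_k$ directly from the formula for $cbd_k$ recalled at the start of the section, using the hypothesis $Boy(k)=k$ to force $cbd_k(a)=a$.

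First I would unpack what $Boy(k)=k$ says on elements. By Remark \ref{BOYandgab}(3) we have $Boy(k)=cbd_{k}^{\infty}$. Fix $a\in A_k$, i.e.\ $k(a)=a$. Then
\[a \;\leq\; cbd_k(a)\;\leq\; cbd_k^{\infty}(a)\;=\;Boy(k)(a)\;=\;k(a)\;=\;a,\]
so $cbd_k(a)=a$. This is the key pinch point where the hypothesis enters.

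Next I would apply the formula stated just before Theorem \ref{BOY},
\[cbd_k(a)\;=\;\bigwedge\{k(x)\mid k(a)\lessdot x\},\]
which, since $k(a)=a$, rewrites as $a=\bigwedge\{k(x)\mid a\lessdot x\}$. So the natural candidate is
\[X\;=\;\{k(x)\mid x\in A,\ a\lessdot x\}.\]
Each element of $X$ is a $k$-image, hence lies in $A_k$; and $a=\bigwedge X$ by the above. It remains to verify $a\lessdot k(x)$ for every $x$ with $a\lessdot x$.

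The final step is this verification, which is essentially free: if $a\lessdot x$ and $y\in A$ satisfies $k(x)\wedge y\le a$, then since $x\le k(x)$ we get $x\wedge y\le a$, and so $y\le a$ by $a\lessdot x$. Thus $a\lessdot k(x)$. This exhibits $X\subseteq A_k$ with $a=\bigwedge X$ and $a\lessdot x'$ for all $x'\in X$, showing $A_k$ is cohesive. I do not anticipate a serious obstacle here: once $cbd_k(a)=a$ is established from $Boy(k)=k$, everything follows from the explicit description of $cbd_k$ together with the trivial monotonicity of $\lessdot$ in the upper argument.
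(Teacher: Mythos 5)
Your proposal is correct and follows essentially the same route as the paper: the paper takes $X=\{x\in A_k\mid a\lessdot x\}$, which coincides with your $X=\{k(x)\mid a\lessdot x\}$ once you observe (as you do) that $a\lessdot x$ implies $a\lessdot k(x)$, and both proofs run the chain $a\leq\bigwedge X\leq cbd_k(a)\leq Boy(k)(a)=k(a)=a$. Your write-up just spells out explicitly the verification that $a\lessdot k(x)$, which the paper leaves implicit.
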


\begin{proof}
Let $a\in A_{k}$ and $X\subseteq A_{k}$ be the set of all $x$ such that $a\lessdot x$. Therefore \[a\leq\bigwedge X\leq cbd_{k}(a)\leq Boy(k)(a)=k(a)=a\]
\end{proof}

\begin{cor}\label{Boy3}
Let $A$ be an idiom and denote $k=Boy^{\infty}(j)$. Then $A_{k}$ is cohesive for any $j\in N(A)$
\end{cor}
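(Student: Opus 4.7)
The statement is a direct corollary of Lemma \ref{Boy2}, so the plan is really just to verify that the hypothesis of that lemma is satisfied by $k = Boy^{\infty}(j)$.

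First I would observe that, by construction, $Boy^{\infty}(j)$ is defined as the value at which the ascending chain
\[ j \leq Boy(j) \leq Boy^{2}(j) \leq \cdots \leq Boy^{\alpha}(j) \leq \cdots \]
stabilizes (which occurs for cardinality reasons). In particular, if $\infty$ denotes the minimal stabilization ordinal, then
\[ Boy\bigl(Boy^{\infty}(j)\bigr) = Boy^{\infty+1}(j) = Boy^{\infty}(j), \]
so $k := Boy^{\infty}(j)$ is a fixed point of the prenucleus $Boy$ on $N(A)$.

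Having verified this, Lemma \ref{Boy2} applies verbatim to $k$ and yields that $A_{k}$ is cohesive, which is the desired conclusion.

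I do not anticipate any real obstacle: the entire content is the fixed-point observation, and the cohesive property has already been extracted in Lemma \ref{Boy2}. The only thing worth being slightly careful about is making sure the notation $Boy^{\infty}$ really refers to the stabilization of the $Boy$-iterates on $N(A)$ (and not, say, to the closure of some inflator on $A$ itself), so that equality $Boy(Boy^{\infty}(j)) = Boy^{\infty}(j)$ is literally the defining property of $\infty$.
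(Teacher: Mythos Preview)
Your proposal is correct and matches the paper's approach: the corollary is stated immediately after Lemma \ref{Boy2} with no separate proof, so the intended argument is precisely the one you give---observe that $Boy(Boy^{\infty}(j))=Boy^{\infty}(j)$ by stabilization and invoke Lemma \ref{Boy2}.
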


\begin{lem}\label{Boy4}
If $\mathcal{K}$ is a cohesive subset on $A$ then \[\mathcal{K}\subseteq A_{j}\Rightarrow \mathcal{K}\subseteq A_{Boy^{\infty}(j)}\]
for each $j\in N(A)$.
\end{lem}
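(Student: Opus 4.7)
The plan is to argue by transfinite induction on $\alpha$ that $\mathcal{K}\subseteq A_{Boy^{\alpha}(j)}$, so that specializing to $\alpha=\infty$ yields the statement. Write $k_{\alpha}=Boy^{\alpha}(j)$. The base case $\alpha=0$ is the hypothesis $\mathcal{K}\subseteq A_{j}$. For the successor step, assume $\mathcal{K}\subseteq A_{k_{\alpha}}$ and fix $a\in\mathcal{K}$; cohesiveness supplies $X\subseteq\mathcal{K}$ with $a=\bigwedge X$ and $a\lessdot x$ for every $x\in X$. Using the identity $cbd_{k_{\alpha}}(a)=\bigwedge\{k_{\alpha}(y)\mid k_{\alpha}(a)\lessdot y\}$ recalled at the opening of this section, together with $k_{\alpha}(a)=a$ and $k_{\alpha}(x)=x$ for $x\in X\subseteq A_{k_{\alpha}}$, one obtains
\[
cbd_{k_{\alpha}}(a)\;\leq\;\bigwedge_{x\in X}k_{\alpha}(x)\;=\;\bigwedge X\;=\;a,
\]
hence $cbd_{k_{\alpha}}(a)=a$. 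A routine transfinite induction on $\beta$ then gives $cbd_{k_{\alpha}}^{\beta}(a)=a$ for every $\beta$, so that $Boy(k_{\alpha})(a)=cbd_{k_{\alpha}}^{\infty}(a)=a$ by Remark \ref{BOYandgab}(3), i.e.\ $a\in A_{k_{\alpha+1}}$.

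For a limit ordinal $\lambda$, assume $\mathcal{K}\subseteq A_{k_{\beta}}$ for all $\beta<\lambda$ and define an inflator $d$ on $A$ by $d(x)=\bigvee_{\beta<\lambda}k_{\beta}(x)$. Since $\{k_{\beta}\}_{\beta<\lambda}$ is a chain of nuclei, the IDL axiom for the two directed families $\{k_{\beta}(x)\}$ and $\{k_{\beta}(y)\}$ yields $d(x\wedge y)=\bigvee_{\beta}(k_{\beta}(x)\wedge k_{\beta}(y))=d(x)\wedge d(y)$, so $d$ is a prenucleus and its closure $d^{\infty}$ is a nucleus by Theorem \ref{Id3}. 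Each $k_{\beta}\leq d\leq d^{\infty}$, and if $k$ is any nucleus with $k_{\beta}\leq k$ for all $\beta$, then $d\leq k$ pointwise and transfinite iteration gives $d^{\infty}\leq k$; hence $d^{\infty}$ is the frame sup $\bigvee_{\beta<\lambda}k_{\beta}=k_{\lambda}$ in $N(A)$. For $a\in\mathcal{K}$ the inductive hypothesis gives $d(a)=\bigvee_{\beta}k_{\beta}(a)=a$, and iteration preserves this, so $k_{\lambda}(a)=d^{\infty}(a)=a$, closing the induction.

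The step I expect to require the most care is the limit stage, because the sup of nuclei inside $N(A)$ is in general not the pointwise sup in $A$. The redeeming feature is that we are taking a sup along a chain, which by IDL makes the pointwise supremum $d$ already meet-preserving, so that Theorem \ref{Id3} produces a nucleus and the minimality argument identifies it with $k_{\lambda}$. Once this identification is in hand, the fact that every $a\in\mathcal{K}$ is fixed by each $k_{\beta}$ propagates to $d^{\infty}$ for free, and the induction goes through uniformly at both successor and limit ordinals.
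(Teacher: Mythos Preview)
Your proof is correct and follows essentially the same route as the paper: use cohesiveness together with the formula $cbd_{k}(a)=\bigwedge\{k(x)\mid k(a)\lessdot x\}$ to see that $cbd_{k_\alpha}$ fixes every $a\in\mathcal{K}$, then iterate through the ordinals. The paper's own proof is far terser---it records only the inequality $cbd_{j}(a)\leq\bigwedge\{x\in\mathcal{K}\mid a\lessdot x\}=a$ and leaves both inductions implicit---so your explicit treatment of the limit stage (identifying $k_\lambda$ with $d^{\infty}$ for the pointwise supremum $d$ along the chain, via IDL and Theorem~\ref{Id3}) fills a genuine gap in the exposition rather than departing from it.
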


\begin{proof}
Let $\mathcal{K}$ be cohesive such that $\mathcal{K}\subseteq A_{j}$. Then for all $a\in \mathcal{K}$,
\[cbd_{j}(a)=\bigwedge\{x\in A_{j}\mid a\lessdot x\}\leq\bigwedge\{\ x\in  \mathcal{K}\mid a\lessdot x\}=a.\]
\end{proof}

\begin{thm}\label{Boy5}
Let $A$ be an idiom and $j\in N(A)$. The following statements hold:
\begin{itemize}
\item[(1)] $Boy^{\infty}(j)=tp\Leftrightarrow \{1\}$ is the only cohesive subset of $A_{j}$.

\item[(2)] $A_{Boy^{\infty}(j)}$ is the biggest cohesive subset of $A_{j}$.

\item[(3)] $Boy^{\infty}(j)=j$ if and only if $A_{j}$ is cohesive.
\end{itemize}

\end{thm}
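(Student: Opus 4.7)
The plan is to reduce all three parts to Corollary \ref{Boy3} (which says $A_{Boy^{\infty}(j)}$ is cohesive) and Lemma \ref{Boy4} (which says any cohesive subset of $A_j$ lies inside $A_{Boy^{\infty}(j)}$). The key recurring observation I will use silently is that for nuclei $j\leq k$ on $A$, one has $A_{k}\subseteq A_{j}$, because any $a$ fixed by $k$ satisfies $a=k(a)\geq j(a)\geq a$.

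For part (2) I would combine the two cited results directly: Corollary \ref{Boy3} shows $A_{Boy^{\infty}(j)}$ is itself cohesive, while $j\leq Boy^{\infty}(j)$ gives $A_{Boy^{\infty}(j)}\subseteq A_{j}$, so it \emph{is} a cohesive subset of $A_{j}$. Lemma \ref{Boy4} then certifies that every cohesive $\mathcal{K}\subseteq A_{j}$ is contained in $A_{Boy^{\infty}(j)}$, establishing maximality.

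Part (1) falls out of part (2). For $(\Rightarrow)$, if $Boy^{\infty}(j)=tp$ then $A_{Boy^{\infty}(j)}=\{1\}$, and by Lemma \ref{Boy4} every cohesive subset of $A_{j}$ sits inside $\{1\}$, hence equals $\{1\}$. For $(\Leftarrow)$, apply Corollary \ref{Boy3}: $A_{Boy^{\infty}(j)}$ is cohesive in $A_{j}$, so by hypothesis $A_{Boy^{\infty}(j)}=\{1\}$, forcing $Boy^{\infty}(j)=tp$.

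Part (3) is again a two-line argument from the same ingredients. For $(\Rightarrow)$, if $Boy^{\infty}(j)=j$ then $A_{j}=A_{Boy^{\infty}(j)}$, which is cohesive by Corollary \ref{Boy3}. For $(\Leftarrow)$, if $A_{j}$ is cohesive then Lemma \ref{Boy4} applied with $\mathcal{K}=A_{j}$ yields $A_{j}\subseteq A_{Boy^{\infty}(j)}$, while the reverse inclusion is automatic from $j\leq Boy^{\infty}(j)$; hence $A_{j}=A_{Boy^{\infty}(j)}$, and since a nucleus is determined by its set of fixed points, $j=Boy^{\infty}(j)$. There is no real obstacle here—the whole theorem is a bookkeeping consequence of Corollary \ref{Boy3} and Lemma \ref{Boy4}; the only mild subtlety worth pausing on is the monotonicity remark $j\leq k\Rightarrow A_{k}\subseteq A_{j}$, which is what makes $A_{Boy^{\infty}(j)}$ a subset of $A_{j}$ in the first place.
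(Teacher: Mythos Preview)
Your proof is correct and follows essentially the same approach as the paper: both arguments rest entirely on Corollary \ref{Boy3} and Lemma \ref{Boy4}. The only difference is organizational---you prove (2) first and deduce (1) and (3) from it, whereas the paper proves (1) directly (reproving the content of Lemma \ref{Boy4} inline) and then cites Lemma \ref{Boy4} for (2); your packaging is arguably cleaner, and your explicit remarks that $j\leq k\Rightarrow A_k\subseteq A_j$ and that a nucleus is determined by its fixed points fill in small steps the paper leaves implicit.
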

\begin{proof}

(1) Let $\mathcal{K}\subseteq A_{j}$ be cohesive. It is enough to see that $\mathcal{K}\subseteq A_{Boy^{\infty}(j)}$, if $a\in K$ then \[cbd_{j}(a)=\bigwedge\{x\in A_{j}\mid a\lessdot x\}\leq\bigwedge\{x\in\mathcal{K}\mid a\lessdot x\}=a\] because the cohesive property. Now by induction one can show that $cbd_{j}^{\alpha}(a)=a$ for each ordinal $\alpha$, then $cbd_{j}^{\infty}(a)=Boy(j)(a)=a$. Again an induction argument lead to $Boy^{\alpha}(j)(a)=a$.
If $k=Boy^{\infty}(j)$ and $k=tp$ then $\mathcal{K}\subseteq A_{k}=\{1\}$.

Reciprocally if $\{1\}$ is the only cohesive subset of $A_{j}$ we have that $A_{Boy^{\infty}(j)}$ is cohesive (by Corollary \ref{Boy3}) thus $A_{Boy^{\infty}(j)}=\{1\}$, that is, $Boy^{\infty}(j)=tp$.

(2)  This is an immediate consequence of Lemma \ref{Boy4}.

(3) Put $k=Boy^\infty(j)$. If $j=k$ then $A_{j}$ is cohesive by Corollary \ref{Boy3}. 
Reciprocally if $A_{j}$ is cohesive then $A_{j}\subseteq A_{k}$ therefore $k=j$.
\end{proof}

As a consequence of Lemma \ref{Boy2} and Theorem \ref{Boy5}:
\begin{cor}\label{Boy6}
If $A_{j}$ satisfies ACC on $\lessdot$ then $j$ has $\B$-dim.
\end{cor}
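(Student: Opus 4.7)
The plan is to obtain this as a straightforward combination of Lemma \ref{Boy1} and Theorem \ref{Boy5}(1), applied to the quotient idiom $A_{j}$ rather than to $A$ itself. Recall that for any nucleus $j \in N(A)$, the fixed set $A_{j}$ is again an idiom, so the notions of $\lessdot$, of cohesive subset, and of $Boy$-dimension all make sense internally to $A_{j}$, and moreover the essentiality relation used to define cohesiveness of $\mathcal{K} \subseteq A_{j}$ is precisely the $\lessdot$-relation of $A_{j}$.

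First I would apply Lemma \ref{Boy1} to the idiom $A_{j}$: the ACC hypothesis on $\lessdot$ in $A_{j}$ gives at once that $\{1\}$ is the only cohesive subset of $A_{j}$. (No new argument is required; the proof of Lemma \ref{Boy1} only used that the ambient structure is an idiom satisfying ACC on $\lessdot$, plus the observation that $b \lessdot b$ forces $b = 1$, which holds in any idiom.)

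Next I would invoke Theorem \ref{Boy5}(1), which states that $Boy^{\infty}(j) = tp$ if and only if $\{1\}$ is the only cohesive subset of $A_{j}$. By the previous step, the right-hand side is satisfied, so $Boy^{\infty}(j) = tp$, which by Definition \ref{dim2} is exactly the statement that $j$ has $\B$-dim.

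There is really no main obstacle here, since the heavy lifting has been done in Lemma \ref{Boy1} and Theorem \ref{Boy5}. The only minor subtlety worth spelling out in the final write-up is that Lemma \ref{Boy1} is phrased for a generic idiom $A$, so one should remark explicitly that $A_{j}$ qualifies as such an idiom (via the surjective idiom morphism $j^{*}\colon A \rightarrow A_{j}$ recalled after Theorem \ref{0}) and that cohesive subsets of $A_{j}$ are exactly those satisfying Definition \ref{coh} when it is read inside $A_{j}$.
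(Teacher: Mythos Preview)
Your proof is correct and matches the paper's intent: the corollary is stated there without proof, merely as ``a consequence of Lemma \ref{Boy2} and Theorem \ref{Boy5}''. Your combination of Lemma \ref{Boy1} (applied to the idiom $A_j$) with Theorem \ref{Boy5}(1) is exactly the right unpacking; the paper's citation of Lemma \ref{Boy2} rather than Lemma \ref{Boy1} is either a typo or a reference to the ingredient already absorbed into the converse direction of Theorem \ref{Boy5}(1) via Corollary \ref{Boy3}.
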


All theses statements reassembles the results of \cite{simmonshigher} and the crucial fact that in a frame (a distributive idiom) the pre-nuclei $Cbd$ and $ Boy$ on $N(A)$ coincide.

\subsection{Boyle Dimension for idioms}\label{BOYDIMID}
We conclude this section with some characterizations of idioms with Boyle-dimension.

Let $j$ be a nucleus on $A$, we will give a generalization of a \emph{feebly atomic idiom}.

\begin{dfn}
An interval $[a,b]$ is \emph{feebly atomic} if for each complemented subinterval, say $a\leq c< d\leq b$ there is some $c<z\leq d$ with $[c,z]\in\EuScript{S}mp$. Let $\EuScript{F}\EuScript{A}$ denote the set of all feebly atomic intervals of $A$. The idiom $A$ is \emph{feebly atomic} if $\EuScript{F}\EuScript{A}=\EuScript{I}(A)$. Note that $\EuScript{F}\EuScript{A}$ is a division set (for details see 7.3 of \cite{simmonscantor}).
\end{dfn}

Using the fact: \[\EuScript{S}ct(\EuScript{D}_{j})=\EuScript{G}ab(\EuScript{D}_{j})\cap\EuScript{F}ll(\EuScript{D}_{j}).\]

Consider \[(\EuScript{F}ll(\EuScript{D}_{j})\succ\EuScript{S}ct(\EuScript{D}_{j}))=(\EuScript{F}ll(\EuScript{D}_{j})\succ\EuScript{G}ab(\EuScript{D}_{j}))\cap(\EuScript{F}ll(\EuScript{D}_{j})\succ\EuScript{F}ll(\EuScript{D}_{j}))=\]\[(\EuScript{F}ll(\EuScript{D}_{j})\succ\EuScript{G}ab(\EuScript{D}_{j}))\] the last equality is because in general $(a\succ a)=1$ in any frame. Since $\EuScript{G}ab(\EuScript{D}_{j})$ is a division set, the set $(\EuScript{F}ll(\EuScript{D}_{j})\succ\EuScript{G}ab(\EuScript{D}_{j}))$ is a division set.

\begin{obs}\label{obsss}
We require the following facts.
\begin{itemize}
\item[(1)] From Lemma \ref{div}, each $I\in(\EuScript{F}ll(\EuScript{D}_{j})\succ\EuScript{G}ab(\EuScript{D}_{j}))$ satisfies that any $\EuScript{D}_{j}$-full sub-interval say $[a,b]$ is in $\EuScript{G}ab(\EuScript{D}_{j})$, in other words any $j$-full sub-interval contains $j$-critical intervals.

\item[(2)] The corresponding nucleus of $(\EuScript{F}ll(\EuScript{D}_{j})\succ\EuScript{G}ab(\EuScript{D}_{j}))$, is denoted by $fbl_{j}\in N(A)$.

\end{itemize}
\end{obs}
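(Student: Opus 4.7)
The plan is to verify each of the two items in the remark by directly invoking the machinery set up earlier, namely the implication formula of Lemma \ref{div}, the $\EuScript{D}vs$-description of Theorem \ref{000}, and the bijection between division sets and nuclei of Theorem \ref{00}. There is no substantial new content to prove; the work is mainly bookkeeping, confirming that the hypotheses of the cited results are met and then translating the resulting statements into the informal language used in the remark.

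For item (1), I would apply Lemma \ref{div} with $\EuScript{A}=\EuScript{F}ll(\EuScript{D}_{j})$ and $\EuScript{B}=\EuScript{G}ab(\EuScript{D}_{j})$. The first is a basic set (as noted after Definition \ref{I}) and the second is a division set by construction, so the hypotheses of the lemma are met. The lemma then gives
\[
I\in(\EuScript{F}ll(\EuScript{D}_{j})\succ\EuScript{G}ab(\EuScript{D}_{j}))\iff (\forall J\subseteq I)\bigl[J\in\EuScript{F}ll(\EuScript{D}_{j})\Rightarrow J\in\EuScript{G}ab(\EuScript{D}_{j})\bigr].
\]
To pass from this to the informal phrasing of the remark, I would unwind $\EuScript{G}ab(\EuScript{D}_{j})=\EuScript{D}vs(\EuScript{C}rt(\EuScript{D}_{j}))$ by Theorem \ref{000}: a $j$-full sub-interval $J=[a,b]$ lies in $\EuScript{G}ab(\EuScript{D}_{j})$ exactly when, above every $a\leq x<b$, some step $[x,y]$ is $j$-critical. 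That is precisely the statement that each $j$-full sub-interval ``contains $j$-critical intervals''.

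For item (2), the only thing to observe is that $(\EuScript{F}ll(\EuScript{D}_{j})\succ\EuScript{G}ab(\EuScript{D}_{j}))$ is itself a division set, which was recorded in the paragraph immediately preceding the remark (implications in $\EuScript{B}(A)$ whose codomain is a division set return a division set). By the bijection of Theorem \ref{00}, every division set is of the form $\EuScript{D}_{k}$ for a unique nucleus $k\in N(A)$, and we simply name the nucleus associated to this particular division set $fbl_{j}$. Thus item (2) is a definitional consequence of item (1) combined with Theorem \ref{00}.

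I do not expect a genuine obstacle here: both items are essentially unwindings of earlier results. The only mildly delicate point is making sure the application of Lemma \ref{div} is correctly oriented, i.e.\ that the basic set appears as the antecedent and the division set as the consequent of the implication, which is the version needed to conclude that $(\EuScript{F}ll(\EuScript{D}_{j})\succ\EuScript{G}ab(\EuScript{D}_{j}))$ is again a division set and thus corresponds to a nucleus.
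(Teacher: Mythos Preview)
Your proposal is correct and matches the paper's treatment: the paper presents this as a remark with no separate proof, since both items are immediate consequences of Lemma \ref{div}, Theorem \ref{000}, Theorem \ref{00}, and the preceding paragraph establishing that $(\EuScript{F}ll(\EuScript{D}_{j})\succ\EuScript{G}ab(\EuScript{D}_{j}))$ is a division set. Your unwinding of these references is exactly the intended justification.
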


First we prove a generalization of Theorem 7.17 of \cite{simmonscantor}.

\begin{prop}\label{latortu}
For every nucleus $j\in N(A)$ we have: \[soc_{j}=fbl_{j}\wedge cbd_{j}.\]
\end{prop}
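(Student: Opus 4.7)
The plan is to verify $soc_j(a) = fbl_j(a) \wedge cbd_j(a)$ pointwise, by translating everything to the associated basic sets via Definition \ref{nuclei} and then exploiting the identity of Lemma \ref{0000}, $\EuScript{S}ct(\EuScript{D}_j) = \EuScript{F}ll(\EuScript{D}_j) \cap \EuScript{G}ab(\EuScript{D}_j)$. The generic Heyting-algebra identity $\EuScript{A} \cap (\EuScript{A} \succ \EuScript{B}) = \EuScript{A} \cap \EuScript{B}$ then rewrites the right-hand side at the level of basic sets as $\EuScript{F}ll(\EuScript{D}_j) \cap (\EuScript{F}ll(\EuScript{D}_j) \succ \EuScript{G}ab(\EuScript{D}_j)) = \EuScript{F}ll(\EuScript{D}_j) \cap \EuScript{G}ab(\EuScript{D}_j) = \EuScript{S}ct(\EuScript{D}_j)$, which is the basic-set shadow of the asserted equality.

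For one inequality, I would observe that the inclusions $\EuScript{S}ct(\EuScript{D}_j) \subseteq \EuScript{F}ll(\EuScript{D}_j)$ and $\EuScript{S}ct(\EuScript{D}_j) \subseteq \EuScript{G}ab(\EuScript{D}_j) \subseteq (\EuScript{F}ll(\EuScript{D}_j) \succ \EuScript{G}ab(\EuScript{D}_j))$ — the last being the universal Heyting inequality $b \le (a \succ b)$ — immediately yield $soc_j(a) \le cbd_j(a)$ and $soc_j(a) \le fbl_j(a)$, hence $soc_j \le fbl_j \wedge cbd_j$. For the reverse inequality, I would fix $a \in A$, set $x = cbd_j(a) \wedge fbl_j(a)$, and aim to place $[a,x]$ in $\EuScript{S}ct(\EuScript{D}_j) = \EuScript{F}ll(\EuScript{D}_j) \cap \EuScript{G}ab(\EuScript{D}_j)$. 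The idea is that $x \le cbd_j(a)$ embeds $[a,x]$ as a subinterval of the $j$-full interval $[a, cbd_j(a)]$, so basic-set closure under subintervals gives $[a,x] \in \EuScript{F}ll(\EuScript{D}_j)$; next, $x \le fbl_j(a)$ places $[a,x]$ in the division set $(\EuScript{F}ll(\EuScript{D}_j) \succ \EuScript{G}ab(\EuScript{D}_j))$, and applying Lemma \ref{div} to the subinterval $[a,x]$ of itself, now known to be $j$-full, delivers $[a,x] \in \EuScript{G}ab(\EuScript{D}_j)$.

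The crux is the attainment step $[a, cbd_j(a)] \in \EuScript{F}ll(\EuScript{D}_j)$: this is what promotes the Heyting-algebraic identity on basic sets to a genuine pointwise identity of inflators on $A$, and it hinges on the pre-division character of the $j$-full basic set, a structural fact about $\EuScript{F}ll(\EuScript{D}_j)$ recorded in \cite{simmonsrelative}. Once this is taken for granted, the remainder is direct manipulation of the set inclusions and of the explicit description of the Heyting implication supplied by Lemma \ref{div}.
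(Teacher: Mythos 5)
Your proof is correct and it takes a genuinely different route from the paper's. The paper's argument is essentially a two-liner: it quotes the identity $soc_j = soc_j^\infty \wedge cbd_j$ (Corollary 6.3 of \cite{simmonsrelative}) as a black box, adds the Heyting inequality $soc_j^\infty \leq fbl_j$ (coming from $\EuScript{G}ab(\EuScript{D}_j) \subseteq (\EuScript{F}ll(\EuScript{D}_j) \succ \EuScript{G}ab(\EuScript{D}_j))$), and reads the conclusion off the chain $soc_j \leq fbl_j \wedge cbd_j \leq soc_j^\infty \wedge cbd_j = soc_j$. You bypass that corollary and work directly from Lemma \ref{0000}, $\EuScript{S}ct(\EuScript{D}_j) = \EuScript{F}ll(\EuScript{D}_j) \cap \EuScript{G}ab(\EuScript{D}_j)$, promoting the Heyting law $\mathcal{A} \cap (\mathcal{A}\succ\mathcal{B}) = \mathcal{A}\cap\mathcal{B}$ on $\EuScript{B}(A)$ to a pointwise identity of inflators on $A$. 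You also correctly isolate the one non-formal step that promotion needs, namely that $[a, cbd_j(a)]$ actually lies in $\EuScript{F}ll(\EuScript{D}_j)$; this is the statement that $\EuScript{F}ll(\EuScript{D}_j)$ is pre-division, which one can check directly from the characterisation in Lemma \ref{full} together with the fact that $\EuScript{D}_j$ is a division set. It is worth noting that the middle inequality $fbl_j \wedge cbd_j \leq soc_j^\infty \wedge cbd_j$ in the paper's chain is not a formal consequence of $soc_j^\infty \leq fbl_j$ and rests on precisely this same attainment fact; your version simply surfaces the dependence rather than absorbing it into a citation. The trade-off is that your proof is longer but self-contained, while the paper's is terser at the price of being opaque at the one step that actually carries content.
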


\begin{proof}
It is know that $soc_{j}=soc_{j}^{\infty}\wedge cbd_{j}$ (Corollary 6.3 of \cite{simmonsrelative}) and by general properties of the implication on a frame we have $soc_{j}\leq soc_{j}^{\infty}\leq fbl_{j}$ then \[soc_{j}\leq fbl_{j}\wedge cbd_{j}\leq soc_{j}^{\infty}\wedge cbd_{j}=soc_{j}.\] 
\end{proof}

\begin{thm}\label{rebaba}
Let $A$ be an idiom. The following conditions are equivalent:
\begin{tabular}{l c l}
\emph{(1)} $soc_{j}=cdb_{j}$ & & \emph{(5)} $fbl_{j}=tp$ \\
\emph{(2)} $cbd_{j}\leq soc_{j}^{\infty}$ & & \emph{(6)} $\EuScript{S}ct(\EuScript{D}_{j})=\EuScript{F}ll(\EuScript{D}_{j})$ \\
\emph{(3)} $cbd_{j}\leq fbl_{j}$ & & \emph{(7)} $\EuScript{F}ll(\EuScript{D}_{j})\subseteq\EuScript{G}ab(\EuScript{D}_{j})$ \\
\emph{(4)} $cbd_{j}^{\infty}\leq soc_{j}^{\infty}$ & &  \emph{(8)} $\EuScript{F}ll(\EuScript{D}_{j})\subseteq(\EuScript{F}ll(\EuScript{D}_{j})\succ\EuScript{G}ab(\EuScript{D}_{j}))$
\end{tabular}
\end{thm}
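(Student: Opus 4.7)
The pivot is Proposition \ref{latortu}, which gives the identity $soc_{j} = fbl_{j}\wedge cbd_{j}$, together with the fact (already used in its proof) that $soc_{j}\leq soc_{j}^{\infty}\leq fbl_{j}$. I would first use this identity to chain (1)--(4), and then translate across to the basic-set formulations (5)--(8).

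For (1) $\Leftrightarrow$ (3): from $soc_{j}=fbl_{j}\wedge cbd_{j}$, the equality $soc_{j}=cbd_{j}$ holds iff $cbd_{j}\leq fbl_{j}$. For (1) $\Rightarrow$ (2): $cbd_{j}=soc_{j}\leq soc_{j}^{\infty}$. For (2) $\Rightarrow$ (3): combine with $soc_{j}^{\infty}\leq fbl_{j}$. The equivalence (2) $\Leftrightarrow$ (4) is formal: since $soc_{j}^{\infty}$ is a nucleus (hence a closure operator), iterating $cbd_{j}\leq soc_{j}^{\infty}$ gives $cbd_{j}^{\infty}\leq (soc_{j}^{\infty})^{\infty}=soc_{j}^{\infty}$; conversely $cbd_{j}\leq cbd_{j}^{\infty}$. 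This closes the loop among (1)--(4).

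To bring in (6)--(8) I would invoke the correspondence between basic sets and their associated inflators from Definition \ref{nuclei}. Since $cbd_{j}$ is the inflator of $\EuScript{F}ll(\EuScript{D}_{j})$ and $soc_{j}$ that of $\EuScript{S}ct(\EuScript{D}_{j})$, with the characterizations $[a,b]\in\EuScript{F}ll(\EuScript{D}_{j})\Leftrightarrow b\leq cbd_{j}(a)$ and $[a,b]\in\EuScript{S}ct(\EuScript{D}_{j})\Leftrightarrow b\leq soc_{j}(a)$, the equality of inflators in (1) is equivalent to the equality of basic sets in (6), yielding (1) $\Leftrightarrow$ (6). Lemma \ref{0000} states $\EuScript{S}ct(\EuScript{D}_{j})=\EuScript{G}ab(\EuScript{D}_{j})\cap\EuScript{F}ll(\EuScript{D}_{j})$, so (6) $\Leftrightarrow$ (7) is immediate. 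For (7) $\Leftrightarrow$ (8), I would apply Lemma \ref{div}: one direction is trivial, and the other uses that $\EuScript{F}ll(\EuScript{D}_{j})$ is basic, hence closed under sub-intervals, so taking $J=I$ in Lemma \ref{div} forces $\EuScript{F}ll(\EuScript{D}_{j})\subseteq\EuScript{G}ab(\EuScript{D}_{j})$.

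Finally (5) $\Leftrightarrow$ (7): $fbl_{j}=tp$ means the associated division set $(\EuScript{F}ll(\EuScript{D}_{j})\succ\EuScript{G}ab(\EuScript{D}_{j}))$ equals $\EuScript{I}(A)$; by Lemma \ref{div} this is equivalent to saying every $\EuScript{F}ll(\EuScript{D}_{j})$-subinterval of every interval lies in $\EuScript{G}ab(\EuScript{D}_{j})$, i.e.\ $\EuScript{F}ll(\EuScript{D}_{j})\subseteq\EuScript{G}ab(\EuScript{D}_{j})$. I expect the main subtlety to be the careful bookkeeping in the inflator/basic-set dictionary---specifically, justifying the ``iff'' characterization $[a,b]\in\mathcal{B}\Leftrightarrow b\leq|\mathcal{B}|(a)$ for the sets $\EuScript{F}ll(\EuScript{D}_{j})$ and $\EuScript{S}ct(\EuScript{D}_{j})$, which is what converts inflator-equality into basic-set-equality and underwrites (1) $\Leftrightarrow$ (6). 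Everything else is a short manipulation of the implication in the frame $\EuScript{D}(A)$.
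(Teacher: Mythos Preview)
Your proof is correct and follows essentially the same route as the paper, which also pivots on Proposition~\ref{latortu} together with $soc_{j}\leq soc_{j}^{\infty}\leq fbl_{j}$ to cycle through (1)--(5), and declares the pairings (1)$\Leftrightarrow$(6), (2)$\Leftrightarrow$(7), (3)$\Leftrightarrow$(8) ``immediate'' via the inflator/basic-set correspondence. The subtlety you flag---that $[a,b]\in\EuScript{F}ll(\EuScript{D}_{j})\Leftrightarrow b\leq cbd_{j}(a)$ needs justification---is a fair point the paper glosses over; it holds because $\{x:[a,x]\in\EuScript{F}ll(\EuScript{D}_{j})\}$ is closed under arbitrary joins (verify using Lemma~\ref{full} and that $\EuScript{D}_{j}$ is division), so $[a,cbd_{j}(a)]\in\EuScript{F}ll(\EuScript{D}_{j})$ and closure under subintervals finishes it.
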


\begin{proof}
The equivalences $(1)\Leftrightarrow (6)$, $(2)\Leftrightarrow (7)$, $(3)\Leftrightarrow (8)$ are immediate.

Now $(1)\Rightarrow (2)$ is trivial, $(2)\Rightarrow (3)$ comes from the fact that $soc_{j}^{\infty}\leq fbl_{j}$ and $(3)\Rightarrow (4)$ is clear using Proposition \ref{latortu} and the fact that $fbl_{j}$ is idempotent.

Proposition \ref{latortu} gives $(4)\Rightarrow (5)$, and is obvious that $(5)\Rightarrow (1)$.

For last, the implications $(5)\Rightarrow (6)\Rightarrow (7)$ are clear.
\end{proof}

\begin{dfn}\label{FBL}
Let $j$ be a nucleus on an idiom $A$, we say $A$ is $j$\emph{-Feebly Atomic} if satisfies the conditions of Theorem \ref{rebaba}.
\end{dfn}

It is clear that if a nucleus $j$ has Gabriel dimension then it has Boyle dimension, in a feebly atomic idiom we have a partial converse.

\begin{prop}\label{REB}
Let $j$ be a nucleus on an idiom $A$, suppose that $j$ has Boyle dimension and that $A$ is $j$-feebly atomic, then $j$ has Gabriel dimension.
\end{prop}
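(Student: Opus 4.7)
The strategy is to prove by transfinite induction on $\alpha$ that
\[Gab^{\alpha}(j)=Boy^{\alpha}(j)\]
for every ordinal $\alpha$. Taking $\alpha=\infty$ and invoking the hypothesis $Boy^{\infty}(j)=tp$ then delivers $Gab^{\infty}(j)=tp$, which is precisely the Gabriel dimension of $j$.

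For the base case $Gab^{0}(j)=j=Boy^{0}(j)$ there is nothing to prove. In general, the inequality $Gab(k)\leq Boy(k)$ holds for every nucleus $k$: Lemma \ref{0000} gives $\EuScript{S}ct(\EuScript{D}_{k})\subseteq\EuScript{F}ll(\EuScript{D}_{k})$, hence $soc_{k}\leq cbd_{k}$, and passing to closures yields $Gab(k)=soc_{k}^{\infty}\leq cbd_{k}^{\infty}=Boy(k)$ by Remark \ref{BOYandgab}. Consequently, in the successor step with $k:=Boy^{\alpha}(j)=Gab^{\alpha}(j)$, the desired equality $Gab^{\alpha+1}(j)=Boy^{\alpha+1}(j)$ is equivalent to the reverse inequality $Boy(k)\leq Gab(k)$. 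This in turn is exactly condition $(7)$ of Theorem \ref{rebaba} applied to $k$: once $\EuScript{F}ll(\EuScript{D}_{k})\subseteq\EuScript{G}ab(\EuScript{D}_{k})$ is known, applying $\EuScript{D}vs$ and using that $\EuScript{G}ab(\EuScript{D}_{k})$ is already a division set yields $\EuScript{B}oy(\EuScript{D}_{k})\subseteq\EuScript{G}ab(\EuScript{D}_{k})$, hence $Boy(k)\leq Gab(k)$. So the successor step collapses to the assertion that $A$ is $k$-feebly atomic. The limit step is automatic since both transfinite chains are taken as directed suprema in $N(A)$.

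The whole proof therefore reduces to a single preservation lemma: \emph{if $A$ is $j$-feebly atomic, then $A$ is $Boy^{\alpha}(j)$-feebly atomic for every ordinal $\alpha$}. This is the main obstacle. Naive direct transport fails because enlarging the nucleus from $j$ to $k\geq j$ inflates both $\EuScript{F}ll(\EuScript{D}_{j})$ and $\EuScript{G}ab(\EuScript{D}_{j})$ simultaneously, so the inclusion of condition $(7)$ does not propagate formally. My plan is to address this by a second, nested transfinite induction on $\alpha$: at a successor stage one takes a $Boy^{\alpha+1}(j)$-full interval $[a,b]$ and, exploiting the booleanness of the consecutive Boyle layer $[Boy^{\alpha}(j),Boy^{\alpha+1}(j)]$ (Theorem \ref{boyfacts}) together with the characterization of fullness in Lemma \ref{full}, slices $[a,b]$ into pieces that are full at the $Boy^{\alpha}(j)$-level. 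The inductive hypothesis then supplies $Boy^{\alpha}(j)$-critical (hence $Boy^{\alpha+1}(j)$-critical) subintervals in each such piece, and Theorem \ref{000} reassembles these into a witness that $[a,b]\in\EuScript{G}ab(\EuScript{D}_{Boy^{\alpha+1}(j)})$. Limit ordinals require only the usual bookkeeping with directed suprema. With this preservation lemma in hand, the outer induction closes and the proposition follows.
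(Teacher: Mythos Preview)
Your reduction is sound: showing $Gab^{\alpha}(j)=Boy^{\alpha}(j)$ for all $\alpha$ certainly gives the conclusion, and you are right that the successor step boils down to the $Boy^{\alpha}(j)$-feebly-atomic condition. You have also correctly located the real content in the preservation lemma. The problem is that you have not proved it; you have only announced a plan. Two specific points where the plan is not yet an argument:

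\begin{itemize}
\item In the successor step you assert that a $Boy^{\alpha+1}(j)$-full interval can be ``sliced into pieces that are full at the $Boy^{\alpha}(j)$-level'' using Theorem~\ref{boyfacts} and Lemma~\ref{full}. Nothing in those statements produces such a decomposition. Booleanness of $[Boy^{\alpha}(j),Boy^{\alpha+1}(j)]$ lives in $N(A)$, while the fullness condition you need is about intervals of $A$; you have not explained how complements of nuclei translate into a slicing of $[a,b]$ into $\EuScript{F}ll(\EuScript{D}_{Boy^{\alpha}(j)})$-pieces, nor why the resulting critical subintervals would reassemble via Theorem~\ref{000}.
\item The limit case is not ``usual bookkeeping''. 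From $Boy^{\lambda}(j)=\bigvee_{\alpha<\lambda}Boy^{\alpha}(j)$ it does not follow that a $Boy^{\lambda}(j)$-full interval is $Boy^{\alpha}(j)$-full for some $\alpha<\lambda$, because $\EuScript{F}ll(-)$ need not commute with directed joins of division sets. You would still have to produce critical subintervals inside an arbitrary $\EuScript{F}ll(\EuScript{D}_{Boy^{\lambda}(j)})$-interval, and the inductive hypothesis at earlier stages does not obviously apply.
\end{itemize}

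By contrast, the paper's proof is a single line invoking Theorem~\ref{rebaba} together with $Boy^{\infty}(j)=tp$; the authors treat the passage from $soc_{j}=cbd_{j}$ (equivalently $Gab(j)=Boy(j)$) to $Gab^{\infty}(j)=Boy^{\infty}(j)$ as immediate and do not isolate any preservation step. So your write-up is both much longer than and strategically different from the paper's, and at the same time it leaves unproved precisely the statement on which your whole argument rests. If you want to keep this route, the preservation lemma needs an actual proof, not a sketch; alternatively, you may try to argue directly, as the paper does, that condition~(4) of Theorem~\ref{rebaba} already forces $Boy^{\infty}(j)\leq soc_{j}^{\infty\,\infty}=Gab^{\infty}(j)$ without iterating the feebly-atomic hypothesis.
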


\begin{proof}
This is a direct consequences of Theorem \ref{rebaba} and the fact that $Boy^{\infty}(j)=tp$.
\end{proof}



One of the most important motivation in our investigation comes from ring theory and module theory. Given an associative ring $R$ with unit, let $R\Mod$ be the category of all unital left $R$-modules. There exists various ways to study $R\Mod$, a remarkable one is via its \emph{localizations}. Every localization of a Grothendieck  category (and in particular for $R\Mod$) is given by a \emph{hereditary torsion class}.
If the Gorthendieck  category is a module category, say $R\Mod$ we denote $\mathbb{D}(R)$ the set of all hereditary torsion classes. Every $\mathcal{T}\in\mathbb{D}(R)$ determines a $Hom(\mathcal{T},\_)$-orthogonal class, the \emph{torsion free class}, thus a torsion free class $\mathcal{F}$ is a class of modules closed under isomorphisms, sub-modules, products, extensions and injective hulls (denoted by $E(\_)$). The pair $\tau=(\mathcal{T},\mathcal{F})$ is called a hereditary torsion theory in $R\Mod$, denote the set of all torsion theories on $R\Mod$ by $R\tors$, observe that we can identify $\mathbb{D}(R)$ with $R\tors$. It can be seen that $R\tors$ is a frame ( Proposition 29.1 in \cite{golan1986torsion}). The book \cite{golan1986torsion} is devoted to the study of $R\Mod$ via $R\tors$.

For the definitions of the $\tau$-Gabriel dimension and $\tau$-Boyle dimension in a module category the reader is referred to  \cite{golan1986torsion}, \cite{pelaezmarcela2008}, \cite{golan1988derivatives} and \cite{castro2007some}.

\begin{thm}[\cite{pelaezmarcela2008}]\label{BOYGAB6}
Let $\tau\in R\tors$. The following conditions are equivalent:
\begin{itemize}
\item[(1)] $R$ has left $\tau$-Gabriel dimension.
\item[(2)] $R$ has left $\tau$-Boyle dimension and each $M\in\mathcal{F}_{\tau}$ contains an uniform submodule.
\end{itemize}
\end{thm}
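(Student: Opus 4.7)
The plan is to transfer the statement into the idiomatic framework of this section. A torsion theory $\tau$ determines, for every module $M$, a nucleus $j_{\tau}$ on the submodule idiom $\Lambda(M)$ (namely $j_{\tau}(N)$ is the $\tau$-saturation of $N$ in $M$). Under the correspondence of Theorem \ref{00}, ``$R$ has $\tau$-Gabriel dimension'' becomes $Gab^{\infty}(j_{\tau})=tp$, ``$R$ has $\tau$-Boyle dimension'' becomes $Boy^{\infty}(j_{\tau})=tp$, and a uniform submodule of a torsion-free module $M\in\mathcal{F}_{\tau}$ corresponds (cf.\ Remark \ref{BOYandgab}) to a $j_{\tau}$-critical sub-interval sitting inside a $j_{\tau}$-full interval of $\Lambda(M)$.

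For the direction $(1)\Rightarrow(2)$, the Boyle half is formal: Proposition \ref{latortu} gives $soc_{j_{\tau}}\leq cbd_{j_{\tau}}$, hence $Gab(j_{\tau})\leq Boy(j_{\tau})$ in $N(\Lambda(M))$, and a transfinite induction upgrades this to $Gab^{\infty}(j_{\tau})\leq Boy^{\infty}(j_{\tau})$, so $\tau$-Gabriel dimension forces $\tau$-Boyle dimension. For the uniform submodule statement one uses that any $\tau$-critical module is uniform (if non-zero $N_{1},N_{2}\subseteq M$ satisfied $N_{1}\cap N_{2}=0$, then $M/N_{1}$ would be $\tau$-torsion while $N_{2}\hookrightarrow M/N_{1}$ would contradict the $\tau$-torsion-freeness of $M$), combined with the classical fact that $\tau$-Gabriel dimension produces a $\tau$-critical submodule inside every non-zero $M\in\mathcal{F}_{\tau}$.

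For the non-trivial direction $(2)\Rightarrow(1)$ the plan is to invoke Proposition \ref{REB}: once one shows that the submodule idiom is $j_{\tau}$-feebly atomic, the Boyle dimension hypothesis upgrades to Gabriel dimension. By condition $(7)$ of Theorem \ref{rebaba} this feebly atomic property reduces to $\EuScript{F}ll(\EuScript{D}_{j_{\tau}})\subseteq\EuScript{G}ab(\EuScript{D}_{j_{\tau}})$, i.e., every $j_{\tau}$-full interval $[a,b]$ contains a $j_{\tau}$-critical sub-interval. Given such an $[a,b]$, pass to the $\tau$-torsion-free quotient, apply the uniform submodule hypothesis to obtain $U\subseteq b/a$, and then use the $\tau$-Boyle dimension assumption inside $U$ to descend to a $j_{\tau}$-critical piece.

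The main obstacle is precisely this descent step: uniformity is a one-step local condition, whereas $j_{\tau}$-criticality demands that every proper quotient of the chosen piece be $\tau$-torsion. A uniform module need not itself be $\tau$-critical, so both hypotheses of $(2)$ are genuinely needed. The interplay is governed by Lemma \ref{0000}, namely $\EuScript{S}ct(\EuScript{D}_{j_{\tau}})=\EuScript{G}ab(\EuScript{D}_{j_{\tau}})\cap\EuScript{F}ll(\EuScript{D}_{j_{\tau}})$, and the detailed verification follows the original argument of \cite{pelaezmarcela2008}, now made transparent by the idiomatic reformulation.
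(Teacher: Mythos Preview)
The paper does not actually prove Theorem~\ref{BOYGAB6}; it is quoted from \cite{pelaezmarcela2008} as motivation, and the paper instead proves the purely idiomatic analogue Theorem~\ref{REBE1}. So the natural comparison is between your outline and the proof of Theorem~\ref{REBE1}.

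Your strategy for $(2)\Rightarrow(1)$ differs from the paper's. You aim to verify that the idiom is $j_{\tau}$-feebly atomic and then invoke Proposition~\ref{REB}. The paper instead argues by contradiction via Lemma~\ref{lemo}: assuming $Gab^{\infty}(j)<tp=Boy^{\infty}(j)$, one extracts a $\EuScript{D}_{Gab^{\infty}(j)}$-full interval, passes to a uniform sub-interval (which remains full because $\EuScript{F}ll$ is basic), and then applies Lemma~\ref{lemi} to conclude that this sub-interval is $\EuScript{D}_{Gab^{\infty}(j)}$-critical, contradicting the stability of $Gab^{\infty}(j)$. Lemma~\ref{lemo} is precisely the device that produces a full interval relative to the \emph{correct} nucleus in the Boyle tower; your feebly-atomic route sidesteps this lemma entirely.

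There is a genuine gap in your descent step. You write ``use the $\tau$-Boyle dimension assumption inside $U$ to descend to a $j_{\tau}$-critical piece'', but Boyle dimension plays no role at this point. What is needed is exactly Lemma~\ref{lemi}: a uniform interval that is $\EuScript{D}_{j}$-full is automatically $\EuScript{D}_{j}$-critical. Moreover, to establish $\EuScript{F}ll(\EuScript{D}_{j_{\tau}})\subseteq\EuScript{G}ab(\EuScript{D}_{j_{\tau}})$ via Theorem~\ref{000} you must, for every $a\leq x<b$ with $[a,b]$ full, produce $x<y\leq b$ with $[x,y]$ critical. Your sketch only treats the bottom level $x=a$; the general case requires the observation that either $j(x)\wedge b>x$ (in which case $[x,j(x)\wedge b]\in\EuScript{D}_{j}\subseteq\EuScript{C}rt(\EuScript{D}_{j})$) or $x$ is already $j$-closed in $[a,b]$, whence $b/x\in\mathcal{F}_{\tau}$ and one may apply the uniform hypothesis and Lemma~\ref{lemi}. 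Finally, closing with ``the detailed verification follows the original argument of \cite{pelaezmarcela2008}'' is circular, since that is precisely the reference from which the theorem is being quoted.
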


To prove the above fact in the idiomatic realm, we need the following Lemmas. Recall that a non-trivial interval $[a,b]$ is \emph{uniform} if $x\wedge y=a$ implies either $x=a$ or $y=a$ for all $x, y\in [a,b]$.
\begin{lem}\label{lemo}
Let $j$ be a nucleus on an idiom $A$, consider the family of nuclei \[(Boy^{\alpha}(j)\mid \alpha)\] indexing over the ordinals. If $j\leq k<k'\leq Boy^{\infty}(j)$ where $k$ and $k'$ are nuclei then there exists a $\EuScript{D}_{k}$-full interval $[u,v]$ such that $[u,v]\in\EuScript{D}_{k'}$
\end{lem}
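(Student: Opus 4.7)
The plan is to exploit the frame structure of $N(A)$ and extract a $\EuScript{D}_{k}$-full interval inside $[u,k'(u)]$ for a suitable $u \in A_k$, by first pulling $k'$ down into the Boyle closure of $k$ and then applying Theorem \ref{000} to split off a full piece.

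First I would reduce to the statement $k' \leq Boy^{\infty}(k)$. Monotonicity of $Boy$ gives $Boy^{\infty}(j) \leq Boy^{\infty}(k)$ by a routine transfinite induction, while $k \leq Boy^{\infty}(j)$ together with the fact that $Boy^{\infty}(j)$ is a fixed point of $Boy$ yields $Boy^{\infty}(k) \leq Boy^{\infty}(j)$; hence $Boy^{\infty}(k) = Boy^{\infty}(j)$ and $k' \leq \bigvee_{\alpha} Boy^{\alpha}(k)$ in $N(A)$.

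The clean case is $k' \leq Boy(k)$. Pick $u \in A_{k}$ with $k'(u) > u$, which exists because $k < k'$, and set $v = k'(u)$. Then $v \leq Boy(k)(u)$, so $[u,v] \in \EuScript{D}_{Boy(k)} = \EuScript{D}vs(\EuScript{F}ll(\EuScript{D}_{k}))$ via Theorem \ref{00} and the definition $\EuScript{B}oy(\EuScript{D}_{k}) = \EuScript{D}vs(\EuScript{F}ll(\EuScript{D}_{k}))$. Theorem \ref{000} applied at $x = u$ then produces some $u < y \leq v$ with $[u,y] \in \EuScript{F}ll(\EuScript{D}_{k})$; since $y \leq v = k'(u)$ we also have $[u,y] \in \EuScript{D}_{k'}$, so $[u,y]$ is the required full interval.

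For a general $k'$ I would use the frame distributivity of $N(A)$: the meet $k' \wedge (\_)$ distributes over the directed join $\bigvee_{\alpha} Boy^{\alpha}(k)$, and since meets of nuclei in $N(A)$ are computed pointwise, the strict inequality $k < k'$ forces a least ordinal $\alpha_{0}$ with $k' \wedge Boy^{\alpha_{0}}(k) > k$. If $\alpha_{0} = 1$ the previous paragraph applies verbatim once one sets $v := k'(u) \wedge Boy(k)(u)$ for any $u \in A_{k}$ witnessing the pointwise strict inequality $k'(u) \wedge Boy(k)(u) > u$. The main obstacle is the case $\alpha_{0} > 1$, where $k' \wedge Boy(k) = k$ and a direct application of Theorem \ref{000} produces only a $\EuScript{D}_{Boy^{\alpha_{0}-1}(k)}$-full interval rather than a $\EuScript{D}_{k}$-full one; to close this gap I would transfer fullness downwards along the chain of boolean intervals $[Boy^{\beta}(k), Boy^{\beta+1}(k)]$ in $N(A)$, which are boolean by Theorem \ref{boyfacts}, using Remark \ref{BOYandgab}(4) to identify $\EuScript{F}ll(\EuScript{D}_{k})$-intervals with endpoints in $A_{k}$ as intervals complemented in $A_{k}$, and the fact that $cbd_{k}$ supplies the largest complemented interval above any element of $A_{k}$. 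Reconciling fullness across the different nuclei along the Boyle trajectory is where I expect the technical weight of the argument to lie.
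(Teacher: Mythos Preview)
Your setup is sound: reducing to $k' \leq Boy^{\infty}(k)$, locating the least successor ordinal $\alpha_{0}$ with $k' \wedge Boy^{\alpha_{0}}(k) > k$, and then applying Theorem~\ref{000} to extract a $\EuScript{D}_{Boy^{\alpha_{0}-1}(k)}$-full subinterval $[u,v]$ inside some $[a,b] \in \EuScript{D}_{k'}$ is exactly the right shape, and mirrors the paper's argument (the paper runs the Boyle chain from $j$ rather than $k$, but this is inessential).

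The gap is in how you propose to finish when $\alpha_{0} > 1$. You say you would ``transfer fullness downwards along the chain of boolean intervals $[Boy^{\beta}(k), Boy^{\beta+1}(k)]$'' using Theorem~\ref{boyfacts} and Remark~\ref{BOYandgab}(4). This is both vague and unnecessary: there is no inductive descent through the tower. The point you are missing is that the minimality of $\alpha_{0}$ already gives $k' \wedge Boy^{\alpha_{0}-1}(k) \leq k$, and this single inequality collapses $\EuScript{D}_{Boy^{\alpha_{0}-1}(k)}$-fullness to $\EuScript{D}_{k}$-fullness in one step. Concretely, once $[u,v] \in \EuScript{F}ll(\EuScript{D}_{Boy^{\alpha_{0}-1}(k)})$ with $[u,v] \subseteq [a,b] \in \EuScript{D}_{k'}$, take any $x$ with $u \lessdot x$; by Lemma~\ref{full} we get $[x \wedge v, v] \in \EuScript{D}_{Boy^{\alpha_{0}-1}(k)}$, and since $[x \wedge v, v]$ is a subinterval of $[a,b]$ it also lies in $\EuScript{D}_{k'}$. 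Hence
\[
[x \wedge v, v] \in \EuScript{D}_{k'} \cap \EuScript{D}_{Boy^{\alpha_{0}-1}(k)} = \EuScript{D}_{k' \wedge Boy^{\alpha_{0}-1}(k)} \subseteq \EuScript{D}_{k},
\]
so $[u,v] \in \EuScript{F}ll(\EuScript{D}_{k})$ by Lemma~\ref{full} again. This is precisely the paper's mechanism; no boolean machinery from Theorem~\ref{boyfacts} is invoked at this step.
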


\begin{proof}

From the position of $k$ and $k'$ we can deduce that there exists an ordinal $\alpha$ such that:
\[k'\wedge Boy^{\alpha}(j)\nleq k.\] 
Let $\beta$ be the least ordinal that satisfies the condition above. Observe that  if $\beta$ is a limit ordinal we have \[k'\wedge Boy^{\beta}(j)=k'\wedge (\bigvee\{Boy^{\lambda}(j)\mid\lambda<\beta\})=\bigvee\{k'\wedge Boy^{\lambda}(j)\}\nleq k\] by definition of the chain in the limit case and the frame distributive law, thus $k'\wedge Boy^{\lambda}(j)\nleq k$ for some $\lambda<\beta$ which contradicts the choice of $\beta$, therefore $\beta$ is not a limit ordinal. 

Then we are in the situation:
\[k'\wedge Boy(Boy^{\beta-1}(j))\nleq k\] 
which on division sets it is witnessed by a interval $[a,b]$ such that $[a,b]\in\EuScript{D}_{k'\wedge Boy^{\beta}(j)}$ and $[a,b]\notin\EuScript{D}_{k}$, by the other hand we have \[[a,b]\in\EuScript{D}_{k'}\] thus $[a,b]\notin\EuScript{D}_{Boy^{\beta-1}(j)}$ then the interval $[a, Boy^{\beta-1}(j)(a)\wedge b]\in\EuScript{D}_{Boy^{\beta-1}(j)}$ and thus by Theorem \ref{00} there exists a non-trivial sub-interval $[u,v]\in\EuScript{F}ll(\EuScript{D}_{Boy^{\beta-2}(j)})$ also  $[u,v]\in\EuScript{D}_{k'}$ and then $[u,v]\in\EuScript{D}_{k}$.

Let us see that this interval is $\EuScript{D}_{k}$-full, we will use the equivalence of \ref{full}. 

Consider $x\in A$ such that $u\lessdot x$, then $[x\wedge v,v]\in\EuScript{D}_{Boy^{\beta-2}(j)}$ since $\EuScript{D}_{Boy^{\beta-2}(j)}\leq\EuScript{D}_{Boy^{\beta-1}(j)}$ and from $[u,v]\in\EuScript{D}_{k'}$ we have that $[x\wedge v,v]\in\EuScript{D}_{k'}$ also since $[u,v]\in\EuScript{D}_{k}$ we deduce $[x\wedge v, v]\in\EuScript{D}_{k}$, therefore $[u,v]\in\EuScript{F}ll(\EuScript{D}_{k})$.

\end{proof}

\begin{lem}\label{lemi}
Let $A$ be an idiom, consider any basic set $\EuScript{B}$ on $A$, suppose that we have an uniform interval $[a,b]$ which is $\EuScript{B}$-full. Then $[a,b]\in\EuScript{C}rt(\EuScript{B})$.
\end{lem}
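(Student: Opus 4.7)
The plan is to unwind the three definitions directly and use uniformity to collapse the ``full'' clause into the ``critical'' clause.

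First I would fix $a \leq x \leq b$ and aim to verify the defining condition of $\EuScript{C}rt(\EuScript{B})$, namely that either $x = a$ or $[x,b] \in \EuScript{B}$. By hypothesis $[a,b] \in \EuScript{F}ll(\EuScript{B})$, so the definition of $\mathcal{B}$-full supplies an element $a \leq y \leq b$ with
\[
x \wedge y = a \qquad\text{and}\qquad [x \vee y,\, b] \in \EuScript{B}.
\]

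Now I would invoke uniformity of $[a,b]$. Since $x, y \in [a,b]$ and $x \wedge y = a$, the uniform condition forces $x = a$ or $y = a$. In the first case we are already done. In the second case $y = a$ gives $x \vee y = x$, so the interval $[x \vee y, b]$ is literally $[x,b]$, which lies in $\EuScript{B}$ by the full property; hence the critical condition holds at $x$. Since $x$ was arbitrary, $[a,b] \in \EuScript{C}rt(\EuScript{B})$.

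There is no real obstacle here; the whole argument is a two-line case split once the defining clauses of $\EuScript{F}ll(\EuScript{B})$, $\EuScript{C}rt(\EuScript{B})$, and ``uniform'' are lined up. One could alternatively feed the hypothesis through Lemma \ref{full} and work with $a \lessdot x$ in place of arbitrary $x \in [a,b]$, but that reformulation adds nothing: the direct definition already delivers the witness $y$ whose meet with $x$ is $a$, which is precisely the input uniformity needs.
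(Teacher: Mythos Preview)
Your proof is correct and follows essentially the same approach as the paper: pick $x\in[a,b]$, use the $\EuScript{B}$-full condition to produce $y$ with $x\wedge y=a$ and $[x\vee y,b]\in\EuScript{B}$, then let uniformity force $x=a$ or $y=a$, the latter giving $[x,b]\in\EuScript{B}$.
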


\begin{proof}
Let $[a,b]\in\EuScript{F}ll(\EuScript{B})$ be uniform and consider any $a\leq x\leq b$. Then there exists $a\leq y\leq b$ such that $a=x\wedge y$ and $[x\vee y, b]\in\EuScript{B}$ since $[a,b]$ is $\EuScript{B}$-full. From $a=x\wedge y$ we have $a=x$ or $a=y$ because $[a,b]$ is uniform. Therefore $a=x$ or $[x,b]\in\EuScript{B}$, that is, $[a,b]\in\EuScript{C}rt(\EuScript{B})$.

\end{proof}

With these Lemmas we can give a proof of Theorem \ref{BOYGAB6} in the idiomatic context.
\begin{thm}\label{REBE1}
For a nucleus $j$ on an idiom $A$ the following conditions are equivalent:
\begin{itemize}
\item[(1)]
$j$ has Gabriel dimension.
\item[(2)]
$j$ has Boyle dimension and every interval $[a,b]$ contains a uniform sub-interval.
\end{itemize}
\end{thm}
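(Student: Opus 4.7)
The plan is to leverage the inclusion $\EuScript{C}rt(\EuScript{D}_j)\subseteq\EuScript{F}ll(\EuScript{D}_j)$ (a critical interval is automatically full), which yields $Gab\leq Boy$ on $N(A)$ and hence $Gab^{\infty}\leq Boy^{\infty}$; this handles half of each direction at one stroke. The remaining content comes from Lemmas~\ref{lemo} and~\ref{lemi} combined with Theorem~\ref{000}'s description of $\EuScript{D}vs$.

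For $(1)\Rightarrow(2)$, Boyle dimension is immediate from $Gab^{\infty}(j)=tp$. For the uniform sub-interval, fix a non-trivial $[a,b]$ and let $\alpha$ be the least ordinal with $Gab^{\alpha}(j)(a)\geq b$. A directed-join argument via the idiom distributive law rules out limit $\alpha$, so $\alpha=\beta+1$. Setting $m:=Gab^{\beta}(j)$ and $x:=m(a)\wedge b<b$, Theorem~\ref{000} applied to $[a,b]\in\EuScript{D}_{Gab^{\alpha}(j)}=\EuScript{D}vs(\EuScript{C}rt(\EuScript{D}_m))$ produces $x<y\leq b$ with $[x,y]\in\EuScript{C}rt(\EuScript{D}_m)$. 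One then checks $[x,y]$ is uniform: if $z_1\wedge z_2=x$ with $z_1,z_2>x$, $m$-criticality gives $y\leq m(z_1)\wedge m(z_2)=m(x)=m(a)\wedge m(b)$, and using $y\leq b$ forces $y\leq m(a)\wedge b=x$, absurd.

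For $(2)\Rightarrow(1)$, assume $k:=Gab^{\infty}(j)<tp$ and seek a contradiction. Since $k<tp=Boy^{\infty}(j)$, the limit-ordinal argument of Lemma~\ref{lemo} produces a successor $\alpha'$ minimal with $Boy^{\alpha'}(j)\not\leq k$; set $l:=Boy^{\alpha'-1}(j)\leq k$. Pick a witness $a_0$ of $Boy^{\alpha'}(j)\not\leq k$ and replace it by $a:=k(a_0)\in A_k$. Then $l(a)=a$ (since $l\leq k$), but $Boy(l)(a)=cbd_l^{\infty}(a)>a$; the standard chain argument forces $cbd_l(a)>a$, so some $x>a$ satisfies $[a,x]\in\EuScript{F}ll(\EuScript{D}_l)\subseteq\EuScript{F}ll(\EuScript{D}_k)$. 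This $[a,x]$ is non-trivial, $k$-full, has bottom $a\in A_k$, and lies outside $\EuScript{D}_k$ because $x>k(a)=a$. Applying hypothesis~(2) to $[a,x]$ yields a non-trivial uniform sub-interval $[a,c]$ based at $a$; basicness of $\EuScript{F}ll(\EuScript{D}_k)$ makes it $k$-full, so Lemma~\ref{lemi} places $[a,c]\in\EuScript{C}rt(\EuScript{D}_k)\subseteq\EuScript{G}ab(\EuScript{D}_k)=\EuScript{D}_{Gab(k)}=\EuScript{D}_k$, the last equality because $k$ is a fixed point of $Gab$. Hence $c\leq k(a)=a$, contradicting $c>a$.

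The main obstacle is engineering $[a,x]$ with all four features simultaneously: non-trivial, $k$-full, bottom fixed by $k$, and outside $\EuScript{D}_k$. A direct appeal to Lemma~\ref{lemo} is insufficient because the full interval it manufactures lies inside $\EuScript{D}_k$, which renders Lemma~\ref{lemi} tautological and blocks the contradiction. The fix is to realign the witness by its $k$-closure before invoking $cbd_l$: this lands the bottom in $A_k$ without destroying the Boyle-chain obstruction needed to push above $k$, so that the $cbd_l$-derivative produces a full sub-interval that strictly escapes $\EuScript{D}_k$.
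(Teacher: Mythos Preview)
Your $(1)\Rightarrow(2)$ is correct and considerably more detailed than the paper's bare ``It is immediate''; the limit/successor split and the verification that the chosen $m$-critical interval $[x,y]$ is genuinely uniform are both sound.

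For $(2)\Rightarrow(1)$ you set up a sharper framework than the paper does: the paper simply invokes Lemma~\ref{lemo} once with $k=j$, extracts a single $\EuScript{D}_j$-critical interval via Lemma~\ref{lemi}, and then asserts ``thus $j$ has Gabriel dimension'' without explaining why one critical interval suffices. Your contradiction setup (assume $k=Gab^{\infty}(j)<tp$, locate $l\leq k$ with $Boy(l)\not\leq k$, and manufacture a non-trivial $k$-full $[a,x]$ with $a\in A_k$) is the right skeleton. The gap is the sentence ``Applying hypothesis~(2) to $[a,x]$ yields a non-trivial uniform sub-interval $[a,c]$ based at $a$'': hypothesis~(2) as stated only hands you a uniform sub-interval $[p,q]$ with $a\leq p<q\leq x$, and nothing forces $p=a$. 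If $p>a$ then $[p,q]\in\EuScript{C}rt(\EuScript{D}_k)\subseteq\EuScript{D}_k$ only gives $q\leq k(p)$, which is perfectly compatible with $q>p$ and yields no contradiction. Your own $(1)\Rightarrow(2)$ argument produces a uniform $[x,y]$ with $x=m(a)\wedge b$ possibly strictly above $a$, so you cannot fall back on the stronger ``based at the bottom'' reading of~(2) either. To close the gap you need an extra step: either use the $k$-fullness of $[a,x]$ together with similarity to transport $[p,q]$ to an interval with bottom in $A_k$ (when $[p,q]\notin\EuScript{D}_k$ one checks $k(p)\wedge q=p$, whence $[k(p),k(p)\vee q]\sim[p,q]$ is $k$-critical, non-trivial, and based in $A_k$), or handle the case $[p,q]\in\EuScript{D}_k$ by iterating inside $[a,x]$ relative to the closure $s\mapsto k(s)\wedge x$.
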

\begin{proof}
$(1)\Rightarrow (2)$ It is immediate.

$(2)\Rightarrow (1)$ Suppose that $j$ has $B-dim=\alpha$ then $j<Boy^{\alpha}(j)=tp$. By Lemma \ref{lemo} there exists a $\EuScript{D}_{j}$-full interval containing a proper uniform interval $[a,b]$ which is $\EuScript{D}_{j}$-full. By Lemma \ref{lemi} $[a,b]$ is $\EuScript{D}_{j}$-critical thus $j$ has Gabriel dimension.

\end{proof}

\section{Spectral aspects of idioms}\label{Spectralasc}
In this section we develop a fragment of boolean aspects in the idiom theory. We introduce the concept of \emph{spectral nucleus} and then we mimic some spectral-Grothendieck  situations into the idiomatic shape. We observe that the idiomatic facet of these objects is the external version of the Grothendieck case in particular the module category realm.
First let us recall the definition of spectral category.

\begin{dfn}\label{spec0}

A Grothendieck  category $\EuScript{C}$ is \emph{spectral} if every short exact sequence  in $\EuScript{C}$ splits.
\end{dfn}

From this point, \emph{spectral category} will mean a Grothendieck category which is spectral.

Spectral categories are related with von Neumann regular rings, see \cite[V.6.1]{stenstrom1975rings} and \cite{roos1967locally}.


Before we give the definition of spectral nucleus, we will point out a motivational situation.

\begin{dfn}
Let $R$ be a ring. A nucleus $j\colon\Lambda(R)\rightarrow\Lambda(R)$ is \emph{respectful or linear} if 
\[j(I:r)=(j(I):r)\] for each $I\in\Lambda(R)$ and each $r\in R$. Denote by $\Xi(R)$ the set of linear nuclei on $\Lambda(R)$.
\end{dfn}

\begin{dfn}\label{closure}
A global closure operator on $R\Mod$ is a family $(j_M:\Lambda(M)\to \Lambda(M)\mid M\in R\Mod)$ such that every $j_M$ is a nucleus and for every morphism $f:M\to N$, 
\[f^{-1}j_N=j_Mf^{-1}:\Lambda(N)\to \Lambda(M)\]
\end{dfn}

\begin{thm}\label{lol}
Let $R$ be a ring. There is a bijective correspondence between:
\begin{itemize}
\item[(1)] $R\tors$.
\item[(2)] $\Xi(R)$.
\item[(3)] Global closure operator on $R\Mod$.
\item[(4)] Gabriel Filters of $R$.
\item[(5)] Left exact radicals of $R\Mod$.
\end{itemize}
\end{thm}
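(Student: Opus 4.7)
The plan is to treat the classical trio (1)$\Leftrightarrow$(4)$\Leftrightarrow$(5) as known, citing for instance Chapter VI of \cite{stenstrom1975rings}, and then splice in (2) and (3) via explicit passages to and from (1).

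First I would handle (1)$\Leftrightarrow$(2). Given $\tau\in R\tors$ with associated Gabriel filter $\mathcal{G}_\tau$, define $j_\tau\colon\Lambda(R)\to\Lambda(R)$ by
\[j_\tau(I)=\{r\in R\mid (I:r)\in\mathcal{G}_\tau\}.\]
Verifying that $j_\tau$ is a nucleus is routine from the filter axioms. Linearity, $j_\tau(I:r)=(j_\tau(I):r)$, reduces to the identity $((I:r):s)=(I:sr)$ together with the stability of $\mathcal{G}_\tau$ under $(-:s)$. Conversely, given $j\in\Xi(R)$, set
\[\mathcal{G}_j=\{I\in\Lambda(R)\mid j(I)=R\}.\]
The four Gabriel axioms for $\mathcal{G}_j$ follow directly: transport and meet from monotonicity and multiplicativity of $j$; the residual axiom $(I:r)\in\mathcal{G}_j$ whenever $I\in\mathcal{G}_j$ follows from linearity since $j(I:r)=(j(I):r)=(R:r)=R$; and the transitivity axiom follows from $j^2=j$. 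The two constructions are mutually inverse.

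Next I would establish (1)$\Leftrightarrow$(3). Given $\tau$ with left exact radical $T_\tau$, define a family of nuclei $(j_M^\tau)_{M\in R\Mod}$ by
\[j_M^\tau(N)=\pi_N^{-1}\bigl(T_\tau(M/N)\bigr),\qquad \pi_N\colon M\twoheadrightarrow M/N.\]
Each $j_M^\tau$ is a nucleus because $T_\tau$ is an idempotent, left exact subfunctor of the identity (so it commutes with intersections and its value at $M/N$ lifts to a closure above $N$). For naturality under $f\colon M\to N$, the identity $f^{-1}j_N^\tau=j_M^\tau f^{-1}$ reduces to $T_\tau(\overline{f}^{-1}(X))=\overline{f}^{-1}(T_\tau(X))$ for the induced map on quotients, which is exactly left exactness of $T_\tau$ applied to the pullback diagram. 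Conversely, given a global closure operator $(j_M)$, put $\mathcal{T}=\{M\mid j_M(0)=M\}$; the four closure properties of a hereditary torsion class (subobjects, quotients, extensions, coproducts) follow by chasing the naturality condition along the relevant inclusions, projections, short exact sequences and coproduct injections, using at each step that each $j_M$ is a nucleus.

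Finally, to close the loop I would verify the compatibility square: restricting a global closure operator to the regular module $R$ yields a linear nucleus (linearity follows from naturality against right multiplication $\rho_r\colon R\to R$, $\rho_r(x)=xr$, since $\rho_r^{-1}(J)=(J:r)$), and this assignment sends the family $(j_M^\tau)$ to the nucleus $j_\tau$ of the first paragraph. Reconstructing $(j_M)$ from $j=j_R$ is done via the standard formula $j_M(N)/N=\{m+N\mid (N:m)\in\mathcal{G}_j\}$, and this is forced by naturality under the maps $R\to M/N$, $r\mapsto rm+N$. The main obstacle I anticipate is this last uniqueness/reconstruction step: one must show that a global closure operator is determined by its value at $R$, which requires systematically exploiting naturality against every cyclic submodule map; once that is in place the five sets are identified compatibly.
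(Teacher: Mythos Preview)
The paper does not prove this theorem; it simply defers to \cite{simmonsGtop} and \cite{simmons1988semiring}, and then in the paragraph immediately following records the explicit passages $\Xi(R)\to\{\text{global operators}\}$ via $m\in j_M(K)\Leftrightarrow j(K:m)=R$ and $\Xi(R)\to R\tors$ via $M\in\mathcal{T}_j\Leftrightarrow j_M(0)=M$. Your sketch is therefore strictly more than what the paper provides, and the constructions you write down agree on the nose with those the paper records.

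Your argument is correct and is essentially the standard one. A couple of small remarks: for the transitivity axiom of $\mathcal{G}_j$ you invoke only $j^2=j$, but the actual computation uses linearity as well (from $(I:r)\in\mathcal{G}_j$ for $r\in J$ one gets $(j(I):r)=R$, hence $J\subseteq j(I)$, whence $R=j(J)\leq j^2(I)=j(I)$); and in your reconstruction formula it is cleaner to write $j_M(N)=\{m\in M\mid (N:m)\in\mathcal{G}_j\}$ rather than the quotient form. The ``main obstacle'' you flag---that a global closure operator is determined by $j_R$---is exactly what the paper's recorded formula $m\in j_M(K)\Leftrightarrow j(K:m)=R$ asserts, and your proposed verification via naturality against the cyclic maps $R\to M$, $r\mapsto rm$ is the right way to see it.
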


A proof of this Theorem can be found in \cite{simmonsGtop} and \cite{simmons1988semiring}.  Recall that any localization of the category of $R\Mod$ is given by an element of the frame $R\tors$, hence by Theorem \ref{lol} an element of $\Xi(R)$ is called a \emph{localizer}.

Let us recall the assignations:
\[R\tors\longleftarrow\Xi(R)\longrightarrow\text{Global operators on }R\Mod.\]

Let $j\in\Xi(R)$. 

For each module $M$ set:
\[m\in j_{M}(K)\Leftrightarrow j(K:m)=R\] 
for every $K\in\Lambda(M)$. 

This determines a nucleus on $\Lambda(M)$ and the collection $(j_{M}\mid M\in R\Mod)$ constitutes a global closure operator, in fact $j_{R}=j$.

On the other hand, $j$ defines a torsion class as follows:
\[M\in\mathcal{T}_{j}\Leftrightarrow j_{M}(0)=M.\] 


 
\begin{dfn}
Given a class of modules $\mathcal{B}$ and a module $M$, the {\it slice} of $\mathcal{B}$ by $M$, $\langle M\rangle(\mathcal{B})$ is defined as: 
\[[K,L]\in\langle M\rangle(\mathcal{B})\Leftrightarrow L/K\in\mathcal{B}.\]
\end{dfn} 

It can be seen that if $\mathcal{D}$ is a hereditary torsion class then $ \left\langle M\right\rangle(\mathcal{D})$ is division set in $\Lambda(M)$. \cite{simmonslattice}



We adopt the following definition of spectral torsion theory.
\begin{dfn}\label{espmod}
A hereditary torsion theory $\tau=(\mathcal{T},\mathcal{F})$ \emph{spectral} on a module category $R\Mod$, if the quotient category $(R, \tau)\Mod$ is a spectral category.
\end{dfn}

 Then one can proof the following:

\begin{prop}\label{spectra}
Let $\tau=(\mathcal{T},\mathcal{F})$ be a spectral torsion theory in $R\Mod$. Then for any module $M$ the idiom of sub-objects of $M$ in the quotient category is a complemented idiom.
\end{prop}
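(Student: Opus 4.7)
The plan is to reduce the statement directly to the defining property of spectral Grothendieck categories, namely that every short exact sequence splits. Fix a module $M$ and let $\overline{M}$ denote its image in the quotient category $(R,\tau)\mathrm{-Mod}$. The subobjects of $\overline{M}$ form a complete modular upper-continuous lattice $\Lambda_{\tau}(M)$, which under the standard correspondence for a hereditary torsion theory is realized as the idiom of $\tau$-closed submodules of $M$ (equivalently, the fixed set $\Lambda(M)_{j_{M}}$ for the nucleus $j_{M}$ of Theorem \ref{lol}). So I need only show that the top interval $[0,\overline{M}]$ of $\Lambda_{\tau}(M)$ is complemented in the sense of Definition \ref{I}(2).

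Given any element $\overline{N}\in\Lambda_{\tau}(M)$, I would form the short exact sequence
\[0\longrightarrow \overline{N}\longrightarrow \overline{M}\longrightarrow \overline{M}/\overline{N}\longrightarrow 0\]
inside $(R,\tau)\mathrm{-Mod}$. By the hypothesis that $\tau$ is spectral (Definition \ref{espmod}), the quotient category is a spectral Grothendieck category, so by Definition \ref{spec0} this sequence splits. Let $\overline{N}'\hookrightarrow\overline{M}$ be a splitting complement, so that $\overline{M}=\overline{N}\oplus\overline{N}'$. Then in the subobject lattice of $\overline{M}$ one has $\overline{N}\wedge\overline{N}'=0$ and $\overline{N}\vee\overline{N}'=\overline{M}$, which is precisely the complementation required. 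Since $\overline{N}$ was arbitrary, $\Lambda_{\tau}(M)$ is complemented.

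The only delicate point I anticipate is verifying cleanly that the subobject lattice of $\overline{M}$ in the quotient category really is the idiom $\Lambda(M)_{j_{M}}$ of $\tau$-closed submodules; this is a standard translation via Theorem \ref{lol}, but in the idiomatic reformulation one should state it as: subobjects of $\overline{M}$ correspond bijectively, under join and meet, with elements of the quotient idiom $\Lambda(M)_{j_{M}}$ associated to the nucleus $j_{M}$ attached to $\tau$. Once this identification is in place, the splitting of the short exact sequence in the quotient category translates to the join/meet identities above with no further work, and the idiom is complemented.
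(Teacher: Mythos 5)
The paper states this proposition without proof (it is introduced only by the phrase ``one can prove the following''), so there is no written argument to compare against; your proof is correct and is exactly the natural one being left implicit. Splitting the short exact sequence $0\to\overline{N}\to\overline{M}\to\overline{M}/\overline{N}\to 0$ in the spectral quotient category produces a subobject $\overline{N}'$ with $\overline{N}\wedge\overline{N}'=0$ and $\overline{N}\vee\overline{N}'=\overline{M}$, i.e.\ a lattice complement, and the identification of the subobject lattice of $\overline{M}$ in $(R,\tau)\Mod$ with the quotient idiom $\Lambda(M)_{j_M}$ of $\tau$-closed submodules is the standard Gabriel--Stenstr\"om localization correspondence, which is precisely the background machinery the paper appeals to via Theorem~\ref{lol}.
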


\begin{lem}\label{hl}
Let $j_{\bullet}$ be a global inflator. Then 
\[j_{M/N}\left(\frac{A}{N}\right)1=\frac{j_{M}(A)}{N}\] 
for each $N\subseteq A\subseteq M$
\end{lem}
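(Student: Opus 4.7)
The plan is to exploit the compatibility condition in the definition of a global closure operator (Definition \ref{closure}) applied to the canonical projection $\pi\colon M \to M/N$. The relation $\pi^{-1}j_{M/N} = j_M \pi^{-1}$ should directly pin down $j_{M/N}(A/N)$ in terms of $j_M(A)$, since $\pi^{-1}$ sets up an order-isomorphism between $\Lambda(M/N)$ and the interval $[N,M]\subseteq \Lambda(M)$.

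First, I would observe that since $N \subseteq A$ and $j_M$ is an inflator, $N \subseteq A \subseteq j_M(A)$, so $j_M(A)/N$ is a well-defined submodule of $M/N$. Next, I would compute $\pi^{-1}(A/N) = A$. Applying the global compatibility to $A/N \in \Lambda(M/N)$ gives
\[\pi^{-1}\bigl(j_{M/N}(A/N)\bigr) = j_M\bigl(\pi^{-1}(A/N)\bigr) = j_M(A).\]
Finally, since $j_{M/N}(A/N)$ is a submodule of $M/N$, it has the form $B/N$ for some $N\subseteq B \subseteq M$, and then $\pi^{-1}(B/N) = B$. Combining, $B = j_M(A)$, so $j_{M/N}(A/N) = j_M(A)/N$.

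Everything reduces to the statement about $\pi$; there is no serious obstacle, only the bookkeeping of making sure that $j_M(A) \supseteq N$ (immediate from the inflator property) so that the quotient $j_M(A)/N$ makes sense and the identification $\pi^{-1}(B/N) = B$ applies. The mild typographical nuisance ``$1$'' appearing in the displayed formula of the statement is presumably a stray character and plays no role.
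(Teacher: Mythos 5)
Your argument is correct, and it uses the natural tool: the compatibility condition $f^{-1}j_N = j_M f^{-1}$ from Definition \ref{closure} applied to the canonical projection $\pi\colon M\to M/N$, combined with the lattice (correspondence) isomorphism $\pi^{-1}\colon\Lambda(M/N)\to[N,M]$. The paper in fact states Lemma \ref{hl} without any proof, so there is nothing to compare against; your reading of the stray ``$1$'' as a typographical artifact is also the sensible one.
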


As we mentioned before spectral aspects of Grothendieck categories give rise to certain boolean aspects, in the case of module categories this has been explored in \cite{gomez1985spectral}, \cite{arroyo1994some} and \cite{ose1997spectral}.

\begin{thm}\label{spt1}
Let $j$ be a linear nucleus on $\Lambda(R)$. Then, $\Lambda(R)_{j}$ is a complemented idiom if and only if the hereditary torsion theory with torsion class $\mathcal{T}_{j}$ is spectral.
\end{thm}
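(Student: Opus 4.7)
My plan is to handle the two implications separately, using the triangle of correspondences from Theorem \ref{lol} (torsion theories $\leftrightarrow$ linear nuclei $\leftrightarrow$ global closure operators) as the bridge between idiomatic and module-theoretic language.

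For the direction $(\Leftarrow)$, I would apply Proposition \ref{spectra} directly with $M=R$. The sub-objects of $R$ in the quotient category $(R,\tau)\Mod$ are parameterized by the $j$-closed left ideals of $R$, since two submodules $K_{1},K_{2}\leq R$ yield isomorphic sub-objects in the quotient category if and only if $j(K_{1})=j(K_{2})$. A short check shows this parameterization is actually an isomorphism of idioms: meets agree set-theoretically, and joins in the quotient category are given by $j$ applied to the ambient sum. Hence Proposition \ref{spectra} transports the complementedness across the isomorphism and $\Lambda(R)_{j}$ is complemented.

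For $(\Rightarrow)$, my strategy is to show that complementedness of $\Lambda(R)_{j}$ propagates to $\Lambda(M)_{j_{M}}$ for every $M\in R\Mod$, where $(j_{M})_{M}$ is the global closure operator associated to $j$ by Theorem \ref{lol}. Once this is established, every $\tau$-closed submodule of every module admits a $\tau$-closed complement, so every monomorphism in the quotient category splits, and $(R,\tau)\Mod$ is spectral. The propagation proceeds in two steps. First, using Lemma \ref{hl}, I would reduce to the case of free modules: a surjection $F\twoheadrightarrow M$ induces via $f^{-1}$ a surjective idiom morphism $\Lambda(F)_{j_{F}}\twoheadrightarrow\Lambda(M)_{j_{M}}$, and surjective idiom morphisms preserve complementedness. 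Second, for $F=\bigoplus_{I}R$, I would assemble complements in $\Lambda(F)_{j_{F}}$ componentwise, exploiting the linearity of $j$ (so that $j_{F}$ is compatible with the direct-sum decomposition, via the identity $m\in j_{F}(K)\Leftrightarrow j(K:m)=R$) together with the hypothesis that $\Lambda(R)_{j}$ is complemented.

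The main obstacle I anticipate is the free-module step for infinite index sets. While the finite case is a straightforward block-diagonal assembly, the infinite case requires checking that a componentwise-built pseudo-complement, after closing under $j_{F}$, actually joins with the given submodule to give the top; this relies on a careful use of upper-continuity (IDL) and of linearity of $j$. An alternative, more classical route would bypass this technicality entirely: invoke the general Grothendieck-category fact that a category with complemented subobject lattice on a generator is spectral (see, e.g., \cite{stenstrom1975rings}), and apply it to the image of $R$ as generator of $(R,\tau)\Mod$, whose subobject lattice is precisely $\Lambda(R)_{j}$. I would likely present the argument via this second route for brevity, since the first route duplicates work already encoded in the module-theoretic literature.
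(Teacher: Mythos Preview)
Your $(\Leftarrow)$ direction matches the paper's exactly: apply Proposition~\ref{spectra} with $M=R$ and identify the subobject lattice in the quotient category with $\Lambda(R)_{j}$.

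For $(\Rightarrow)$, your route is genuinely different. The paper does not propagate complementedness to all $\Lambda(M)_{j_{M}}$, nor does it invoke an abstract generator criterion. Instead it uses the characterization of spectral torsion theories from \cite{arroyo1994some} (their Proposition~1.1): $\tau_{j}$ is spectral iff every essential submodule $N$ of every $\tau_{j}$-torsion-free module $M$ has $M/N\in\mathcal{T}_{j}$. This is then verified by a short element-level computation: for $m\in M$ one takes a complement $K\in\Lambda(R)_{j}$ of $j(N:m)$, observes that essentiality of $N$ forces $(N:m)$ to be essential in $R$, so $K=0$, hence $j(N:m)=R$, i.e.\ $m\in j_{M}(N)$. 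This is more elementary and self-contained than either of your two routes; it uses only the complement in $\Lambda(R)_{j}$ of a single ideal at a time, and the well-known fact that annihilator-of-an-element preserves essentiality.

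Your second route (the generator criterion from \cite{stenstrom1975rings}) is correct and conceptually clean, at the cost of importing a nontrivial external result. Your first route is shakier as stated: the ``componentwise'' assembly of complements in $\Lambda(F)_{j_{F}}$ for $F=\bigoplus_{I}R$ does not obviously make sense, since an arbitrary $j_{F}$-closed submodule of $F$ need not decompose along the summands, and you have not indicated what the components of the proposed complement actually are. If you intend to submit this argument rather than the generator shortcut, that step needs a real construction, not just an appeal to linearity and upper-continuity.
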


\begin{proof}
Suppose $\Lambda(R)_{j}$ is a complemented idiom.
Let $\tau_{j}$ be the hereditary torsion theory with hereditary torsion class $\mathcal{T}_{j}$ and $M$ a $\tau_{j}$-torsion free module. Let $N\in\Lambda(M)$ be an essential element. By Proposition 1.1 of \cite{arroyo1994some} it is enough to prove that $M/N\in\mathcal{T}_{j}$. For, we want to prove (using Lemma \ref{hl}) that 
\[j_{\frac{M}{N}}(0)=M/N=\frac{j_{M}(N)}{N}\] 
Let $m\in M$. By hypothesis there exists  $K\in\Lambda(R)_{j}$ such that $K\cap j(N:m)=0$ and $K\vee j(N:m)=R$. Since $N$ is essential in $M$ we have that $(N:m)$ is an essential left ideal and from $(N:m)\cap K\leq j(N:m)\cap K=0$ thus $K=0$. Therefore $j(N:m)=R$, this is equivalent to say $M=j_{M}(N)$ precisely when $M/N\in\mathcal{T}_{j}$.
The converse follows directly from the fact that for this torsion theory $\mathcal{T}_{j}$ the induced nuclei $j_{M}$ in every $\Lambda(M)$ and the corresponding quotient $\Lambda(M)_{j_{M}}$ (which is the idiom of sub-objects of every localizing object on $\mathcal{T}_{j}$) is complemented by Proposition \ref{spectra}, in particular $\Lambda(R)_{j=j_{R}}$ is complemented.
\end{proof}

Theorem \ref{spt1} motivates the following definition:

\begin{dfn}\label{sptid}
Let $A$ be an idiom. A nucleus $j$ on $A$ is \emph{spectral} if $A_{j}$ is a complemented idiom.
\end{dfn}

\begin{obs}\label{obspec}
Let $j$ be an spectral nucleus.

\begin{itemize}
\item[(1)]  This is equivalent to $cbd^{A_{j}}=tp$.
\item[(2)] From Theorem 5.8 of \cite{simmonsrelative} we find out that $Boy(j)=cbd_{j}^{\infty}=tp$ thus $j$ has weak $Boy$-dimension.
\item[(3)]Trivially every spectral nucleus satisfies Theorem \ref{BOY}. The frame $\uparrow(j)$ is a complete boolean algebra, this is straightforward since $[j, Boy(j)]$ is complemented.
\item[(4)] From $soc_{j}=soc_{j}^{\infty}\wedge cbd_{j}$ (Corollary 6.3 of \cite{simmonsrelative}) we deduce that $soc[j]=soc[j]^{\infty}$.
\item[(5)] Let $\EuScript{E}(A)$ the set of all spectral nuclei of $A$, observe that is an upper section of $N(A)$. 
\end{itemize}
\end{obs}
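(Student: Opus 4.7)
The plan is to reduce items (1)--(4) to the single structural fact that $cbd_{j}=tp$ whenever $j$ is spectral, from which each of those claims is a one-line consequence; item (5) then needs a short separate argument that projects complements through $k$.

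For (1), $cbd^{A_{j}}(a)$ is by definition the top of the largest complemented interval of $A_{j}$ above $a$. A complemented modular lattice is automatically relatively complemented: given $a\leq x\leq b$ in $A_{j}$ and a complement $y$ of $x$ in the whole of $A_{j}$, the element $(a\vee y)\wedge b$ is a complement of $x$ in $[a,b]$ by the modular law. Hence $A_{j}$ complemented gives $cbd^{A_{j}}(a)=1$ for every $a$, and the converse is immediate from $cbd^{A_{j}}(0)=1$. Now for the core claim $cbd_{j}=tp$, I invoke the formula $cbd_{j}(a)=\bigwedge\{j(x)\mid j(a)\lessdot x\}$ of Theorem 6.5 of \cite{simmonsrelative}. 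Replacing $a$ by $j(a)$ reduces to the case $a\in A_{j}$, and I claim every $j(x)$ in the indexing set equals $1$. The argument has two steps: (i) if $a\lessdot x$ in $A$ with $a\in A_{j}$, then $a\lessdot j(x)$ in $A_{j}$, since any $y\in A_{j}$ with $j(x)\wedge y\leq a$ also satisfies $x\wedge y\leq a$ and hence $y\leq a$; and (ii) in the complemented modular lattice $A_{j}$ the essentially-above relation collapses, because if $b<1$ has a nonzero complement $c$ then $b\wedge c=0\leq a$ but $c\not\leq a$ (as $c\leq a$ would force $c\leq a\wedge c=0$). Combining (i) and (ii) forces $j(x)=1$, so $cbd_{j}(a)=1$.

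With $cbd_{j}=tp$ in hand, the remaining items drop out. (2) Remark \ref{BOYandgab}(3) gives $Boy(j)=cbd_{j}^{\infty}=tp$, so $j$ has weak $Boy$-dimension. (3) Theorem \ref{BOY} then yields $N(A_{j})$ boolean, and the identification $[j,tp]\cong N(A_{j})$ furnished by Theorem \ref{boyfacts} promotes this to $\uparrow(j)$. (4) Substitute $cbd_{j}=tp$ into the identity $soc_{j}=soc_{j}^{\infty}\wedge cbd_{j}$ of Corollary 6.3 of \cite{simmonsrelative} to get $soc_{j}=soc_{j}^{\infty}$. Finally, for (5), suppose $j\leq k$ with $j$ spectral; first note $A_{k}\subseteq A_{j}$ (if $k(a)=a$ then $a\leq j(a)\leq k(a)=a$). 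Given $a\leq u\leq b$ in $A_{k}$, relative complementation in the complemented idiom $A_{j}$ yields $y\in A_{j}$ with $a\leq y\leq b$, $u\wedge y=a$ and $j(u\vee y)=b$. Then $k(y)\in[a,b]$ lies in $A_{k}$; since nuclei preserve binary meets, $u\wedge k(y)=k(u)\wedge k(y)=k(u\wedge y)=k(a)=a$, and $k(u\vee k(y))\geq k(u\vee y)\geq j(u\vee y)=b$ while $k(u\vee k(y))\leq k(b)=b$, so the join of $u$ and $k(y)$ in $A_{k}$ is $b$. Hence $k$ is spectral and $\EuScript{E}(A)$ is upward-closed.

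The main obstacle is step (ii) together with its transport from $A_{j}$ back to $A$ in step (i); once $\lessdot$ is seen to trivialize in $A_{j}$ and the transport is verified, the rest is bookkeeping with the quotient-nucleus correspondence of Theorem \ref{00} and the standing formulas for $cbd_{j}$, $soc_{j}$, and $Boy(j)$.
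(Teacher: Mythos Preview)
Your proposal is correct and follows the same line as the paper's inline justifications for this Remark, but with considerably more detail supplied. The paper simply cites Theorem~5.8 of \cite{simmonsrelative} for $Boy(j)=cbd_{j}^{\infty}=tp$ and asserts item~(5) without argument; you instead derive the stronger fact $cbd_{j}=tp$ directly from the formula $cbd_{j}(a)=\bigwedge\{j(x)\mid j(a)\lessdot x\}$ by showing that the essentially-above relation collapses in a complemented modular lattice, and you give an explicit complement-pushing argument for the upper-section claim in (5). Two minor cosmetic points: in your step~(ii) the bottom of $A_{j}$ is $j(0)$ rather than $0$, and the isomorphism $\uparrow(j)\cong N(A_{j})$ is stated just after Theorem~\ref{boyfacts} rather than in it; neither affects the validity of your argument.
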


The conclusion of Theorem \ref{spt1} in particular one of the equivalent conditions of Proposition 1.1 of \cite{arroyo1994some} or Proposition 2.2 of \cite{ose1997spectral} implies that for spectral torsion theories, the torsion free modules are full modules. We will prove this fact in the idiomatic case.
Recall that for any interval $[a,b]$, $\chi(a,b)$ denotes the nucleus on $A$ given by $j\leq\chi(a,b)\Leftrightarrow j(a)\wedge b=a$. This is the idiomatic analogue of the cogenerated torsion theory for a module (see \cite{simmons2010decomposition} for details and uses of these nuclei).

\begin{prop}\label{lolyu}
Let $A$ be an idiom and $j\in N(A)$ a nucleus. The following statements are equivalent:
\begin{itemize}
\item[(1)] $j$ is spectral.
\item[(2)] For all $[a,b]$ with $j\leq\chi(a,b)$ we have $[a,b]\in\EuScript{F}ll(\EuScript{D}_{j})$.
\end{itemize}
\end{prop}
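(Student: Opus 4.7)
The plan is to prove both implications by exploiting modularity of the ambient idiom together with the complementation behaviour inside the quotient $A_j$.

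For $(1)\Rightarrow(2)$: if $A_j$ is complemented, then, being a modular complemented lattice, it is relatively complemented, so every sub-interval of $A_j$ is itself complemented. Given $[a,b]$ with $j(a)\wedge b = a$ and any $a\leq x\leq b$, I would look at $j(x)$ inside the complemented interval $[j(a),j(b)]\subseteq A_j$ and pick a complement $w$ there, where the join in $A_j$ is $j(-\vee -)$; so $j(x)\wedge w = j(a)$ and $j(j(x)\vee w) = j(b)$. The natural candidate to pull back down to $[a,b]$ is $y := w\wedge b$. The hypothesis $j(a)\wedge b = a$ places $y$ in $[a,b]$, and the meet $x\wedge y$ collapses to $a$ by combining $x\wedge w\leq j(x)\wedge w = j(a)$ with $x\wedge w\leq b$. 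The join computation is the only delicate point: since $x\leq b$, modularity gives $x\vee y = (x\vee w)\wedge b$, and then the standard nucleus identity $j(x\vee w) = j(j(x)\vee w)$, together with the fact that $j$ preserves binary meets, yields $j(x\vee y) = j(b)$. This exhibits $[a,b]\in\EuScript{F}ll(\EuScript{D}_j)$.

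For $(2)\Rightarrow(1)$: the trivial interval $[j(0),1]$ satisfies $j(j(0))\wedge 1 = j(0)$ unconditionally, so $j\leq \chi(j(0),1)$ and (2) forces $[j(0),1]\in\EuScript{F}ll(\EuScript{D}_j)$. Any $u\in A_j$ sits in $[j(0),1]$, so fullness delivers $y\in A$ with $u\wedge y = j(0)$ and $j(u\vee y) = 1$. I would then close up by replacing $y$ with $v := j(y)\in A_j$: the identity $u\wedge v = j(u)\wedge j(y) = j(u\wedge y) = j(j(0)) = j(0)$ and $j(u\vee v) = j(u\vee j(y)) = j(u\vee y) = 1$ show that $v$ is a complement of $u$ inside $A_j$, whence $A_j$ is complemented.

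The main obstacle is the modular manipulation in $(1)\Rightarrow(2)$: one has to transfer a complement that naturally lives in $A_j$ back down to a genuine complement in the sub-interval $[a,b]\subseteq A$, and the assignment $y = w\wedge b$ only produces the required data because of modularity together with the fact that $j$ commutes with finite meets. Everything else is routine bookkeeping with nuclei.
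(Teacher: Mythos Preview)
Your proof is correct. The direction $(2)\Rightarrow(1)$ is essentially identical to the paper's argument: both apply the hypothesis to the interval $[j(0),1]$ and then close the witness under $j$ to land in $A_j$.

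For $(1)\Rightarrow(2)$ you take a genuinely different route. The paper invokes Lemma~\ref{full}, i.e.\ the characterisation of $\EuScript{F}ll(\EuScript{D}_j)$ via the essentially-above relation: given $x$ with $a\lessdot x$, it first argues that $j(a)=a$ and $a\lessdot j(x)$, then takes a \emph{global} complement $z$ of $j(x)$ in $A_j$ (so $j(x)\wedge z=j(0)$, $j(x)\vee z=1$), and uses $a\lessdot j(x)$ to force $z\leq a$ and hence $j(x)=1$, giving $[b\wedge x,b]\in\EuScript{D}_j$. You instead work directly with the defining clause of $\EuScript{F}ll$: you take a \emph{relative} complement $w$ of $j(x)$ in the subinterval $[j(a),j(b)]\subseteq A_j$ and pull it back to $y=w\wedge b\in[a,b]$, checking $x\wedge y=a$ and $[x\vee y,b]\in\EuScript{D}_j$ by the modularity identity $x\vee(w\wedge b)=(x\vee w)\wedge b$ together with $j(x\vee w)=j(j(x)\vee w)$. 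Your argument is more self-contained (no appeal to Lemma~\ref{full}) and makes the role of modularity explicit; the paper's argument, on the other hand, ties the result to the $\lessdot$ machinery that drives the Boyle-derivative analysis elsewhere in the paper.
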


\begin{proof}
Suppose (1) we will use Lemma \ref{full}. Let $[a,b]$ be a non-trivial interval with $j\leq\chi(a,b)$ and let $x\in A$ be such that $a\lessdot x$. From $a\leq b\wedge x\leq b$ we have that $\chi(a,b\wedge x)(a)\wedge b\wedge x=a$ thus $\chi(a,b\wedge x)(a)=a$ (since $b\wedge x$ is essential in $[a,b]$). Therefore $j(a)=a$ and then $a\lessdot j(x)$. Now from (1) there exists $z\in A_{j}$ such that $j(x)\wedge z=j(0)$ and $j(x)\vee z=1$. Since $a\lessdot j(x)$ and $j(0)\leq a$ then $z\leq a$. Hence, 
\[j(x)=j(x)\vee a\geq j(x)\vee z=1.\]
Thus $j(b\wedge x)=j(b)\wedge j(x)=j(b)\geq b$, that is, $[b\wedge x,b]\in\EuScript{D}_{j}$.

Reciprocally, note that the interval $[j(0),1]$ satisfies $j\leq\chi(j(0),1)$. By hypothesis, for every $a\in A_{j}$ there exists $b\in[j(0),1]$ such that $j(0)=a\wedge b=a\wedge j(b)$ and $[a\vee b, 1]\in\EuScript{D}_{j}$, this is equivalent to $1=a\vee_{j} j(b)$ (where $\vee_{j}$ is the supremum in $A_{j}$). Therefore every element in $A_{j}$ has a complement as required.
\end{proof}




Next we will see an important property of the negation of a spectral nucleus. The following Proposition uses the Lemma \ref{0000}.

\begin{prop}\label{lolya}

Let $j$ be an spectral nucleus. For every $[a,b]\in\EuScript{D}_{\neg j}=\neg\EuScript{D}_{j}$ the following assertions hold:
\begin{itemize}
\item[(1)] $[a,b]\in\EuScript{C}mp(\EuScript{O})$.
\item[(2)] There exists $y\in [a,b]$ such that $[y,b]\in\EuScript{S}ct(\EuScript{D}_{j})$.
\end{itemize}
\end{prop}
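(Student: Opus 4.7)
The plan is to unfold the condition $[a,b]\in\neg\EuScript{D}_j$ into a concrete statement about its subintervals, feed the spectrality hypothesis through Proposition~\ref{lolyu} for part~(1), and then exploit the complemented structure obtained in~(1) together with a modular-similarity trick for part~(2).

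\textbf{For (1).} Using Lemma~\ref{div} I would first rewrite $[a,b]\in\neg\EuScript{D}_j=(\EuScript{D}_j\succ\EuScript{O})$ as: every $\EuScript{D}_j$-subinterval of $[a,b]$ is trivial. Applying this to the subinterval $[a,\,j(a)\wedge b]$ (which is a $\EuScript{D}_j$-interval because $j(a)\wedge b\leq j(a)$) forces $j(a)\wedge b=a$, i.e.\ $j\leq\chi(a,b)$. Spectrality of $j$ and Proposition~\ref{lolyu} then give $[a,b]\in\EuScript{F}ll(\EuScript{D}_j)$. For any $x\in[a,b]$ the full property supplies $y\in[a,b]$ with $x\wedge y=a$ and $[x\vee y,b]\in\EuScript{D}_j$; but $[x\vee y,b]$ is a $\EuScript{D}_j$-subinterval of $[a,b]\in\neg\EuScript{D}_j$, so it must be trivial, whence $x\vee y=b$. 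Thus $[a,b]\in\EuScript{C}mp(\EuScript{O})$.

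\textbf{For (2).} With $[a,b]$ now complemented by~(1), set $s=soc_j(a)\wedge b$. Since $s\leq soc_j(a)$, the characterization $[c,d]\in\EuScript{S}ct(\EuScript{D}_j)\Leftrightarrow d\leq soc_j(c)$ yields $[a,s]\in\EuScript{S}ct(\EuScript{D}_j)$. Pick a complement $y$ of $s$ in $[a,b]$, so that $y\wedge s=a$ and $y\vee s=b$. The modular law then exhibits the similarity $[y,b]=[y,y\vee s]\sim[y\wedge s,s]=[a,s]$. Because $\EuScript{S}ct(\EuScript{D}_j)=\EuScript{G}ab(\EuScript{D}_j)\cap\EuScript{F}ll(\EuScript{D}_j)$ by Lemma~\ref{0000} is an intersection of basic sets, it is itself basic and therefore abstract (closed under similarity); this forces $[y,b]\in\EuScript{S}ct(\EuScript{D}_j)$, as required.

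\textbf{Main obstacle.} The delicate point is the choice of $y$ in~(2). A direct attempt such as taking $y=a$ and asking that $[a,b]$ itself be semicritical would demand the existence of sufficiently many critical subintervals of $[a,b]$, something spectrality alone does not guarantee without a chain condition. The resolution is to isolate the ``already-semicritical'' bottom-slice $[a,s]$ cut out by $soc_j(a)$, and then transport it to a top-slice $[y,b]$ by complementing $s$ inside the complemented interval produced by~(1); modular similarity together with abstractness of $\EuScript{S}ct(\EuScript{D}_j)$ is what makes this transport legal.
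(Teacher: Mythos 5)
Your proof is correct. Part~(1) follows the paper's argument exactly: you derive $j\leq\chi(a,b)$ from $[a,j(a)\wedge b]\in\EuScript{D}_j\cap\neg\EuScript{D}_j=\EuScript{O}$, invoke Proposition~\ref{lolyu} to get $[a,b]\in\EuScript{F}ll(\EuScript{D}_j)$, and then observe that each witnessing $\EuScript{D}_j$-subinterval $[x\vee y,b]$ of $[a,b]$ is trivial.

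For part~(2) you take a genuinely different route. The paper works with the nucleus $soc_j^\infty$: it applies $\EuScript{D}_j$-fullness to $x=b\wedge soc_j^\infty(a)$ to get $y$ with $x\wedge y=a$ and $[x\vee y,b]\in\EuScript{D}_j$, shows $x\vee y=b$, pushes this through a modularity-plus-monotonicity computation to obtain $b\leq soc_j^\infty(y)$, hence $[y,b]\in\EuScript{G}ab(\EuScript{D}_j)$, and only then invokes Lemma~\ref{0000} together with closure of $\EuScript{F}ll(\EuScript{D}_j)$ under subintervals to conclude $[y,b]\in\EuScript{S}ct(\EuScript{D}_j)$. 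You instead work with the pre-nucleus $soc_j$: the slice $[a,s]$ with $s=soc_j(a)\wedge b$ is semicritical outright by the characterization $[c,d]\in\EuScript{S}ct(\EuScript{D}_j)\Leftrightarrow d\leq soc_j(c)$, and rather than computing a membership in $\EuScript{G}ab(\EuScript{D}_j)$ you complement $s$ inside $[a,b]$ (which you may do by part~(1)) and transport $[a,s]\sim[y,b]$ using that $\EuScript{S}ct(\EuScript{D}_j)$ is abstract. That abstractness does need a justification — your appeal to Lemma~\ref{0000}, which writes $\EuScript{S}ct(\EuScript{D}_j)$ as an intersection of a division set with a basic set, is a perfectly good one. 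Your version is more economical, bypassing the explicit passage through $\EuScript{G}ab(\EuScript{D}_j)$ and the monotonicity step; the paper's version has the mild advantage of recording the containment $[y,b]\in\EuScript{G}ab(\EuScript{D}_j)$ along the way. Both ultimately rest on part~(1), the socle, modularity of $A$, and Lemma~\ref{0000}.
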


\begin{proof}

Consider any interval $[a,b]\in\EuScript{D}_{\neg j}=\neg\EuScript{D}_{j}$. Observe that $j\leq\chi(a,b)$ since $[a, j(a)\wedge b]\in\EuScript{D}_{j}\cap\neg\EuScript{D}_{j}=\EuScript{O}$, hence $[a,b]\in\EuScript{F}ll(\EuScript{D}_{j})$  by Proposition \ref{lolyu}. 

For (1), consider any $a\leq x\leq b$ then there exists $a\leq y\leq b$ such that $a=x\wedge y$ and $[x\vee y,b]\in\EuScript{D}_{j}$ also this interval is a sub-interval of $[a,b]$ then  $[x\vee y,b]\in\EuScript{D}_{j}\cap\neg\EuScript{D}_{j}=\EuScript{O}$ that is, $x\vee y=b$.

Now for assertion (2) consider $a\leq b\wedge soc_{j}^{\infty}(a)\leq b$. For this element we can find $a\leq y \leq b$ such that $a=y\wedge b\wedge soc_{j}^{\infty}(a)=y\wedge soc_{j}^{\infty}(a)$ and $[y\vee (b\wedge soc_{j}^{\infty}(a)),b]\in\EuScript{D}_{j}$. Let us see that the element $y$ satisfies our requirements. First this interval is a sub-interval of $[a,b]$ thus $y\vee (b\wedge soc_{j}^{\infty}(a))=b$ and by modularity we have that $b\wedge(y\vee soc_{j}^{\infty}(a))=b$, that is, $b\leq y\vee soc_{j}^{\infty}(a)$. Therefore $b\leq soc_{j}^{\infty}(y)$ which is equivalent to $[y,b]\in\EuScript{G}ab(\EuScript{D}_{j})$. By Lemma 6.4 of \cite{simmonsrelative} we have $[y,b]\in\EuScript{S}ct(\EuScript{D}_{j})$, as required.
\end{proof}

A consequence of this Proposition is the following:

\begin{cor}\label{rebsss}

Let $j\in\EuScript{E}(A)$. Then \[\EuScript{B}oy(\EuScript{D}_{\neg j})=\EuScript{B}oy(\EuScript{C}mp).\] which is equivalent to \[cdb^{\infty}_{\neg j}=cbd^{\infty}.\]

\end{cor}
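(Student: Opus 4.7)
The plan is to translate the statement from division sets over to nuclei on $N(A)$, and then exploit that $Boy$ preserves binary meets. Using the frame isomorphism of Theorem \ref{00} together with the identification $Boy(k)=cbd_{k}^{\infty}$ from Remark \ref{BOYandgab}(3), both displayed equalities are equivalent to
\[ Boy(\neg j) \;=\; Boy(id) \]
in the frame $N(A)$. Here one uses that $\EuScript{D}_{id}=\EuScript{O}$ and $\EuScript{F}ll(\EuScript{O})=\EuScript{C}mp$, so the symbol $\EuScript{B}oy(\EuScript{C}mp)$ corresponds under this dictionary to the nucleus $Boy(id)=cbd^{\infty}$.

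The key observation is that $Boy\colon N(A)\rightarrow N(A)$ is a pre-nucleus on the frame $N(A)$ (as stated just before Remark \ref{BOYandgab}), so it preserves binary meets. The frame $N(A)$ has bottom $id$, and in any frame the Heyting pseudo-complement satisfies $j\wedge \neg j=id$. Applying $Boy$ to this identity therefore yields
\[ Boy(id) \;=\; Boy(j\wedge \neg j) \;=\; Boy(j)\wedge Boy(\neg j). \]

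Finally, the hypothesis $j\in \EuScript{E}(A)$ combined with Observation \ref{obspec}(2) gives $Boy(j)=tp$, so
\[ Boy(id) \;=\; tp\wedge Boy(\neg j) \;=\; Boy(\neg j), \]
which is the desired equality. I do not foresee a substantive obstacle here: the proof is essentially a one-liner, packaging the spectral hypothesis (which collapses $Boy(j)$ to the top via Observation \ref{obspec}(2)) through the meet-preservation property of the pre-nucleus $Boy$. The only verification worth spelling out carefully in the write-up is the translation step in the first paragraph, so that the reader sees why $\EuScript{B}oy(\EuScript{C}mp)$ is the division set attached to the nucleus $Boy(id)$.
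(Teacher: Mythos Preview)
Your argument is correct and is, in fact, tighter than the paper's own one-line justification. The paper simply writes ``This is immediate from the fact that $\EuScript{D}_{\neg j}\subseteq\EuScript{C}mp$,'' invoking Proposition~\ref{lolya}(1). That inclusion, together with $\EuScript{O}\subseteq\EuScript{D}_{\neg j}$, gives $id\leq \neg j\leq cbd\leq cbd^{\infty}=Boy(id)$, and hence $Boy(id)\leq Boy(\neg j)$ by monotonicity; but the reverse inequality $Boy(\neg j)\leq Boy(id)$ does not follow from monotonicity alone, since $Boy$ is only a pre-nucleus and need not be idempotent. So the paper's hint leaves exactly the nontrivial direction unexplained.

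Your route sidesteps this entirely: instead of comparing $\neg j$ with $cbd$, you exploit that $Boy$ is a pre-nucleus on $N(A)$ and therefore preserves binary meets, combine this with the Heyting identity $j\wedge\neg j=id$, and then collapse $Boy(j)$ to $tp$ via Observation~\ref{obspec}(2). This yields both inequalities at once and uses nothing beyond what the paper has already established. The translation you give in the first paragraph (identifying $\EuScript{B}oy(\EuScript{C}mp)$ with the division set of $Boy(id)=cbd^{\infty}$ through $\EuScript{C}mp=\EuScript{F}ll(\EuScript{O})$) is the right way to read the somewhat informal notation in the statement, and is worth keeping explicit in the write-up as you suggest.
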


\begin{proof}
This is immediate from the fact that $\EuScript{D}_{\neg j}\subseteq\EuScript{C}mp.$
\end{proof}

\begin{dfn}\label{WA}
An interval $[a,b]$ on an idiom $A$ is \emph{weakly atomic}, if for every $a\leq c\leq d\leq b$ there exists $c\leq x<y\leq d$ with $[x,y]\in\EuScript{S}mp$. Denote by $\EuScript{W}\EuScript{A}$ the set of all weakly atomic intervals.
\end{dfn}

In \cite{simmonscantor} it is proved that $\EuScript{W}\EuScript{A}$ is a division se, moreover, this set has the property: \[\EuScript{C}mp\cap\EuScript{W}\EuScript{A}=\EuScript{S}\EuScript{S}p\] Theorem 7.9 in \cite{simmonscantor}.

\begin{cor}\label{rebb}
Let $A$ be an idiom and $j\in\EuScript{E}(A)$ then every $[a,b]\in\EuScript{D}_{\neg j}\cap\EuScript{W}\EuScript{A}$ is semi-simple.
\end{cor}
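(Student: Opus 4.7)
The plan is to reduce the statement directly to the already-established identity $\EuScript{C}mp\cap\EuScript{W}\EuScript{A}=\EuScript{S}\EuScript{S}p$ from Theorem 7.9 of \cite{simmonscantor}. The main observation is that Proposition \ref{lolya}(1) has already done most of the work: it shows that every interval in $\EuScript{D}_{\neg j}$ is in $\EuScript{C}mp(\EuScript{O})=\EuScript{C}mp$ whenever $j$ is spectral. So $\EuScript{D}_{\neg j}\subseteq\EuScript{C}mp$.

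From here the argument is a one-line intersection. Given $[a,b]\in\EuScript{D}_{\neg j}\cap\EuScript{W}\EuScript{A}$, the inclusion $\EuScript{D}_{\neg j}\subseteq\EuScript{C}mp$ yields
\[
[a,b]\in\EuScript{C}mp\cap\EuScript{W}\EuScript{A}=\EuScript{S}\EuScript{S}p,
\]
which is exactly the claim that $[a,b]$ is semi-simple.

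There is essentially no obstacle here: the substantive content is contained in Proposition \ref{lolya}(1), which in turn relied on Proposition \ref{lolyu} and the spectrality of $j$. The present corollary merely intersects that inclusion with the division set $\EuScript{W}\EuScript{A}$ and applies the Simmons identity. If anything, the only small check worth being explicit about in the written version is that we are using part (1) of Proposition \ref{lolya} (complementedness) rather than part (2), since it is the complemented conclusion that pairs with weak atomicity to produce semi-simplicity.
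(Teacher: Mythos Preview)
Your proposal is correct and follows essentially the same route as the paper: invoke Proposition \ref{lolya}(1) to obtain $\EuScript{D}_{\neg j}\subseteq\EuScript{C}mp$, then intersect with $\EuScript{W}\EuScript{A}$ and apply the identity $\EuScript{C}mp\cap\EuScript{W}\EuScript{A}=\EuScript{S}\EuScript{S}p$ from \cite{simmonscantor}. There is no substantive difference between your argument and the paper's.
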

\begin{proof}
From Proposition \ref{lolya}.(1), we have that any $[a,b]\in\EuScript{D}_{\neg j}\cap\EuScript{W}\EuScript{A}$ is complemented. Therefore $\EuScript{D}_{\neg j}\cap\EuScript{W}\EuScript{A}\subseteq\EuScript{C}mp\cap\EuScript{W}\EuScript{A}=\EuScript{S}\EuScript{S}p$. 

\end{proof}

\begin{cor}\label{reb}
Let $A$ be a compactly generated idiom, and $j\in\EuScript{E}(A)$, then any interval $[a,b]\in\EuScript{D}_{\neg j}$ is semi-simple.
\end{cor}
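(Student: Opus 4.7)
The plan is to reduce to Corollary \ref{rebb}, which already guarantees that every $[a,b] \in \EuScript{D}_{\neg j} \cap \EuScript{W}\EuScript{A}$ is semi-simple. Hence it suffices to prove that in a compactly generated idiom every interval is weakly atomic; then $\EuScript{D}_{\neg j} \cap \EuScript{W}\EuScript{A} = \EuScript{D}_{\neg j}$ and the conclusion follows at once.

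To verify weak atomicity of an arbitrary $[a,b]$, I would fix $a \leq c < d \leq b$ and exhibit a simple subinterval of $[c,d]$. Compact generation provides a compact element $k \in A$ with $k \leq d$ and $k \not\leq c$, so $c < c \vee k \leq d$. Consider the poset
\[
T = \{x \in A : c \leq x < c \vee k\},
\]
which is nonempty since $c \in T$. Any chain $\{x_i\} \subseteq T$ has its join $x = \bigvee x_i$ again in $T$: otherwise $x = c \vee k$, and compactness of $k$ together with directedness of the chain would force $k \leq x_i$ for some $i$, contradicting $x_i < c \vee k$. By Zorn's lemma $T$ has a maximal element $m$, and $[m, c \vee k]$ is simple, since any $y$ with $m < y \leq c \vee k$ cannot lie in $T$ and therefore must equal $c \vee k$. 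This simple interval sits inside $[c,d] \subseteq [a,b]$, proving weak atomicity.

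Combining the two steps, since every interval of $A$ is weakly atomic, so is the given $[a,b] \in \EuScript{D}_{\neg j}$, and Corollary \ref{rebb} concludes that $[a,b]$ is semi-simple. The only nontrivial ingredient is the Zorn step, which rests crucially on compact generation for the treatment of chains. This is the main (but routine) obstacle; once it is in place, the result is just the combination of Corollary \ref{rebb} with the identity $\EuScript{W}\EuScript{A} = \EuScript{I}(A)$ valid in compactly generated idioms.
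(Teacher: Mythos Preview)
Your proposal is correct and follows the same route as the paper: reduce to Corollary \ref{rebb} via the fact that a compactly generated idiom is weakly atomic. The paper simply cites this fact (Theorem 7.8 of \cite{simmonscantor}), whereas you supply a direct Zorn-based proof of it; your argument for weak atomicity is the standard one and goes through without issues.
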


\begin{proof}
If $A$ is compactly generated then is weakly atomic (see Theorem 7.8 of \cite{simmonscantor}) therefore this is a direct consequence of Proposition \ref{rebb}.
\end{proof}

The lattice $\Lambda(M)$ is compactly generated for every module $M$, thus the above facts resemble the module theoretic environment.

The following Theorem is the idiomatic analogue of Lemma 2.12 in \cite{arroyo1994some}. 

\begin{thm}\label{rebab}
Let $j$ be a nucleus on $A$ such that $Boy(j)=tp$. Then $\neg j\vee \neg\neg j=tp$.
\end{thm}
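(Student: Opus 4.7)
The plan is to leverage the hypothesis $Boy(j)=tp$ via Theorem \ref{BOY} (together with Theorem \ref{boyfacts}), which tells us that the upper section $[j,tp]\cong N(A_{j})$ is a complete boolean algebra. Setting $t:=\neg j \vee \neg\neg j$, I would note that $t\geq \neg\neg j\geq j$, so $t\in [j,tp]$. The strategy is to compute the pseudocomplement of $t$ inside the boolean interval $[j,tp]$ and show it equals the bottom element $j$; booleanness of $[j,tp]$ then promotes that pseudocomplement to a genuine complement and forces $t=tp$.

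Concretely, the pseudocomplement of $t$ in $[j,tp]$ is the largest $y\geq j$ with $y\wedge t\leq j$ (equivalently, with $y\wedge t=j$, since both factors are already $\geq j$ in $N(A)$). Suppose $y\in[j,tp]$ has this property. From $y\wedge \neg j\leq y\wedge t=j$ and $y\wedge \neg j\leq \neg j$, combined with $j\wedge \neg j=0$ in $N(A)$, we get $y\wedge \neg j=0$; the universal property of $\neg\neg j=(\neg j\succ 0)$ in the frame $N(A)$ then gives $y\leq \neg\neg j$. On the other hand, $y\wedge \neg\neg j\leq y\wedge t=j$, so substituting $y\leq \neg\neg j$ yields $y=y\wedge \neg\neg j\leq j$, and combined with $y\geq j$ we conclude $y=j$.

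Hence the pseudocomplement of $t$ inside $[j,tp]$ is the bottom element $j$. Since $[j,tp]$ is boolean, the pseudocomplement is a complement, so $t\vee j=tp$ in $[j,tp]$; but $t\geq j$, so $t=tp$, which is the desired equality $\neg j\vee \neg\neg j=tp$.

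The main obstacle is resisting the temptation to argue in $N(A)$ directly — which is only a frame, not a boolean algebra in general — and instead relocating all the work to the boolean sub-frame $[j,tp]$. The role of the hypothesis $Boy(j)=tp$ is precisely to make that sub-frame boolean, which is what upgrades the easy frame-theoretic observation that $\neg j\vee \neg\neg j$ is dense (its negation in $N(A)$ is $\neg\neg j\wedge \neg j=0$) to strict equality with $tp$.
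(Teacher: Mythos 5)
Your proof is correct, and it takes a genuinely different (and arguably cleaner) route than the paper's. The paper also works in the boolean interval $[j,tp]$, but it proceeds by choosing an abstract complement $l$ of $t=\neg j\vee\neg\neg j$ there, then computes in $N(A)_{\neg\neg}$ (using that $\neg\neg$ is a nucleus so it preserves meets, and that $\neg j$ complements $\neg\neg j$ in the double-negation quotient) to deduce $\neg\neg l=\neg\neg j$, whence $tp = t\vee l \leq t\vee\neg\neg l = t\vee\neg\neg j = t$. You instead bypass $N(A)_{\neg\neg}$ entirely: you compute the pseudocomplement of $t$ in $[j,tp]$ directly, using only the ambient frame identities $j\wedge\neg j = 0$ and $\neg\neg j=(\neg j\succ 0)$ in $N(A)$ to pin it down as the bottom element $j$, and then let booleanness of $[j,tp]$ upgrade pseudocomplement to complement, forcing $t\vee j=tp$ and hence $t=tp$. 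Both proofs hinge on the same use of the hypothesis ($Boy(j)=tp$ makes $[j,tp]$ a complete boolean algebra), but yours avoids the detour through the double-negation sublocale and the formula for joins there, trading that for a short universal-property calculation; the paper's version, in exchange, exhibits explicitly how the complement in $[j,tp]$ relates to $\neg\neg j$, which may be of independent interest.
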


\begin{proof}
Recall that $N(A)_{\neg\neg}$ is a complete boolean algebra then the element 
\[\neg\neg j\in N(A)_{\neg\neg}\] 
has a unique complement there and this complement is $\neg j$. Recall that the suprema in the quotient $N(A)_{\neg\neg}$ is describe as: \[\neg\neg(\neg\neg k\vee\neg\neg k')\] for any $k, k'\in N(A)$ and in our case we have \[\neg\neg(\neg\neg j\vee\neg j)=tp\]

Now under the hypothesis $[j, Boy(j)]=[j, tp]$ is a complete boolean algebra, then for the nucleus $\neg\neg j\vee\neg j$ there exists $l\in [j, tp]$ such that $(\neg\neg j\vee\neg j)\vee l=tp$ and $(\neg\neg j\vee\neg j)\wedge l=j$. Applying $\neg\neg(\_)$ to the last equality
\[\neg\neg(\neg\neg j\vee\neg j)\wedge\neg\neg l=tp\wedge\neg\neg l=\neg\neg l=\neg\neg j\]
and using $(\neg\neg j\vee\neg j)\vee l=tp$ we have 
\[tp=(\neg\neg j\vee\neg j)\vee l\leq(\neg\neg j\vee\neg j)\vee\neg\neg l=(\neg\neg j\vee\neg j)\vee\neg\neg j=\neg\neg j\vee\neg j=tp\] 
as required.

\end{proof}

As a consequence of the above we have:
\begin{cor}\label{rebt}
Let $j\in\EuScript{E}(A)$ then $\neg j\vee \neg\neg j=tp$.
\end{cor}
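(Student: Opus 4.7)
The plan is to reduce Corollary \ref{rebt} directly to Theorem \ref{rebab}. Since $j \in \EuScript{E}(A)$ means $j$ is a spectral nucleus, the quotient idiom $A_{j}$ is complemented by Definition \ref{sptid}. I would invoke Remark \ref{obspec}(2), which records the consequence that for any spectral nucleus $j$ one has $Boy(j) = cbd_{j}^{\infty} = tp$; this is exactly the hypothesis required by Theorem \ref{rebab}.

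Having confirmed the hypothesis, I would simply apply Theorem \ref{rebab} to the nucleus $j$ to conclude that $\neg j \vee \neg\neg j = tp$ in $N(A)$. No further work is necessary: the combinatorial content (the manipulation of the boolean structure on the interval $[j, tp]$ together with the double negation $\neg\neg$) is already encapsulated in Theorem \ref{rebab}, and the translation from ``spectral'' to ``has $Boy(j) = tp$'' is precisely the observation in \ref{obspec}(2).

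Thus the only conceptual ingredient is recognizing that spectrality is a strictly stronger condition than the hypothesis of Theorem \ref{rebab}. There is no genuine obstacle here; the corollary is a one-line consequence once the equivalence between spectrality and $cbd_{j}^{\infty} = tp$ (equivalently, $[j,tp]$ being a complete boolean algebra) is in hand, which has already been established earlier in the paper.
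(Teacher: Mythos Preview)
Your proposal is correct and matches the paper's approach exactly: the paper presents the corollary with the phrase ``As a consequence of the above we have,'' relying (implicitly) on Remark~\ref{obspec}(2) to pass from $j\in\EuScript{E}(A)$ to $Boy(j)=tp$ and then invoking Theorem~\ref{rebab}. One small wording issue: in your closing sentence you refer to an ``equivalence'' between spectrality and $cbd_{j}^{\infty}=tp$, but only the forward implication is established (and is all you need); otherwise the argument is fine.
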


\begin{cor}\label{rebs}
Let $j$ be a nucleus such that $j, \neg j\in\EuScript{E}(A)$ then the intervals of \[\EuScript{D}_{\neg j}\cap\EuScript{W}\EuScript{A}\] and \[\EuScript{D}_{\neg\neg j}\cap\EuScript{W}\EuScript{A}\] are semi-atomic.
\end{cor}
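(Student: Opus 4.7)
The plan is to reduce the corollary to two applications of Corollary \ref{rebb}, which asserts that for any spectral nucleus $k \in \EuScript{E}(A)$, every interval $[a,b] \in \EuScript{D}_{\neg k} \cap \EuScript{W}\EuScript{A}$ is semi-simple. The hypothesis supplies two distinct spectral nuclei, namely $j$ and $\neg j$, and the conclusion concerns exactly the two division sets $\EuScript{D}_{\neg j}$ and $\EuScript{D}_{\neg \neg j}$, so the matching is immediate.

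First I would take $k = j$ in Corollary \ref{rebb}. Since $j \in \EuScript{E}(A)$ by assumption, this directly gives that every interval in $\EuScript{D}_{\neg j} \cap \EuScript{W}\EuScript{A}$ is semi-simple (equivalently, semi-atomic, in the terminology of the statement), which settles the first half.

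Next, I would take $k = \neg j$, which is licit because $\neg j \in \EuScript{E}(A)$ by the second hypothesis. Corollary \ref{rebb} then yields that every interval in $\EuScript{D}_{\neg(\neg j)} \cap \EuScript{W}\EuScript{A}$ is semi-simple. Since $\neg(\neg j) = \neg\neg j$ by definition of the pseudo-complement in the frame $N(A)$, this is precisely the second assertion.

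There is no genuine obstacle here; the corollary is a packaging of Corollary \ref{rebb} applied to the two spectral nuclei furnished by the hypothesis, and the only notational point to verify is the identification $\EuScript{D}_{\neg(\neg j)} = \EuScript{D}_{\neg\neg j}$, which is tautological.
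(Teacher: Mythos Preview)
Your argument is correct for the statement as written, and the core step---applying Proposition~\ref{lolya}(1) (packaged as Corollary~\ref{rebb}) once with $k=j$ and once with $k=\neg j$---is exactly what the paper does in its first two lines. One small correction: semi-simple and semi-atomic are not ``equivalent''; rather $\EuScript{S}\EuScript{S}p\subseteq\EuScript{D}vs(\EuScript{S}\EuScript{S}p)=\EuScript{S}\EuScript{A}$, so semi-simple \emph{implies} semi-atomic, which is the direction you need.

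Where your proof and the paper's differ is that the paper does not stop there. After obtaining $(\EuScript{D}_{\neg j}\cup\EuScript{D}_{\neg\neg j})\cap\EuScript{W}\EuScript{A}\subseteq\EuScript{S}\EuScript{S}p$, it applies $\EuScript{D}vs(\_)$ and invokes Theorem~\ref{rebab} (using that $j$ spectral gives $Boy(j)=tp$, hence $\neg j\vee\neg\neg j=tp$) to conclude the genuinely stronger fact $\EuScript{W}\EuScript{A}\subseteq\EuScript{S}\EuScript{A}$. This extra conclusion is not part of the corollary's literal statement, but it is precisely what Corollary~\ref{rebss} cites when it says ``This follows directly from Corollary~\ref{rebs}.'' Your proof, while adequate for the statement, would leave Corollary~\ref{rebss} unsupported; the paper's longer argument is doing double duty.
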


\begin{proof}

By Proposition \ref{lolya}.(1) we have \[\EuScript{D}_{\neg j}\cap\EuScript{W}\EuScript{A}\subseteq\EuScript{W}\EuScript{A}\cap\EuScript{C}mp=\EuScript{S}\EuScript{S}p\] and by the spectral property of $\neg j$ we also have \[\EuScript{D}_{\neg\neg j}\cap\EuScript{W}\EuScript{A}\subseteq\EuScript{W}\EuScript{A}\cap\EuScript{C}mp=\EuScript{S}\EuScript{S}p\] then \[(\EuScript{D}_{\neg j}\cup\EuScript{D}_{\neg\neg j})\cap\EuScript{W}\EuScript{A}\subseteq\EuScript{S}\EuScript{S}p\] using the frame distributivity law of $\EuScript{B}(A)$. Taking division sets we have  \[\EuScript{D}vs(\EuScript{D}_{\neg j}\cup\EuScript{D}_{\neg\neg j})\cap\EuScript{W}\EuScript{A}=\EuScript{W}\EuScript{A}\subseteq\EuScript{D}vs(\EuScript{S}\EuScript{S}p)=\EuScript{S}\EuScript{A}\] the first equality follows from Theorem \ref{rebab} and the second equality from Theorem 7.11 of \cite{simmonscantor}.
\end{proof}

\begin{cor}\label{rebss}
Let $A$ be a weakly atomic idiom, that is, $\EuScript{W}\EuScript{A}=\EuScript{I}(A)$ then $A$ is semi-atomic, that is, $Gab(id)=soc^{\infty}=tp$. 
\end{cor}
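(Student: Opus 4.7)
The plan is to recognize that the statement $Gab(id)=soc^{\infty}=tp$ is purely a statement about division sets, and reduce it to the inclusion that is already extracted inside the proof of Corollary \ref{rebs}. Unfolding definitions: $\EuScript{D}_{id}=\EuScript{O}$ and $\EuScript{C}rt(\EuScript{O})=\EuScript{S}mp$, so $\EuScript{G}ab(\EuScript{O})=\EuScript{D}vs(\EuScript{S}mp)$. A brief modular-lattice check (using that any atom $s$ above $a$ in a semi-simple interval $[a,b]$ satisfies $[x,x\vee s]\cong[a,s]$ whenever $s\not\leq x$) shows $\EuScript{S}\EuScript{S}p\subseteq\EuScript{D}vs(\EuScript{S}mp)$, so together with Theorem~7.11 of \cite{simmonscantor} one has $\EuScript{D}vs(\EuScript{S}mp)=\EuScript{D}vs(\EuScript{S}\EuScript{S}p)=\EuScript{S}\EuScript{A}$. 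Through the isomorphism of Theorem \ref{00} and Remark \ref{BOYandgab}(3), the goal $Gab(id)=soc^{\infty}=tp$ is therefore equivalent to $\EuScript{S}\EuScript{A}=\EuScript{I}(A)$.

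The main step is then to rerun the final computation in the proof of Corollary \ref{rebs} and harvest its by-product $\EuScript{W}\EuScript{A}\subseteq\EuScript{S}\EuScript{A}$. The relevant ingredients are all available in the ambient idiom: Theorem~7.9 of \cite{simmonscantor} gives $\EuScript{C}mp\cap\EuScript{W}\EuScript{A}=\EuScript{S}\EuScript{S}p$, and Theorem \ref{rebab} supplies a decomposition $\neg j\vee\neg\neg j=tp$ (equivalently $\EuScript{D}vs(\EuScript{D}_{\neg j}\cup\EuScript{D}_{\neg\neg j})=\EuScript{I}(A)$) for a witnessing spectral nucleus $j$, which one can take to be $j=tp$ (whose fixed set $\{1\}$ is trivially complemented). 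Intersecting with the division set $\EuScript{W}\EuScript{A}$ and applying the frame distributivity of $\EuScript{B}(A)$ then yields, exactly as in the closing line of the proof of Corollary \ref{rebs}, $\EuScript{W}\EuScript{A}\subseteq\EuScript{D}vs(\EuScript{S}\EuScript{S}p)=\EuScript{S}\EuScript{A}$.

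The standing hypothesis $\EuScript{W}\EuScript{A}=\EuScript{I}(A)$ now upgrades this to $\EuScript{I}(A)\subseteq\EuScript{S}\EuScript{A}$, and since the reverse inclusion is automatic we conclude $\EuScript{S}\EuScript{A}=\EuScript{I}(A)$. Translating back through Theorem \ref{00} gives $\EuScript{G}ab(\EuScript{O})=\EuScript{D}_{tp}$, i.e.\ $Gab(id)=tp$, and by Remark \ref{BOYandgab}(3) also $soc^{\infty}=Gab(id)=tp$.

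I expect the delicate point to be the intersection-with-$\EuScript{W}\EuScript{A}$ step: one needs to argue that although $\EuScript{D}vs$ is not generally distributive over arbitrary intersections, the intersection with the division set $\EuScript{W}\EuScript{A}$ in the frame $\EuScript{D}(A)$ (where meets are set-theoretic and do distribute over joins) lets the inclusion $(\EuScript{D}_{\neg j}\cup\EuScript{D}_{\neg\neg j})\cap\EuScript{W}\EuScript{A}\subseteq\EuScript{S}\EuScript{S}p$ be promoted to $\EuScript{D}vs(\EuScript{D}_{\neg j}\cup\EuScript{D}_{\neg\neg j})\cap\EuScript{W}\EuScript{A}\subseteq\EuScript{D}vs(\EuScript{S}\EuScript{S}p)$. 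This is precisely the frame-theoretic manoeuvre already executed in Corollary \ref{rebs}, which is why the present result really is a one-line corollary of it once the reduction in the first paragraph is in place.
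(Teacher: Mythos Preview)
Your attempt has a genuine gap at the point where you instantiate $j=tp$. The proof of Corollary~\ref{rebs} needs \emph{both} $j$ and $\neg j$ to be spectral: the two inclusions $\EuScript{D}_{\neg j}\cap\EuScript{W}\EuScript{A}\subseteq\EuScript{S}\EuScript{S}p$ and $\EuScript{D}_{\neg\neg j}\cap\EuScript{W}\EuScript{A}\subseteq\EuScript{S}\EuScript{S}p$ come from Proposition~\ref{lolya}(1) applied to $j$ and to $\neg j$ respectively. You check that $tp\in\EuScript{E}(A)$, but $\neg tp=id$, and $id$ is spectral exactly when $A$ itself is complemented---which is not part of the hypothesis. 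Concretely, with $j=tp$ one has $\EuScript{D}_{\neg\neg j}=\EuScript{D}_{tp}=\EuScript{I}(A)$, so the inclusion $(\EuScript{D}_{\neg j}\cup\EuScript{D}_{\neg\neg j})\cap\EuScript{W}\EuScript{A}\subseteq\EuScript{S}\EuScript{S}p$ you want to ``promote'' reads $\EuScript{W}\EuScript{A}\subseteq\EuScript{S}\EuScript{S}p$, which is false in general and in any case stronger than the conclusion you are after. The decomposition $\neg tp\vee\neg\neg tp=tp$ is of course true but vacuous ($id\vee tp=tp$) and does no work.

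The paper's proof is the one-liner ``This follows directly from Corollary~\ref{rebs}''; the hypothesis that some nucleus $j$ with $j,\neg j\in\EuScript{E}(A)$ exists is tacitly carried over from Corollary~\ref{rebs} (as the subsequent Corollary~\ref{corr}, whose statement reinstates that hypothesis and whose proof cites~\ref{rebss}, confirms). Under that standing assumption the displayed line at the end of the proof of Corollary~\ref{rebs} already gives $\EuScript{W}\EuScript{A}\subseteq\EuScript{S}\EuScript{A}$, and the weak-atomicity hypothesis $\EuScript{W}\EuScript{A}=\EuScript{I}(A)$ finishes. Your first-paragraph reduction to $\EuScript{S}\EuScript{A}=\EuScript{I}(A)$ is fine; it is the manufacture of the spectral witness that fails.
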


\begin{proof}
This follows directly from Corollary \ref{rebs}.
\end{proof}
The following appear as Lemma 2.13 of \cite{arroyo1994some}.
\begin{cor}\label{corr}
Let $R$ be a ring and supuse that $j, \neg j\in\EuScript{E}(\Xi(R))$ then $R$ is left semiartian ring.
\end{cor}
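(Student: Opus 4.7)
The plan is to reduce this corollary directly to the weakly atomic machinery built in Corollaries \ref{rebs} and \ref{rebss}. First I would translate the hypothesis: by Theorem \ref{lol}, any element $j\in\Xi(R)$ is in particular a nucleus on the idiom $\Lambda(R)$, and asserting $j,\neg j\in\EuScript{E}(\Xi(R))$ says that both $j$ and its Heyting pseudocomplement in $N(\Lambda(R))$ are spectral in the sense of Definition \ref{sptid}. This places us exactly in the hypotheses of Corollary \ref{rebs} applied to the idiom $A=\Lambda(R)$.

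Next I would recall that for every ring $R$ the lattice $\Lambda(R)$ of left ideals is compactly generated: each left ideal is the directed supremum of its finitely generated (hence compact) sub-ideals. By Theorem 7.8 of \cite{simmonscantor} every compactly generated idiom is weakly atomic, so $\EuScript{W}\EuScript{A}=\EuScript{I}(\Lambda(R))$.

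Now I would combine the two inputs. Corollary \ref{rebs} (more precisely, the inclusion $\EuScript{W}\EuScript{A}\subseteq\EuScript{S}\EuScript{A}$ extracted from its proof via $\neg j\vee\neg\neg j=tp$ from Corollary \ref{rebt} and the distributivity of $\EuScript{B}(\Lambda(R))$) gives, under the spectral hypothesis on $j$ and $\neg j$, that every weakly atomic interval of $\Lambda(R)$ is semi-atomic. Since $\Lambda(R)$ is itself weakly atomic, this yields $\EuScript{I}(\Lambda(R))=\EuScript{S}\EuScript{A}$; equivalently, $Gab(id)=soc^{\infty}=tp$ in $\Lambda(R)$, which is exactly Corollary \ref{rebss} specialized to $A=\Lambda(R)$.

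Finally I would translate this back to ring-theoretic language. The identity $soc^{\infty}(0)=R$ in $\Lambda(R)$ says that $R$ admits an ascending transfinite socle filtration with top equal to $R$, which is the standard characterization of a left semiartinian ring. No step is genuinely delicate; the only thing to be careful about is pinning down that $\EuScript{E}(\Xi(R))\subseteq\EuScript{E}(\Lambda(R))$ so that the purely idiomatic Corollary \ref{rebs} is applicable, and that compact generation of $\Lambda(R)$ provides the weak atomicity demanded by Corollary \ref{rebss}.
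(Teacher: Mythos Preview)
Your proposal is correct and follows exactly the route the paper intends: the paper's own proof is the single line ``Direct from \ref{rebss}'', and you have simply unpacked that line---compact generation of $\Lambda(R)$ gives weak atomicity, the spectral hypothesis on $j$ and $\neg j$ feeds into Corollary~\ref{rebs}/\ref{rebss} to force $soc^{\infty}=tp$ on $\Lambda(R)$, and this is the Loewy-series characterization of left semiartinian. Your closing caveat about reading $\EuScript{E}(\Xi(R))$ as spectral nuclei on $\Lambda(R)$ (rather than nuclei on the frame $\Xi(R)$ itself) is the correct interpretation of the paper's somewhat loose notation.
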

\begin{proof}
Direct from \ref{rebss}.
\end{proof}

\bibliographystyle{amsalpha}

\bibliography{researchupdate}

\end{document}